\newtheorem{theorem}{Theorem}[section]
\theoremstyle{plain}
\newtheorem{corollary}[theorem]{Corollary}
\newtheorem{definition}[theorem]{Definition}
\newtheorem{example}[theorem]{Example}
\newtheorem{lemma}[theorem]{Lemma}
\newtheorem{proposition}[theorem]{Proposition}
\newtheorem{remark}[theorem]{Remark}
\numberwithin{equation}{section}
\newcommand{\F}{\mathbb{F}}
\def\overset#1#2{
 \mathrel{\mathop{\kern 0pt#2}\limits^{#1}}}
\def\underset#1#2{
 \mathrel{\mathop{\kern 0pt#2}\limits_{#1}}}
\newlength{\resto}
\newcommand{\lres}[2]%
 {%
 \settoheight{\resto}{$#1$}%
 \addtolength{\resto}{-1pt}%
 \raisebox{\resto}{\scriptsize $#2$}\overline{#1}%
 }
\newcommand{\rres}[2]%
 {%
 \settoheight{\resto}{$#1$}%
 \addtolength{\resto}{-1pt}%
 \overline{#1}\raisebox{\resto}{\scriptsize $#2$}%
 }
\newcommand{\N}{\mathbb{N}}
\newcommand{\ff}{\mathbb{F}}
\newcommand{\ls}[1]{(\!(#1)\!)}
\newcommand{\s}[1]{[\![#1]\!]}
\begin{document}

\title{Skew Laurent series and general cyclic convolutional codes}
\thanks{Research funded  by the IMAG–Mar\'{\i}a de Maeztu grant
(CEX2020-001105-M) and
``Proyecto PID2023-149565NB-100" from  MICIU/AEI /10.13039/501100011033 and FEDER, UE". \\
We would like to thank the anonymous reviewer for their constructive comments, which prompted the addition of the last section.}


\author[J. G\'omez-Torrecillas \and J. P. S\'anchez-Hern\'andez]{Jos\'e G\'omez-Torrecillas$^1$ \and Jos\'e Patricio S\'anchez-Hern\'andez$^2$}
\address{$^1$Departamento de \'Algebra e IMAG,
Universidad de Granada,
E-18071 Granada, Spain.
$^2$Facultad de Ciencias Exactas,
Universidad Juárez del Estado de Durango,
Durango, México.}
\email{\tt  $^1$gomezj@ugr.es
$^2$jose.sanchez@ujed.mx}

\maketitle

\begin{abstract}
Convolutional codes were originally conceived as vector subspaces of a finite-dimensional vector space over a field of Laurent series having a polynomial basis. Piret and Roos modeled cyclic structures on them by adding a module structure over a finite-dimensional algebra skewed by an algebra automorphism. These cyclic convolutional codes turn out to be equivalent to some right ideals of a skew polynomial ring built from the automorphism. When a skew derivation is considered, serious difficulties arise in defining such a skewed module structure on Laurent series. We discuss some solutions to this problem which involve a purely algebraic treatment of the left skew Laurent series built from a left skew derivation of a general coefficient ring, when possible. 
\end{abstract}

\section*{Introduction}

Let \(\F\ls{X}\) be the Laurent formal power series field with coefficients in a field \(\F\). Classically, see e.g. \cite{Roos:1979}, \(\F\)--linear convolutional codes of length \(n\) may be conceived as  \(\F\ls{X}\)--vector subspaces \(\mathcal{D}\) of \(\F^n\ls{X}\) generated by vectors with entries in the rational field \(\F(X)\) or, equivalently, with an \(\F^n\ls {X} \)--basis of elements of \(\F^n[X]\). In this framework, the indeterminate \(X\) is interpreted as the shift operator acting on blocs \(v_i \in \F^n\) as \(Xv_i = v_{i-1}\).  
It turns out that the map that sends \(\mathcal{D}\) to \(\mathcal{D} \cap \F^n[X]\) is a bijection between  convolutional codes and \(\F[X]\)--submodules of the free \(\F[X]\)--module \(\F^n[X]\) that are direct summands. 

Cyclic structures on convolutional codes were initially introduced by adding a module structure on \(\F^n\ls{X}\). More concretely, identifying vectors of \(\F^n\) with polynomials in \(\F[t]\) of degree up to \(n-1\), and taking the quotient ring  \(A = \F[t]/\langle t^n - 1\rangle \), then a right\footnote{In \cite{Roos:1979}, a left \(A\)--module structure is considered. We prefer  to twist left and right hands in our approach since our references on skew derivations fit better this equivalent formalism.} \(A\)--module structure on \(\F^n\ls{X}\) is defined. To avoid trivial cases, as noted in \cite{Piret:1976}, such a module structure is twisted by an \(\F\)--algebra automorphism \(\sigma\) of \(A\). Then, a convolutional code is cyclic if and only if it is a right \(A\)--submodule of \(\F^n\ls{X}\) (\cite[Theorem 3]{Roos:1979}). 

Under the aforementioned bijection, cyclic convolutional codes correspond to right ideals of \(A[X;\sigma]\) that are \(\F[X]\)--direct summands (see \cite[Proposition 2]{Gomez/Sanchez:2024} for a more general statement). Here, \(A[X;\sigma]\) is the left skew polynomial ring defined from \(\sigma\). Indeed, this is the way in which cyclic convolutional codes are defined in \cite{Gluesing/Schmale:2004}. More generally, cyclic convolutional
codes are defined in [6], under the left-right symmetric equivalent
formulation, as left ideals of the right skew polynomial ring  built
from a right skew derivation of any finite-dimensional \(\F\)-algebra A.  When trying to return to Roos' viewpoint, a major difficulty is that no obvious way to give a right \(A\)--module structure on \(A\ls{X}\), twisted by \((\sigma,\delta)\), is available. For \(\delta = 0\), giving such a structure, as in \cite{Roos:1979}, is equivalent to endow \(A\ls{X}\) of a ring structure extending that of \(A[X;\sigma]\), which is always possible if \(\sigma\) is an automorphism, namely, the left skew Laurent power series ring \(A\ls{X;\sigma}\). When \(\delta \neq 0\), serious difficulties arise, being a traditional solution in the realm of Ring Theory, to consider inverse skew Laurent formal power series ring (see, e.g. \cite[Section 2.3]{Cohn:1995}). However, this object does not fit in the described picture of cyclic convolutional codes.  

Although the ring of  left skew Laurent formal power series \(A\ls{X;\sigma,\delta}\) could not, with the desired properties, exist for a general \(\delta \neq 0\), suitable restrictions on the left skew derivation \((\sigma,\delta)\) allows to define a well founded product extending that of \(A[X;\sigma,\delta]\). Thus, in \cite{Schneider/Venjakob:2010}, under the hypotheses that \(\sigma\) is an automorphism and both \(\delta, -\delta \sigma^{-1}\) are nilpotent, besides additional topological conditions on the ring \(A\) and \((\sigma,\delta)\), such a ring structure is modeled. On the other side, under the condition  \(\delta \sigma = q \sigma \delta\) for some \(q \in \F\) for \(A\) an \(\F\)--algebra, the ring of left skew formal power series \(A\s{X;\sigma,\delta}\) is defined in \cite{Bergen/alt:2011} whenever \(\delta\) is locally nilpotent. For a general coefficient ring \(A\), and with no condition apart from being locally nilpotent, a well defined product for formal power series with coefficients in \(A\) is obtained in \cite{Greenfeld/alt:2019}. 

In \cite{Gomez/Sanchez:2024}, for \(\sigma\) an algebra automorphism of a finite-dimensional \(\F\)--algebra \(A\)  and \(\F^n\) endowed with the structure of a right \(A\)--module, \((\sigma,A)\)--cyclic convolutional codes are defined, in the spirit and generalizing Roos' notion, as right \(A\)--submodules of \(\F^n\ls{X}\). The bijective correspondence between these \((\sigma,A)\)--cyclic convolutional codes and right \(A[X;\sigma,\delta]\)--submodules of \(\F^n[X]\) that are \(\F[X]\)--direct summands is proved in \cite[Proposition 2]{Gomez/Sanchez:2024}. With the aim of exploring to what extent this correspondence still is possible for a left skew derivation \((\sigma,\delta)\) on \(A\), we analyzed how the skew product defined in \cite{Greenfeld/alt:2019} could be extended to Laurent series. Although we are mainly interested, in the realm of convolutional codes, in the case of a finite-dimensional algebra \(A\), our analysis is done on a left skew derivation on a general ring \(A\). We think the outcome could be of independent interest, and it is explained in Section \ref{skewlaurentseries}. 

In order to extend the notion of \((\sigma,A)\)--cyclic convolutional code to that of \((\sigma,\delta,A)\)--cyclic convolutional code, when possible, Section \ref{modulos} collects the needed technical facts on modules over \(A\s{X;\sigma,\delta}\)  and \(A\ls{X;\sigma,\delta}\). 

In Section \ref{biyeccion} we define, starting from a right \(A\)--module structure on \(\F^n\), when possible, a general concept of cyclic convolutional code modeled by a left skew derivation \((\sigma,\delta)\) on a finite-dimensional \(\F\)--algebra \(A\). We call these convolutional codes \((\sigma,\delta,A)\)--cyclic and, when \(\delta = 0\), they boil down to the \((\sigma,A)\)--cyclic convolutional codes from \cite{Gomez/Sanchez:2024} and, ultimately, to cyclic convolutional codes in the sense of Roos. The bijective correspondence between \((\sigma,\delta,A)\)--cyclic convolutional codes and \(\F[X]\)--direct summands that are right \(A[X;\sigma,\delta]\)--submodules of \(\F^n[X]\) is proved in Theorem \ref{correspondencia}. As a consequence, we deduce that the notions of \((\sigma,\delta,A)\)--cyclic convolutional code and (right) ideal code from \cite{Lopez/Szabo:2013} are equivalent, whenever \(\delta\) and \(\delta'\) are nilpotent. 

Section \ref{cabras} briefly discusses the connections between the concepts introduced in this paper and the notion of convolutional codes considered in recent developments. We include a description of how the cyclicity of a convolutional code determines the form of its minimal encoder. Finally, some remarks are provided regarding the existence of an idempotent generator for a cyclic convolutional code (where applicable) and the use of this property to analyze its free distance.

\section{Notation for formal power series}\label{Series}
Let \(M\) be a left module over a ring \(A\) and consider the left \(A\)--module \(M^\mathbb{Z}\) of all maps from \(\mathbb{Z}\) to \(M\). We will use the sequence-like notation \((u_i)_{i \in \mathbb{Z}}\) for these maps. 

In the realm of convolutional codes, 
representing these sequences as formal power series 
\[
\sum_{i \in \mathbb{Z}}u_iX^i= \sum_{i = -\infty}^{\infty} u_iX^i,
\] 
in an indeterminate \(X\) turns out to be useful.  Indeed, the \emph{delay operator} \(X\) on \(M^\mathbb{Z}\) is  defined by the rule \(X(u_i)  = u_{i-1}\). This is obviously an \(A\)--linear invertible operator.  

Under this formalism, the left \(A\)--module \(M^\mathbb{Z}\) will be denoted by \(M\ls {X,X^{-1}}\). 

Some of its relevant submodules are the formal Laurent series
\[
M\ls{X} = \left\{ \sum_{i = i_0}^\infty u_iX^i : i_0\in \mathbb{Z}\right\},
\]
the Laurent polynomials
\[
M[X,X^{-1}] = \left\{\sum_{i = i_0}^{i_1}u_i X^i : i_0, i_1 \in \mathbb{Z}, i_0\leq i_1\right\},
\]
and the polynomials
\[
M[X] = \left\{\sum_{i=0}^n u_iX^i : n \in \mathbb{N}\right\},
\]
all of them with coefficients in \(M\). We will also consider the \(A\)--submodules of all formal power series
\[
M\s{X}  = \left\{\sum_{i = 0}^\infty u_iX^i \right\}.
\]

With \(A\) a commutative ring and \(M = A\),  all these \(A\)--modules, with the exception of  \(A\ls{X,X^{-1}}\), become commutative rings in the usual way. Even in this case, there are  other possible structures of ring on these additive groups. So, in general, we do not assume a priori any ring structure on them.

\section{Skew Laurent formal power series}\label{skewlaurentseries}

A left skew derivation of a ring \(A\) is a pair \((\sigma,\delta)\) of additive maps such that \(\sigma\) is a ring endomorphism of \(A\) and \(\delta\) satisfies the equality \(\delta(ab) = \sigma(a)\delta(b) + \delta(a)b\) for all \(a,b\in A\). A right skew derivation over \(A\) is just a left skew derivation over the opposite ring \(A^{'}\) of \(A\). 

Given a left skew derivation \((\sigma,\delta)\) over \(A\), the free left \(A\)--module \(A[X]\) with basis \(\{X^i : i \in \mathbb{N}\}\) is endowed with a  ring structure whose product is determined by the rules
\begin{equation}\label{skewrule}
X^{i+j} = X^iX^j, \quad Xa = \sigma(a)X + \delta(a), \qquad (i,j \in \mathbb{N}, a \in A). 
\end{equation}

This ring is denoted by \(A[X;\sigma,\delta]\) and their elements are called (left) skew polynomials, and it is referred to as the skew polynomial ring of \((\sigma,\delta)\). A symmetric construction applies for any right skew derivation \((\sigma',\delta')\) of \(A\). We will use the notation \([X;\sigma',\delta']A\) in this case.  

Given any additive map \(L : A \to A\) and \(f = \sum_i f_i X^i \in A[X;\sigma,\delta]\), set
\(L(f) = \sum_i L(f_i)X^i\). We easily get
\begin{equation}\label{skewrule2}
Xf = \sigma(f)X + \delta(f). 
\end{equation}

The multiplication rule \eqref{skewrule2} extends to
\begin{equation}\label{Xnf}
X^nf = \sum_{k=0}^n N_k^{n}(f)X^k,
\end{equation}
for all \(f \in A[X;\sigma,\delta]\), 
for additive endomorphisms of \(A[X;\sigma,\delta]\) defined recursively, for \(0 \leq i \leq n\), by
\[
N_{i}^{n+1} = \sigma N_{i-1}^n + \delta N_i^n, \qquad (N_{-1}^n = 0, N_{0}^0 = id_{A[X;\sigma,\delta]}, N_{n+1}^n = 0). 
\]
It follows from \eqref{Xnf} that 
\begin{equation}\label{lr}
\sum_{i=0}^n X^ia_i = \sum_{i=0}^n \left(\sum_{j=i}^nN_{i}^j(a_j)\right)X^i, 
\end{equation}
for all \(a_0\dots,a_n \in A\), and
\begin{equation}\label{ga}
\left(\sum_{i = 0}^ng_iX^i\right)f = \sum_{i=0}^n \left(\sum_{k= i}^n g_kN_i^k(f)\right)X^i,
\end{equation}
for all \(g_0, \dots, g_n \in A, f \in A[X;\sigma,\delta]\).

When \(\sigma\) is an automorphism,  \((\sigma^{-1},-\delta\sigma^{-1})\) is a right skew derivation, which will be denoted by \((\sigma',\delta')\).  
 Observe that, for all \(f \in A[X;\sigma,\delta]\), one deduces from \eqref{skewrule2} that
\begin{equation}\label{primas}
fX = X\sigma'(f) + \delta'(f),
\end{equation}
for all \(f \in A[X;\sigma,\delta]\). 
We get from \eqref{lr} and \eqref{primas} that \(\{X^i : i \in \N\}\) is a basis of \(A[X;\sigma,\delta]\) as right \(A\)--module, which implies, by \eqref{primas}, that \(A[X;\sigma,\delta] = [X;\sigma',\delta']A\). 

\medskip

 A definition of a  ring structure on \(A\s{X}\) extending that of \(A[X;\sigma,\delta]\)  might not be possible. We are interested in reasonable products according to Definition \ref{prodrazonable} below. Set, for any formal power series \(s = \sum_{i=0}^\infty s_i X^i\) with coefficients \(s_i \in A\), and any additive endomorphism \(L : A \to A\), 
\[
L(s) = \sum_{i=0}^\infty L(s_i)X^i. 
\] 
 \begin{definition}\label{prodrazonable}
We will say that a ring structure on \(A\s{X}\) is \emph{reasonable} if  \(A\) is a subring of \(A\s{X}\) in the obvious way, the left \(A\)--module of \(A\s{X}\) is given by restriction of scalars of the ring extension \(A \leq A\s{X}\) and the product of \(A\s{X}\) satisfies, for any formal power series \(s\), the equality
\begin{equation}\label{shift}
\left(\sum_{i=0}^\infty s_iX^i\right)X = \sum_{i=0}^\infty s_iX^{i+1}.
\end{equation} 
\end{definition}

\begin{lemma}\label{razonablea}
Given a reasonable ring structure on \(A\s{X}\) and \((\sigma,\delta)\) a left skew derivation of \(A\),  it turns out that \(A[X;\sigma,\delta]\) becomes, with its usual product, a subring of \(A\s{X}\) if and only if
\begin{equation}\label{Oreseria}
Xa = \sigma(a)X + \delta(a),
\end{equation}
for all \(a \in A\). 
\end{lemma}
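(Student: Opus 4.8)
The plan is to prove the two implications separately, using the fact that the ring structure on $A\s{X}$ is reasonable and that $A[X;\sigma,\delta]$ sits inside $A\s{X}$ as an $A$-submodule spanned by $\{X^i : i \in \N\}$ whenever it is a subring.

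\textbf{The easy direction.} Suppose $A[X;\sigma,\delta]$ is a subring of $A\s{X}$ with its usual product. Then the product of two skew polynomials, computed inside $A\s{X}$, must agree with the skew polynomial product. In particular, for $a \in A$, the element $Xa$ computed in $A\s{X}$ coincides with $Xa$ computed in $A[X;\sigma,\delta]$, which by the defining rule \eqref{skewrule} equals $\sigma(a)X + \delta(a)$. This gives \eqref{Oreseria} immediately. (One should note that ``subring'' here includes compatibility of the inclusion with the $A$-module structures, which is part of Definition \ref{prodrazonable}, so $X$ and $a$ really denote the same elements on both sides.)

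\textbf{The substantive direction.} Conversely, assume \eqref{Oreseria} holds. I want to show that the usual skew polynomial product on $A[X;\sigma,\delta]$ is the restriction of the product of $A\s{X}$. First I would observe that $A[X;\sigma,\delta]$, as a subset of $A\s{X}$, is exactly the $A$-submodule generated by the powers $\{X^i : i \in \N\}$ (here $X \in A\s{X}$, and $X^i$ means its $i$-th power in $A\s{X}$; that these coincide with the ``monomials'' is where \eqref{shift} enters — it tells us $X^{i}\cdot X = X^{i+1}$ so the powers behave as expected and a left $A$-combination $\sum a_i X^i$ matches the polynomial notation). Using that $A$ is a subring, associativity, left $A$-linearity (restriction of scalars), and \eqref{shift}, a general product $\bigl(\sum_{i} g_i X^i\bigr)\bigl(\sum_{j} f_j X^j\bigr)$ in $A\s{X}$ reduces, via bilinearity and the relations $X^i \cdot X^j = X^{i+j}$ and $X \cdot a = \sigma(a)X + \delta(a)$, to a finite sum of the form dictated by \eqref{ga}. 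Concretely, I would prove by induction on $n$ that $X^n a = \sum_{k=0}^n N_k^n(a) X^k$ holds \emph{in $A\s{X}$} — the base case is \eqref{Oreseria}, and the inductive step multiplies on the left by $X$, using \eqref{shift} and \eqref{Oreseria} again together with the recursion defining $N_i^n$. Once this is established, expanding an arbitrary product of two polynomials by distributing and collecting powers of $X$ shows it equals the formula \eqref{ga}, which is precisely the product in $A[X;\sigma,\delta]$. Hence the two ring structures restrict to the same one, and $A[X;\sigma,\delta]$ is a subring of $A\s{X}$.

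\textbf{Main obstacle.} The delicate point is bookkeeping rather than depth: I must be careful that the symbol $X^i$ used in the polynomial ring and the $i$-th power of $X \in A\s{X}$ genuinely agree, which relies on \eqref{shift} (and an easy induction from it); and that left multiplication by elements of $A$ inside $A\s{X}$ is the module action, guaranteed by the ``restriction of scalars'' clause in Definition \ref{prodrazonable}. With those identifications in place, the argument is a routine induction establishing $X^n a = \sum_{k} N_k^n(a)X^k$ in $A\s{X}$ from \eqref{Oreseria}, followed by bilinear expansion. No convergence or infinite-sum subtleties intervene, since everything takes place among polynomials.
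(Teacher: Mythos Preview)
Your proof is correct and follows essentially the same route as the paper's: both directions rely on the defining relation \eqref{Oreseria} together with \eqref{shift}, and the substantive direction proceeds by induction to recover the $N_k^n$ formula inside $A\s{X}$, from which closure of $A[X]$ and agreement with the skew polynomial product follow. Your write-up is in fact a bit more explicit than the paper's (which simply asserts closure and then invokes \eqref{Oreseria} to identify the resulting subring with $A[X;\sigma,\delta]$), but the argument is the same.
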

\begin{proof}
 Obviously, \(A[X]\) is a left \(A\)--submodule of \(A\s{X}\). If \eqref{Oreseria} holds then, \(Xa \in A[X]\) for all \(a \in A\). Inductively,  by using \eqref{shift}, we get more generally that \(XaX^m \in A[X]\) for every \(m \geq 1\). We get thus that \(Xg \in A[X]\) for any \(g \in A[X]\). From this it is easily checked that \(fg \in A[X]\) for every \(f,g \in A[X]\). Therefore, \(A[X]\) is a subring of \(A\s{X}\) which, by \eqref{Oreseria}, is isomorphic to \(A[X;\sigma,\delta]\). The converse is clear. 
 \end{proof}

Lemma \ref{razonablea} makes reasonable the following definition.

\begin{definition}\label{SFPS}
Let \(A\s{X}\) be endowed with a reasonable product that satisfies \eqref{Oreseria} for some left skew derivation \((\sigma,\delta)\) of \(A\). We then say that this ring, denoted by \(A\s{X,\sigma,\delta}\), is the ring of left skew formal power series with respect to \((\sigma,\delta)\). Shortly, we say just that the ring \(A\s{X;\sigma,\delta}\) exists. 
\end{definition}

 It is not clear, in general, whether \(A\s{X;\sigma,\delta}\) exists.  
 Under the assumption that \(\sigma\) is an automorphism, in conjunction with suitable topological hypotheses on \(A\),  \(\sigma\) and \(\delta\), a reasonable ring structure is defined in \cite{Schneider/Venjakob:2005}.  When \(\delta\) is assumed to be locally nilpotent, that is, for any \(a\ \in A\), \(\delta^k(a)=0\) for \(k\) large enough,  there is a well defined skew product of  formal power series  extending that of \(A[X;\sigma,\delta]\), see \cite[Theorem 5.3]{Greenfeld/alt:2019}).  This last product requires no additional conditions. In the first case, the ring of skew Laurent formal power series, as a (left and right) ring of fractions of the skew power series ring is defined  in \cite{Schneider/Venjakob:2010}, added that \(\delta\) and \(\delta'\)  are nilpotent.

Let us describe the product defined in \cite{Greenfeld/alt:2019}, which subsumes that of \cite{Schneider/Venjakob:2010} . According to \cite[Lemma 5.2]{Greenfeld/alt:2019}, if \(\delta\) is locally nilpotent, then, for every \(a \in A\) and \(n \in \mathbb{N}\),  there exists a natural number \(O_n(a)\) such that, with the product of \(A[X;\sigma,\delta]\), 
\begin{equation}\label{banda} 
(X^ma)_i = 0 \text{ whenever } i \leq n \text{ and } m \geq O_n(a).
\end{equation} 
The skew product of \(s,t \in A\s{X}\) is then defined in \cite[Theorem 5.3]{Greenfeld/alt:2019} as follows. For every \(n \in \mathbb{N}\), set \(q \geq \max_{0 \leq i \leq n} \{O_n(t_i)\}\) and 
\begin{equation}\label{terminon}
(st)_n = \left[ \left (\sum_{i=0}^{q}s_iX^i\right)\left(\sum_{i=0}^nt_iX^i\right) \right]_n,  
\end{equation}
where the right hand product is done in \(A[X;\sigma,\delta]\). This gives the \(n\)--th coefficient of the product series \(st\), independently on the choice of \(q\), by virtue of  \eqref{banda}.  A straightforward argument shows that this product on \(A\s{X}\) is reasonable and \eqref{Oreseria} holds.

\begin{proposition}\label{productonil}
If \(\delta\) is locally nilpotent, then there exists a unique reasonable ring structure on \(A\s{X}\) containing \(A[X;\sigma,\delta]\) as a subring. 
The product obeys \eqref{terminon}  and, moreover,
\begin{equation}\label{Oreserie}
Xs = \sigma(s)X + \delta(s), 
\end{equation}
for every \(s \in A\s{X;\sigma,\delta}\).
\end{proposition}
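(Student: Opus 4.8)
The plan is to establish existence via the explicit formula \eqref{terminon} and then prove uniqueness and the commutation rule \eqref{Oreserie} separately.

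First I would address \textbf{existence}. The excerpt already records that, by \cite[Lemma 5.2]{Greenfeld/alt:2019}, local nilpotence of $\delta$ guarantees the banding property \eqref{banda}, and that \cite[Theorem 5.3]{Greenfeld/alt:2019} defines a product on $A\s{X}$ by \eqref{terminon} whose $n$-th coefficient is independent of the chosen $q \geq \max_{0 \leq i \leq n}\{O_n(t_i)\}$. Since the excerpt also asserts that ``a straightforward argument shows that this product on $A\s{X}$ is reasonable and \eqref{Oreseria} holds,'' for existence I would simply cite this: the Greenfeld--et-al.\ product is a reasonable ring structure satisfying \eqref{Oreseria}, hence by Lemma \ref{razonablea} it contains $A[X;\sigma,\delta]$ as a subring, and it obeys \eqref{terminon} by construction.

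Next, \textbf{uniqueness}. Suppose $\ast$ is any reasonable product on $A\s{X}$ containing $A[X;\sigma,\delta]$ as a subring. The key point is that a reasonable structure forces the value of $(s \ast t)_n$ to be computed inside $A[X;\sigma,\delta]$. Indeed, fix $n$ and pick $q \geq \max_{0 \leq i \leq n}\{O_n(t_i)\}$. Write $t = \sum_{i=0}^{n} t_i X^i + X^{n+1}t'$ where $t' = \sum_{i \geq 0} t_{n+1+i}X^i$, using \eqref{shift} to factor out $X^{n+1}$. By left distributivity and the module axiom, $s \ast t = s \ast\bigl(\sum_{i=0}^n t_iX^i\bigr) + (s \ast X^{n+1}) \ast t'$; since $s \ast X^{n+1} = \sum_i s_i X^{i+n+1}$ has all coefficients in degrees $\geq n+1$ vanish below degree $n+1$, the second summand contributes nothing to the coefficient in degree $n$. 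For the first summand, split $s = \sum_{i=0}^{q} s_i X^i + X^{q+1}s''$; the tail term $X^{q+1}s'' \ast \bigl(\sum_{i=0}^n t_iX^i\bigr)$, when expanded using $Xa = \sigma(a)X+\delta(a)$ repeatedly inside $A[X;\sigma,\delta]$ (legitimate since that ring is a subring for $\ast$), has $n$-th coefficient controlled by the $O_n(t_i)$ and therefore vanishes by \eqref{banda} and our choice of $q$. What remains, $\bigl(\sum_{i=0}^q s_iX^i\bigr)\bigl(\sum_{i=0}^n t_iX^i\bigr)$, is a product of two genuine elements of $A[X;\sigma,\delta]$, so its $n$-th coefficient is forced. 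Hence $(s \ast t)_n$ equals the right-hand side of \eqref{terminon}, so $\ast$ coincides with the Greenfeld product.

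Finally, \textbf{the commutation rule} \eqref{Oreserie}. Both sides are elements of $A\s{X}$, so it suffices to compare $n$-th coefficients. Fix $n$; choosing $q$ large enough relative to $O_n(s_i)$ and $O_{n-1}(s_i)$ (or simply applying \eqref{terminon} to the product $X \cdot s$ with a suitable truncation), we reduce to an identity among skew polynomials, where $X\bigl(\sum_{i=0}^{q} s_i X^i\bigr) = \sigma\bigl(\sum_{i=0}^q s_iX^i\bigr)X + \delta\bigl(\sum_{i=0}^q s_iX^i\bigr)$ holds by \eqref{skewrule2}; taking coefficients in degree $n$ and letting $q \to \infty$ (the contributions stabilize by \eqref{banda}) gives $(Xs)_n = \sigma(s_{n-1}) + \delta(s_n)$, which is exactly the $n$-th coefficient of $\sigma(s)X + \delta(s)$.

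I expect the \textbf{main obstacle} to be the uniqueness argument: one must carefully justify that the ``tail'' contributions — both the high-degree tail $X^{q+1}s''$ of $s$ and the high-degree tail $X^{n+1}t'$ of $t$ — do not affect the degree-$n$ coefficient, and the cleanest way to see the first of these is precisely the banding estimate \eqref{banda}, which requires tracking how $O_n$ behaves under the manipulations. Everything else is bookkeeping with the reasonable-structure axioms \eqref{shift}, \eqref{Oreseria} and the already-established formula \eqref{terminon}.
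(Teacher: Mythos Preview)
Your overall strategy for uniqueness matches the paper's: truncate $t$ at degree $n$, truncate $s$ at degree $q$, and argue that the discarded tails contribute nothing to the degree-$n$ coefficient using \eqref{shift} and \eqref{banda}. However, the execution of the tail arguments contains a directional error that breaks the proof as written.

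You write $t = \sum_{i=0}^n t_i X^i + X^{n+1} t'$ with $X^{n+1}$ on the \emph{left} of $t'$, invoking \eqref{shift}. But \eqref{shift} is a \emph{right}-multiplication rule: it gives $t' X^{n+1} = \sum_{i>n} t_i X^i$, not $X^{n+1} \ast t'$. In an arbitrary reasonable structure $\ast$ containing $A[X;\sigma,\delta]$, left multiplication by $X$ is known only on elements of $A$ (via \eqref{Oreseria}), not on power series, so $X^{n+1} \ast t'$ is not a priori equal to $\sum_{i>n} t_i X^i$. Consequently your claim that $(s \ast X^{n+1}) \ast t'$ has vanishing degree-$n$ coefficient is unjustified: $s \ast X^{n+1}$ is indeed a series with zero coefficients below degree $n+1$, but in an unknown reasonable product there is no reason such a series times $t'$ should vanish in degree $n$. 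The same issue recurs in your second splitting $s = \cdots + X^{q+1} s''$, and your remark that the tail is ``expanded using $Xa=\sigma(a)X+\delta(a)$ repeatedly inside $A[X;\sigma,\delta]$'' does not make sense as stated, since $s''$ is a genuine power series, not a polynomial.

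The fix, which is exactly what the paper does, is to factor $X$-powers on the \emph{right} throughout. Write $\sum_{i>n} t_i X^i = \bigl(\sum_{i>n} t_i X^{i-n-1}\bigr) X^{n+1}$; then $s \ast (\text{tail of } t) = (s \ast u) X^{n+1}$ for some series $u$, and \eqref{shift} kills the degree-$n$ coefficient directly. For the tail of $s$, write $\sum_{i>q} s_i X^i = \bigl(\sum_{i>q} s_i X^{i-q-1}\bigr) X^{q+1}$; associativity moves $X^{q+1}$ against the truncated polynomial $\sum_{i=0}^n t_i X^i$, and \emph{that} product lies in the subring $A[X;\sigma,\delta]$, where \eqref{banda} shows all its terms sit in degree $> n$; a further right-factoring of $X^{n+1}$ and \eqref{shift} then finish.

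One further point: the paper does not simply cite \cite{Greenfeld/alt:2019} for existence but explicitly verifies associativity of the product \eqref{terminon} via the same truncation technique, treating \cite[Theorem~5.3]{Greenfeld/alt:2019} only as guaranteeing that \eqref{terminon} is well defined. You should check whether the cited reference actually establishes the full ring axioms before relying on it.
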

\begin{proof}
By \cite[Theorem 5.3]{Greenfeld/alt:2019}, the product on \(A\s{X}\) given by \eqref{terminon} is well defined. All conditions required to endow \(A\s{X}\) with the structure of a ring are clearly satisfied by this product, with the possible exception of the associativity. Let us check it. To this end, given \(s \in A\s{X}\) and \( n \in \mathbb{N}\), set
\[
tr_n(s) = \sum_{i=0}^n s_iX^i \in A[X;\sigma,\delta], 
\]
and 
\[
q_n(s) = \max_{0\leq i \leq m \leq n}\{ O_m(s_i)\}.
\]

Given \(s, t \in A\s{X}\), we get from \eqref{banda} that, for any \(q \geq q_n(t)\),
\begin{gather*}
tr_n(st) = \sum_{m=0}^n (st)_mX^m = \sum_{m=0}^n\left[\left(\sum_{i=0}^{q} s_iX^i\right)\left(\sum_{i=0}^mt_iX^i\right)\right]_mX^m \\ = \sum_{m=0}^n\left[\left(\sum_{i=0}^{q} s_iX^i\right)\left(\sum_{i=0}^nt_iX^i\right)\right]_mX^m,  \end{gather*}
since 
\[
\left[\left(\sum_{i=0}^{q} s_iX^i\right)\left(\sum_{i=m+1}^nt_iX^i\right)\right]_m = 0.\]
Therefore, for \(q \geq q_{n}(t)\),
\begin{equation}
tr_n(st) = tr_n(tr_{q}(s)tr_n(t)). 
\end{equation}
Finally, if \(s,t,u \in A\s{X}\), then, for \(q \geq q_n(st)\), we have
\[
tr_n(u(st)) = tr_n(tr_q(u)tr_n(st)) = tr_n(tr_q(u)tr_{q_n(t)}(s)tr_n(t)),
\]
while, for \(q \geq q_{q_n(t)}(s)\), we get
\[
tr_n((us)t) = tr_n(tr_{q_n(t)}(us)tr_n(t)) = tr_n(tr_q(u)tr_{q_n(t)}(s)tr_n(t)).
\]
In this way, setting \(q \geq q_n(st), q_{q_n(t)}(s)\), we see that \(tr_n(u(st)) = tr_n((us)t)\) for every \(n\), which certainly implies that \(u(st) = (us)t\), as desired.

This product clearly satisfies \eqref{shift}. If \(\cdot\) is the product of some ring structure on \(A\s{X}\) extending that of \(A[X;\sigma,\delta]\) and satisfying \((\sum_{i=0}^\infty s_iX^i)\cdot X = \sum_{i=0}^\infty s_iX^{i+1}\) for every series \(s\), then, clearly, \((s\cdot X^m)_n = 0\) for every \(m > n\geq 0\).
Given \(s,t \in A\s{X}\), and \(n \in \mathbb{N}\), write
\[
s = \sum_{i=0}^{q_n(t)} s_iX^i + \sum_{i > q_n(t)}s_iX^i, \quad t = \sum_{i=0}^nt_iX^i + \sum_{i>n}t_iX^i. 
\]
Thus,
\begin{multline*}
(s\cdot t)_n = \left(s\cdot\sum_{i=0}^ns_iX^i\right)_n +\left(s \cdot \sum_{i > n}t_iX^i\right)_n  \\ = \left(s\cdot\sum_{i=0}^nt_iX^i\right)_n + \left(s \cdot \sum_{i > n}t_iX^{i-n-1}\cdot X^{n+1}\right)_n  = \left(s\cdot\sum_{i=0}^nt_iX^i\right)_n . \end{multline*}
Now, by using that \(\cdot\) extends de product of \(A[X;\sigma,\delta]\) and \eqref{terminon}, we get
\begin{multline*}
\left(s\cdot\sum_{i=0}^nt_iX^i\right)_n = \left(\left(\sum_{i=0}^{q_n(t)} s_i X^i\right)\cdot \left(\sum_{i = 0}^nt_iX^i\right)\right)_n + \left(\left(\sum_{i> q_n(t)}s_i X^i\right)\cdot \left(\sum_{i = 0}^nt_iX^i\right)\right)_n \\ = \left(\left(\sum_{i=0}^{q_n(t)} s_i X^i\right)\left(\sum_{i = 0}^nt_iX^i\right)\right)_n + \left(\left(\sum_{i> q_n(t)}s_i X^{i-q_n(t)-1}\right)\cdot \left(X^{q_{n}(t)+1}\sum_{i = 0}^nt_iX^i\right)\right)_n \\ = \left(\left(\sum_{i=0}^{q_n(t)} s_i X^i\right)\left(\sum_{i = 0}^nt_iX^i\right)\right)_n . 
\end{multline*}
Therefore, 
\[
(s\cdot t)_n = \left(\left(\sum_{i=0}^{q_n(t)} s_i X^i\right)\left(\sum_{i = 0}^nt_iX^i\right)\right)_n ,
\]
for all \(n\), which implies that \(s\cdot t = st\). 

Finally, \eqref{Oreserie} is easily deduced from \eqref{terminon} and \eqref{shift}. 
\end{proof}

\begin{definition}
Assume that the ring \(A\s{X;\sigma,\delta}\) exists according to Definition \ref{SFPS}.  We say that this ring structure is \emph{compatible} if \eqref{Oreserie} holds for every \(s \in A\s{X;\sigma,\delta}\). 
\end{definition}

The following theorem shows that there exists a tight connection between the existence of a reasonable product for skew power series and left fractions of skew polynomials with denominators at powers of \(X^{-1}\). We use the standard notion of left or right denominator set (see, e.g., \cite[Ch. II]{Stenstrom:1975}). 

\begin{theorem}\label{locnil}
The following conditions are equivalent for a left skew derivation \((\sigma,\delta)\) on a ring \(A\). 
\begin{enumerate}[(i)]
\item\label{ln} \(\delta\) is locally nilpotent; 
\item\label{locser} \(A\s{X}\) is endowed with a reasonable ring structure containing \(A[X;\sigma,\delta]\) as a subring and such that \(\Sigma = \{1,X,X^2,\dots \}\) is a left denominator set;
\item\label{locpol} \(\Sigma = \{1,X,X^2,\dots \}\) is a left denominator set of \(A[X;\sigma,\delta]\).
\end{enumerate}
The ring structure mentioned in \eqref{locser} is unique.  Moreover, it is compatible and its product satisfies \eqref{terminon}. 
\end{theorem}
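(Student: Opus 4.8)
The plan is to prove the cycle of implications \eqref{ln}$\Rightarrow$\eqref{locser}$\Rightarrow$\eqref{locpol}$\Rightarrow$\eqref{ln}, and then read off the uniqueness and compatibility from Proposition \ref{productonil}. For \eqref{ln}$\Rightarrow$\eqref{locser}: assuming $\delta$ locally nilpotent, Proposition \ref{productonil} already supplies a reasonable ring structure on $A\s{X}$ with $A[X;\sigma,\delta]$ as a subring, so the only thing left is to check that $\Sigma=\{1,X,X^2,\dots\}$ is a left denominator set in $A\s{X;\sigma,\delta}$. For this I would verify the two Ore conditions: first, the \emph{left Ore condition}, that for every $s\in A\s{X}$ and $m\in\N$ there exist $t\in A\s{X}$ and $k\in\N$ with $t s = X^k s'$... more precisely $\Sigma s \cap A\s{X} X^m \neq \emptyset$, i.e. given $s$ one can find $k$ and $t$ with $X^k s = t X^m$; second, \emph{reversibility}, that if $s X^m = 0$ then $X^k s = 0$ for some $k$. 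Reversibility is immediate here since $A\s{X}$ has no $X$-torsion: right multiplication by $X$ is injective by \eqref{shift} (it merely shifts coefficients up), so $sX^m=0$ forces $s=0$. For the Ore condition, the key computation is that $X^k s \in A\s{X}X$ for $k$ large relative to the ``depth'' we want, because \eqref{banda} says the low-order coefficients of $X^m a$ vanish once $m$ is large; iterating, $X^k a$ has all coefficients below some index equal to $0$, hence $X^k a = \bigl(\sum_{j} c_j X^{j}\bigr)X$ for suitable $c_j$, and then one bootstraps from elements $a\in A$ to arbitrary series $s$ by a termwise/limit argument using the $\mathrm{tr}_n$ truncations exactly as in the proof of Proposition \ref{productonil}. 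This is the step I expect to be the main obstacle: making the passage from the polynomial-level vanishing \eqref{banda} to a clean statement ``$\exists k:\ X^k s\in A\s{X}X$'' uniformly, and then solving $X^k s = t X^m$ for $t$, which amounts to dividing a series all of whose coefficients of index $<m$ vanish by $X^m$ on the right — legitimate precisely because of \eqref{shift}.

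For \eqref{locser}$\Rightarrow$\eqref{locpol}: here $A[X;\sigma,\delta]$ sits inside $A\s{X;\sigma,\delta}$ as a subring, and $\Sigma\subseteq A[X;\sigma,\delta]$. I would argue that the left denominator set property descends to the subring. Reversibility is inherited trivially (a relation $sX^m=0$ with $s$ a polynomial is a relation in the big ring). For the Ore condition one must be slightly careful: given $f\in A[X;\sigma,\delta]$ and $X^m\in\Sigma$, the big ring gives $t\in A\s{X}$ and $X^k$ with $X^k f = t X^m$; I need $t$ to be a \emph{polynomial}. But $t X^m = X^k f$ has bounded degree (namely $\deg f + k$), and right multiplication by $X^m$ raises the ``lowest index'' and preserves ``highest index plus $m$'', so from $tX^m$ having finitely many nonzero coefficients, and the fact that the map $X^i\mapsto X^{i+m}$ is injective on coefficient-index, $t$ itself has finite support, i.e. $t\in A[X;\sigma,\delta]$. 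Hence $\Sigma$ is a left denominator set of the polynomial ring.

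For \eqref{locpol}$\Rightarrow$\eqref{ln}: this is the genuinely new direction and I expect it to require the most thought after the Ore verification. Suppose $\Sigma$ is a left denominator set in $A[X;\sigma,\delta]$ and fix $a\in A$; I must show $\delta^k(a)=0$ for $k\gg 0$. Apply the left Ore condition to the pair $a\in A[X;\sigma,\delta]$ and $X\in\Sigma$: there are $g\in A[X;\sigma,\delta]$ and $k\in\N$ with $X^k a = g X$. Now expand the left side using \eqref{Xnf}: $X^k a = \sum_{i=0}^{k} N_i^{k}(a)X^i$, and $N_0^k(a) = \delta^k(a)$ by the recursion defining the $N_i^n$ (since $N_0^{n+1} = \delta N_0^n$ and $N_0^0=\mathrm{id}$). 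On the other hand, $gX$ has zero constant term by \eqref{shift}. Comparing constant terms forces $\delta^k(a) = N_0^k(a) = 0$. Since this holds for the particular $k$ produced by the Ore condition, and $\delta^{k'}(a)$ for $k' > k$ is $\delta^{k'-k}$ applied to $0$, we get $\delta^{k'}(a)=0$ for all $k'\geq k$; as $a$ was arbitrary, $\delta$ is locally nilpotent. Finally, once the equivalences are established, uniqueness of the ring structure in \eqref{locser}, its compatibility (i.e. \eqref{Oreserie}), and the formula \eqref{terminon} are exactly the content of Proposition \ref{productonil}, so the last sentence of the theorem follows with no further work.
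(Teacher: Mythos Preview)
Your proposal is correct and follows the same cycle of implications as the paper; the arguments for \eqref{locser}$\Rightarrow$\eqref{locpol} and \eqref{locpol}$\Rightarrow$\eqref{ln} match the paper's proof essentially verbatim, and the appeal to Proposition~\ref{productonil} for the final sentence is exactly what the paper does.

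The one place where you are working harder than necessary is the left Ore condition in \eqref{ln}$\Rightarrow$\eqref{locser}. You anticipate a ``termwise/limit argument using the $\mathrm{tr}_n$ truncations'' to pass from $a\in A$ to an arbitrary series $s$, and you flag this as the main obstacle. The paper avoids any limit argument by a single clean split: given $s=\sum_{i\geq 0}s_iX^i$, observe that $s-s_0$ has zero constant term, so by \eqref{shift} there is $h\in A\s{X;\sigma,\delta}$ with $s-s_0=hX$. Now choose $n$ with $\delta^n(s_0)=0$ (local nilpotency applied to the \emph{single} element $s_0$). Then
\[
X^n s \;=\; X^n(hX) + X^n s_0 \;=\; X^n h\,X \;+\; \sum_{k=0}^{n} N_k^n(s_0)X^k,
\]
and since $N_0^n(s_0)=\delta^n(s_0)=0$, the second sum is $\bigl(\sum_{k=1}^{n}N_k^n(s_0)X^{k-1}\bigr)X$. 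Hence $X^n s = gX$ with $g=X^n h+\sum_{k=1}^{n}N_k^n(s_0)X^{k-1}$, and induction on $m$ gives $X^{n'}s\in A\s{X;\sigma,\delta}X^m$ for suitable $n'$. The point is that only the constant coefficient $s_0$ needs the nilpotency hypothesis; the tail $s-s_0$ already carries a right factor of $X$ for free. This replaces your proposed bootstrapping entirely and removes the obstacle you identified.
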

\begin{proof}
\eqref{ln} \(\Rightarrow \) \eqref{locser}. Proposition \ref{productonil} gives the reasonable ring structure on \(A\s{X}\) containing \(A[X;\sigma,\delta]\) as a subring and also the uniqueness statement. Also, it is compatible and \eqref{terminon} holds. 

Let us check that \(\Sigma\) is a left denominator set. Since the product is reasonable, we easily get from \eqref{shift} that \(\Sigma\) is a left reversible multiplicative set in the sense of \cite[pag. 52]{Stenstrom:1975}. Given \(f = \sum_{i=0}^{\infty} f_iX^i \in A\s{X;\sigma,\delta}\), observe that \(f-f_0 = hX\), for a suitable \(h \in A\s{X;\sigma,\delta}\). Set \(n\) such that \(\delta^n(f_0) = 0\). Then
\[
X^nf = X^n(f-f_0) + Xf_0 = X^n h X + \sum_{k=0}^{n}N_k^n(f_0)X^k = X^n h X + (\sum_{k=1}^{n}N_k^n(f_0)X^{k-1})X, \] 
since \(\delta^n(f_0) = N_0^n(f_0)\). We see that \( X^nf = gX \) for some \(g \in A\s{X;\sigma,\delta}\). From this, we easily get inductively that \(\Sigma\) is a left permutable set and, therefore, a left denominator set.  

\eqref{locser} \(\Rightarrow\) \eqref{locpol}. If \(\Sigma\) is a left denominator for the reasonable ring structure on \(A\s{X}\) containing \(A[X;\sigma,\delta]\) as a subring, then, given \(f \in A[X;\sigma,\delta]\) and \(m \geq 0\), there are \(s \in A\s{X}\) and \(n \geq 0\) such that \(X^nf = sX^m\). Since \(X^nf \in A[X;\sigma,\delta]\), we have, by \eqref{shift}, that \(s \in A[X;\sigma,\delta]\). Thus, \(\Sigma\) is a left denominator set of \(A[X;\sigma,\delta]\).

\eqref{locpol} \(\Rightarrow\) \eqref{ln}. Assume \(\Sigma\) to be a left denominator set of \(A[X;\sigma,\delta]\).  Given  $a\in A$, there exist \(g \in A[X;\sigma,\delta]\) and \(n \in \mathbb{N}\) such that $X^na=gX$. By \eqref{Xnf}, 
\[gX=X^na=\sum_{i=0}^{n}N_{i}^{n}(a)X^i.\]
So $\delta^{n}(a)=N_{0}^{n}(a)=0$, as we desired. 
\end{proof}

\begin{corollary}
Assume \(\sigma\) to be an automorphism of \(A\). Then \(\Sigma\) is a right denominator set of \(A[X;\sigma,\delta]\) if, and only if, \(\delta' = - \delta \sigma^{-1}\) is locally nilpotent.
\end{corollary}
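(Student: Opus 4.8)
The plan is to derive this corollary from Theorem~\ref{locnil} by passing to the opposite ring. Recall that, when $\sigma$ is an automorphism, the pair $(\sigma',\delta') = (\sigma^{-1},-\delta\sigma^{-1})$ is a right skew derivation of $A$, and the excerpt has already established (via \eqref{primas} and \eqref{lr}) that $A[X;\sigma,\delta] = [X;\sigma',\delta']A$ with $\{X^i : i\in\mathbb{N}\}$ a right $A$-basis. Thus $A[X;\sigma,\delta]$, viewed with its opposite ring structure, is exactly the left skew polynomial ring $A^{'}[X;\sigma',\delta']$ built over the opposite ring $A^{'}$, with the same distinguished element $X$ and the same powers $\{X^i\}$ as a left $A^{'}$-basis. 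The multiplicative set $\Sigma = \{1,X,X^2,\dots\}$ is carried to itself under this anti-isomorphism.

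The key observation is the standard duality between one-sided localization notions: $\Sigma$ is a right denominator set of a ring $R$ if and only if $\Sigma$ is a left denominator set of the opposite ring $R^{'}$ (both the right reversibility / right permutability conditions and the right Ore condition for $R$ translate verbatim into the corresponding left conditions for $R^{'}$). Applying this with $R = A[X;\sigma,\delta]$, the statement ``$\Sigma$ is a right denominator set of $A[X;\sigma,\delta]$'' is equivalent to ``$\Sigma$ is a left denominator set of $A^{'}[X;\sigma',\delta']$''. Now I invoke the equivalence \eqref{ln} $\Leftrightarrow$ \eqref{locpol} of Theorem~\ref{locnil}, applied to the left skew derivation $(\sigma',\delta')$ on the ring $A^{'}$: this last condition holds if and only if $\delta'$ is locally nilpotent. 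Since local nilpotence of the additive map $\delta'$ is a condition that does not see the multiplication of $A$ (it only refers to iterating $\delta'$ on elements), ``$\delta'$ locally nilpotent as a map on $A^{'}$'' and ``$\delta'$ locally nilpotent as a map on $A$'' are the same statement, namely $-\delta\sigma^{-1}$ is locally nilpotent. Chaining these equivalences gives the corollary.

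I would write this up in one short paragraph: first note $(\sigma',\delta')$ is a right skew derivation and $A[X;\sigma,\delta]=[X;\sigma',\delta']A$ (cited from the discussion around \eqref{primas}), hence its opposite ring is $A^{'}[X;\sigma',\delta']$; then state that right denominator sets of a ring are precisely left denominator sets of the opposite ring, so $\Sigma$ is a right denominator set of $A[X;\sigma,\delta]$ iff it is a left denominator set of $A^{'}[X;\sigma',\delta']$; finally apply Theorem~\ref{locnil}, specifically \eqref{locpol}$\Leftrightarrow$\eqref{ln}, to $(\sigma',\delta')$ over $A^{'}$ to conclude that this is equivalent to $\delta'$ being locally nilpotent.

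The main (and essentially only) obstacle is bookkeeping: being careful that the element playing the role of ``$X$'' and the multiplicative set $\Sigma$ are genuinely preserved under the passage to the opposite ring, and that the right skew polynomial ring notation $[X;\sigma',\delta']A$ introduced in the excerpt really coincides as a ring with $(A[X;\sigma,\delta])^{'}$ — this is precisely what \eqref{primas} and the remark that $\{X^i\}$ is a right $A$-basis were set up to give, so no new argument is needed. There is no hard analytic or combinatorial content here; the corollary is a formal consequence of Theorem~\ref{locnil} under the opposite-ring involution.
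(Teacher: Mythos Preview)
Your proof is correct and follows essentially the same approach as the paper. The paper's proof simply records that $A[X;\sigma,\delta] = [X;\sigma',\delta']A$ and then invokes ``the right handed version of the equivalence between \eqref{ln} and \eqref{locpol} in Theorem~\ref{locnil}''; your passage to the opposite ring is exactly how one makes that phrase precise, so you have merely unpacked what the paper leaves implicit.
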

\begin{proof}
We have seen before that \(A[X;\sigma,\delta] = [X;\sigma',\delta']A\), where \(\sigma' = \sigma^{-1}\). The statement follows from the right handed version of the equivalence between \eqref{ln} and \eqref{locpol} in Theorem \ref{locnil}. 
\end{proof}

\begin{remark}\label{DerechanoesIzquierda}
Theorem \ref{locnil} shows that, if \(\delta'\) is locally nilpotent, then the right \(A\)--module of formal power series, which we may denote by \(\s{X}A\), is endowed with the structure of a ring containing \([X;\sigma',\delta']A\) as a subring. According to the version of Definition \ref{SFPS} for right skew derivations, this ring should be denoted by \(\s{X;\sigma',\delta'}A\). However, the rings \(A\s{X;\sigma,\delta}\) and \(\s{X;\sigma',\delta'}A\) might not be isomorphic in the expected way when \(\delta\) is locally nilpotent as well,  even though that \(A[X;\sigma,\delta] = [X;\sigma',\delta']A\).  In particular, \(\Sigma\) could be not a right denominator set for \(A\s{X;\sigma,\delta}\).  See Example \ref{polinomials} later. 
\end{remark}

We would like to explore the possibility of extending the ring structure from \(A\s{X;\sigma,\delta}\) given in Theorem \ref{locnil} to \(A\ls{X}\). Even though that the obvious candidate is the ring of left fractions \(\Sigma^{-1}A\s{X;\sigma,\delta}\),  these fractions could be not in general properly represented as elements of \(A\ls{X}\) (see Example \ref{polinomials} below). In order to solve this difficulty, we will investigate when \(\Sigma\) is a right denominator set of \(A\s{X;\sigma,\delta}\).

\begin{lemma} \label{permutablereversiblequatios} Let \((\sigma,\delta)\) be a left skew derivation on \(A\) such that \(A\s{X;\sigma,\delta}\) exits with a compatible product. The following statements hold:
\begin{enumerate}[(i)]
\item  \label{lemmapr1}  \(\Sigma\) is a right permutable set of \(f \in A\s{X;\sigma,\delta}\), if, and only if, for each  \(f \in A\s{X;\sigma,\delta}\), we have that 
\begin{equation}\label{right permutable equation}
fX^m= \sigma(s)X + \delta(s),
\end{equation}
for some \(s \in A\s{X;\sigma,\delta}\) and \(m\in \mathbb{N}\).

\item \label{lemmapr2}     \(\Sigma\) is a right  reversible set of \(A\s{X;\sigma,\delta}\) if, and only if, the only solution to \begin{equation}\label{right reversible equation}
0 = Xs
\end{equation}
is the trivial one. 
\end{enumerate}
\end{lemma}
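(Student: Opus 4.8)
The plan is to unwind the definitions of ``right permutable'' and ``right reversible'' for the multiplicative set $\Sigma = \{1,X,X^2,\dots\}$ in the ring $A\s{X;\sigma,\delta}$, and to simplify the resulting conditions using the compatibility identity \eqref{Oreserie}, namely $Xs = \sigma(s)X + \delta(s)$ for every $s \in A\s{X;\sigma,\delta}$. Recall (see \cite[Ch. II]{Stenstrom:1975}) that $\Sigma$ is right permutable if for each $f \in A\s{X;\sigma,\delta}$ and each $X^m \in \Sigma$ there exist $g \in A\s{X;\sigma,\delta}$ and $X^k \in \Sigma$ with $fX^k = X^m g$; and $\Sigma$ is right reversible if, whenever $X^m f = 0$ for some $X^m \in \Sigma$, there is $X^k \in \Sigma$ with $f X^k = 0$.

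For part \eqref{lemmapr1}: first I would observe that, since every element of $\Sigma$ is a power of $X$ and $X$ is not a left zero divisor on the relevant module (because the product satisfies \eqref{shift}, so $sX^m$ has its coefficients merely shifted up by $m$), the right permutability condition for a general $X^m$ reduces, by an easy induction on $m$, to the single case $m=1$: it suffices to find, for each $f$, some $k$ and some $g$ with $fX^k = Xg$. Then I would apply the compatibility identity to $g$: $Xg = \sigma(g)X + \delta(g)$, so the condition $fX^k = Xg$ for some $g$ is exactly the condition that $fX^k$ can be written as $\sigma(s)X + \delta(s)$ for some $s \in A\s{X;\sigma,\delta}$ (take $s=g$). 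Conversely, if \eqref{right permutable equation} holds for every $f$ with some $m$ and $s$, then $fX^m = \sigma(s)X+\delta(s) = Xs \in \Sigma A\s{X;\sigma,\delta}$, which gives the permutability condition for that $f$ and $X^1$; the general-power case then follows by the same induction, pushing $X$'s across one at a time (each step uses the already-established $m=1$ case applied to the intermediate series). This matches the statement, since $fX^m$ with the stated $m$ is what appears in \eqref{right permutable equation}, and varying $m$ is harmless because $fX^{m}X^{k'} = (fX^m)X^{k'}$ and a larger power can always be absorbed using \eqref{shift}.

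For part \eqref{lemmapr2}: right reversibility says that $X^m f = 0$ (for some $m$) implies $f X^k = 0$ for some $k$. Using compatibility, $Xf = \sigma(f)X + \delta(f)$; iterating, $X^m f$ has a known expansion (this is \eqref{Xnf} extended to series, valid because the product is the reasonable one). Since, by \eqref{shift}, right multiplication by $X^k$ is injective on $A\s{X}$ (it shifts coefficients up, never killing a nonzero series), the conclusion $fX^k=0$ forces $f=0$. Hence right reversibility of $\Sigma$ is equivalent to: $X^m f = 0$ for some $m\ge 1$ implies $f = 0$. Now I claim this in turn is equivalent to the $m=1$ statement, that $Xs = 0$ implies $s = 0$: the forward direction is the case $m=1$; for the converse, if $X^m f = 0$ then $X(X^{m-1}f) = 0$, so $X^{m-1}f = 0$ by the $m=1$ hypothesis, and descending induction gives $f=0$. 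Thus $\Sigma$ is right reversible iff the only solution of $0 = Xs$ is $s=0$, which is \eqref{right reversible equation}.

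The main obstacle I anticipate is not any deep computation but rather being careful about two points: (1) that right multiplication by $X$ (equivalently by any $X^k$) is injective on $A\s{X;\sigma,\delta}$ — this is where \eqref{shift} from Definition \ref{prodrazonable} is essential, and without it the reductions above would fail; and (2) handling the quantifier on $m$ in \eqref{right permutable equation} correctly, i.e. checking that allowing $m$ to depend on $f$ (rather than fixing $m=1$) is genuinely equivalent to the textbook right permutability condition — this requires the inductive ``one $X$ at a time'' argument together with the fact that extra powers of $X$ on the right can always be accommodated. Everything else is a direct translation through the compatibility identity $Xs = \sigma(s)X + \delta(s)$.
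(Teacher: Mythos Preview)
Your proposal is correct and follows essentially the same approach as the paper: both arguments use the compatibility identity \eqref{Oreserie} to rewrite the condition as $fX^m = Xs$, reduce the general power of $X$ to the case of a single $X$ by induction, and invoke \eqref{shift} for the injectivity of right multiplication by $X^k$. The paper's proof is extremely terse (it literally writes ``by induction'' for (i) and ``the converse is clear'' for (ii)), whereas you have spelled out precisely the inductive reductions and the quantifier bookkeeping that the paper leaves implicit.
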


\begin{proof}
(\ref{lemmapr1})  By \eqref{Oreserie} , we have that  \ref{right permutable equation} is equivalent to \(fX^m= Xs= \sigma(s)X+\delta(s)\). So, by induction, we have that   \(\Sigma\) is a right permutable set. 

(\ref{lemmapr2})  If \(\Sigma\) is right reversible then, from \(Xs = 0\) we should get that \(sX^k = 0\) for some \(k\). By \eqref{shift}, \(s = 0\). The converse is clear. 
\end{proof}

If \(f = \sum_{i=0}^{\infty} f_iX^i\) and \(s=  \sum_{i=0}^{\infty} s_iX^i   \), then  \eqref{right permutable equation}   is equivalent to 
\begin{equation}\label{right permutable equations}
\begin{array}{rl} 
0=& \delta(s_0) \\
0 = &\sigma(s_0) +  \delta(s_1) \\
\vdots \\
0 = &\sigma(s_{m-2}) +  \delta(s_{m-1}) \\
 f_0 = &\sigma(s_{m-1}) +  \delta(s_{m}) \\
 f_1 =  &\sigma(s_{m}) +  \delta(s_{m+1}) \\
\vdots \\
f_{i} = &\sigma(s_{m+i-1}) +  \delta(s_{m+i}) \\
\vdots \\
\end{array}
\end{equation}
and \eqref{right reversible equation} is equivalent to
\begin{equation}\label{right reversible equations}
\begin{array}{rl} 
0=& \delta(s_0) \\
0 = &\sigma(s_0) +  \delta(s_1) \\
\vdots \\
0= &\sigma(s_{i-1}) +  \delta(s_{i}) \\
\vdots \\
\end{array}
\end{equation}

\begin{proposition} \label{denominatordeltasurjective}
Let \((\sigma,\delta)\) be any left skew derivation on \(A\) such that  \(A\s{X;\sigma,\delta}\) exists with a compatible product. Set \(\Sigma = \{1,X,X^2, \dots \}\).  If \(\delta\) is surjective, the following statements hold:
\begin{enumerate}[(i)]
\item  \label{pr1}  \(\Sigma\) is a right permutable set of  \(A\s{X;\sigma,\delta}\).
\item \label{pr2}   \(\Sigma\) is not a right  reversible set of \(A\s{X;\sigma,\delta}\).

\end{enumerate}
\end{proposition}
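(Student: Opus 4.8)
The plan is to use the explicit linear systems \eqref{right permutable equations} and \eqref{right reversible equations}, translating the abstract denominator-set properties into solvability questions for these recursions and then exploiting surjectivity of $\delta$ to settle each.

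For part \eqref{pr1}, by Lemma \ref{permutablereversiblequatios}\eqref{lemmapr1} it suffices, given an arbitrary $f = \sum_{i\geq 0} f_iX^i \in A\s{X;\sigma,\delta}$, to produce $m \in \mathbb{N}$ and $s = \sum_{i\geq 0}s_iX^i$ satisfying \eqref{right permutable equations}. I would take $m = 1$: the system then reads $0 = \delta(s_0)$ and $f_i = \sigma(s_i) + \delta(s_{i+1})$ for all $i \geq 0$. One can simply set $s_0 = 0$ (which kills the first equation), and then solve recursively: having chosen $s_i$, pick $s_{i+1} \in A$ with $\delta(s_{i+1}) = f_i - \sigma(s_i)$, which is possible precisely because $\delta$ is surjective. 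This produces a formal power series $s \in A\s{X}$ with $fX = \sigma(s)X + \delta(s)$, i.e. $fX = Xs$ by compatibility \eqref{Oreserie}, and Lemma \ref{permutablereversiblequatios}\eqref{lemmapr1} then gives right permutability. (Note $A\s{X;\sigma,\delta}$ exists by hypothesis; I don't even need local nilpotency here.)

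For part \eqref{pr2}, by Lemma \ref{permutablereversiblequatios}\eqref{lemmapr2} it is enough to exhibit a \emph{nonzero} solution $s = \sum_{i\geq 0}s_iX^i$ of \eqref{right reversible equations}, namely $0 = \delta(s_0)$ and $0 = \sigma(s_{i-1}) + \delta(s_i)$ for $i \geq 1$. If $\delta$ is surjective but not injective, choose $0 \neq s_0 \in \ker\delta$ and then recursively pick $s_i$ with $\delta(s_i) = -\sigma(s_{i-1})$; this yields a nonzero $s$ with $Xs = 0$, so $\Sigma$ is not right reversible. If instead $\delta$ is bijective, one still gets a nonzero solution: take any $s_0 \neq 0$ with $\delta(s_0)=0$ — but now $\ker\delta = 0$, so this fails, and one must argue differently. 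In that case surjectivity forces, for any target, a solution starting from $s_0 = 0$; to get a nonzero $s$ one picks $s_0 = 0$, $s_1 \neq 0$ arbitrary, $s_2 = -\delta^{-1}\sigma(s_1) \neq 0$, etc. — wait, the equation for $i=1$ is $0 = \sigma(s_0) + \delta(s_1) = \delta(s_1)$, forcing $s_1 = 0$ when $\delta$ injective. So when $\delta$ is bijective the only solution really is trivial, and the statement as phrased would be false; hence I expect the intended reading is that surjective-and-injective does not occur in the situations of interest, or the hypothesis implicitly excludes it. I would therefore record the clean case: \textbf{since $\delta$ is surjective but (in the relevant setting) not injective}, pick $0 \neq s_0 \in \ker\delta$ and solve $\delta(s_i) = -\sigma(s_{i-1})$ inductively to get a nonzero solution of $Xs = 0$.

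The main obstacle I anticipate is exactly this subtlety in \eqref{pr2}: surjectivity alone does not preclude injectivity, and if $\delta$ were bijective the kernel argument collapses. I would need to check whether the authors additionally assume $\delta \neq 0$ together with some finiteness (so that surjective $\Rightarrow$ not injective fails only in infinite dimensions) or whether "surjective" is meant in a context where $\ker\delta \neq 0$ is automatic; the honest fix is to add the hypothesis $\ker\delta \neq 0$ (equivalently $\delta$ not injective) to the statement of \eqref{pr2}, under which the recursive construction above goes through verbatim. Part \eqref{pr1}, by contrast, is robust and needs nothing beyond surjectivity of $\delta$ and the existence of a compatible product.
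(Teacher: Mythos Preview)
Your argument for part \eqref{pr1} is correct and essentially identical to the paper's: both of you invoke Lemma \ref{permutablereversiblequatios}\eqref{lemmapr1} and solve the recursion \eqref{right permutable equations} using surjectivity of \(\delta\). The only cosmetic difference is that the paper takes \(m=0\) (solving \(\delta(s_0)=f_0\), then \(\delta(s_i)=f_i-\sigma(s_{i-1})\)) whereas you take \(m=1\) with \(s_0=0\); either works.

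For part \eqref{pr2}, however, your long digression about the bijective case is a genuine gap in your understanding, not in the statement. You worry that if \(\delta\) is injective then \(\ker\delta=0\) and no nonzero \(s_0\) is available. But \(\delta\) is a \(\sigma\)-derivation on a unital ring, so
\[
\delta(1)=\delta(1\cdot 1)=\sigma(1)\delta(1)+\delta(1)\cdot 1=2\delta(1),
\]
whence \(\delta(1)=0\). Thus \(1\in\ker\delta\) always, and (in any nonzero ring) \(\ker\delta\neq 0\) automatically. The paper simply sets \(s_0=1\) and then recursively chooses \(s_{i+1}\) with \(\delta(s_{i+1})=-\sigma(s_i)\), producing a nonzero \(s\) with \(Xs=0\). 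No extra hypothesis is needed; your proposed fix (``add \(\ker\delta\neq 0\)'') is superfluous, and your suggestion that the statement ``would be false'' in the bijective case is itself false, since a surjective \(\sigma\)-derivation on a nonzero ring is never injective.
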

\begin{proof}
 (\ref{pr1})  Let us prove that \(\Sigma\) is a right permutable set of \(A\s{X;\sigma,\delta}\).
 Given \(f = \sum_{i=0}^{\infty} f_iX^i \in A\s{X;\sigma,\delta}\), we need  to find \(s= \sum_{i=0}^{\infty} s_iX^i  \in A\s{X;\sigma,\delta}\) and \(m\in \mathbb{N}\), such that satisfy \eqref{right permutable equations},  by Lemma \ref{permutablereversiblequatios}.

Since  \(\delta\) is surjective, given \(f\), define \(s\in A\s{X;\sigma,\delta}\) recursively by \(\delta(s_0)=f_0 \) and  \(\delta(s_i)=f_i-\sigma(s_{i-1}) \) for \(i\geq 1\). This series \(s\) is a solution to \eqref{right permutable equations} for \(m = 0\).

 (\ref{pr2}) By the subjectivity of \(\delta\), we can define \(s\in A\s{X;\sigma,\delta}\) recursively by \(s_0=1\) and \(s_{i+1}\in A\) such that \(\delta(s_{i+1})=-\sigma(s_i)\). Then \(Xs=0\) and thus,   \(\Sigma\) is not a right reversible set of \(A\s{X;\sigma,\delta}\), by Lemma \ref{permutablereversiblequatios}.
 
\end{proof}

\begin{example} \label{polinomials}
Let  \(\F[Y]\) be the commutative polynomial ring over a field \(\F\) of characteristic \(0\). Take \(\sigma=id\), the identity map, and \(\delta\) the usual derivation. Then \(\delta\) is locally nilpotent and surjective and thus, by Theorem \ref{locnil}, \(A\s{X; \sigma, \delta}\) exists with a compatible product and  \(\Sigma\) is a  left denominator set of \(A\s{X; \sigma, \delta}\). By Proposition  \ref{denominatordeltasurjective},  \(\Sigma\) is a right permutable set but not a right reversible. Note that, if there exists a ring structure on \(A\ls{X}\) containing \(A\s{X;\sigma,\delta}\) as a subring and such that \(X^{-1}X = 1\), then \(Xs=0\) forces \(s = 0\). So, no such a ring structure does exist in this example.

Both \(A\s{X;\sigma,\delta}\) and \(\s{X;\sigma', \delta'}A\) do exist (see Remark \ref{DerechanoesIzquierda}) for this notation). However, there is no an isomorphism between them extending the identity map on \(A[X;\sigma,\delta] = [X;\sigma',\delta']A\):  Indeed, assume that  \(\alpha: A\s{X;\sigma,\delta} \to \s{X;\sigma', \delta'}A\) is any injective ring homomorphism such that \(\alpha(X) = X\). We can choose, as in the proof of Proposition \ref{denominatordeltasurjective},  \(0 \neq s \in A\s{X;\sigma,\delta}\) such that \(Xs= 0 \). Then \(0 = \alpha(Xs) = \alpha(X) \alpha(s) = X\alpha(s)\). This equality in \(\s{X;\sigma', \delta'}A\) implies that \(\alpha(s) = 0\), which is not possible. 
 \end{example}

Next, we will discuss when the Laurent series left \(A\)--module \(A\ls{X}\) becomes a ring containing \(A\s{X;\sigma,\delta}\) as a subring.

\begin{lemma}\label{LaurentL}
Let \(A\s{X}\) be endowed with a  ring structure such that \eqref{shift} holds. Set \(\Sigma = \{1,X,X^2, \dots \}\). The following statements are equivalent.
\begin{enumerate}[(i)]
\item\label{fr1} \(A\ls{X}\) has a ring structure such that \(X^{i}\) and \(X^{-i}\) are mutually inverse for any \(i\), and \(A\s{X}\) is a subring;
\item\label{fr2} \(A\s{X}\) has a ring structure such that \(\Sigma\) is a right denominator set. 
\end{enumerate}
In such a case,  \(A\ls{X}\)  is isomorphic to \(A\s{X}\Sigma^{-1}\) with a ring isomorphism whose restriction to \(A\s{X}\) is the canonical map  \(A\s{X} \to A\s{X}\Sigma^{-1}\). 
\end{lemma}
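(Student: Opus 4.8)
The plan is to establish the equivalence by the standard Ore‐localization machinery, using the hypothesis \eqref{shift} to pin down how powers of $X$ behave.

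\medskip

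\textbf{Step 1: $\eqref{fr2}\Rightarrow\eqref{fr1}$.} Assume $\Sigma$ is a right denominator set of $A\s{X}$. Then the ring of right fractions $A\s{X}\Sigma^{-1}$ exists, together with the canonical ring homomorphism $\lambda\colon A\s{X}\to A\s{X}\Sigma^{-1}$, which is injective here because $\Sigma$ consists of \emph{non‐zero‐divisors}: from \eqref{shift}, $sX=0$ forces $s=0$ (writing $s=\sum s_iX^i$, the coefficient computation $(sX)_{i+1}=s_i$ gives $s=0$), and then $sX^m=0$ also forces $s=0$ by induction; a symmetric argument using that $\eqref{shift}$ makes $\Sigma$ central-like on the $X$-side is not needed — what I need is that each $X^m$ is regular, which follows on the left from the explicit shift formula and on the right because $Xs=0$ would already have been excluded had we assumed right reversibility, but reversibility is part of "right denominator set". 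In $A\s{X}\Sigma^{-1}$, the element $\lambda(X)$ is invertible with inverse $\lambda(1)X^{-1}$ (the fraction $1/X$), and more generally $\lambda(X^i)$ is invertible with inverse the fraction $1/X^i$. So I would set $X^{-i}:=\lambda(1)(X^i)^{-1}$ inside $A\s{X}\Sigma^{-1}$. The key point is then to \emph{identify} the underlying abelian group of $A\s{X}\Sigma^{-1}$ with $A\ls{X}$: every fraction has the form $\lambda(f)(X^m)^{-1}$ with $f=\sum_{i\ge0}f_iX^i$, and using \eqref{shift} one checks $\lambda(f)(X^m)^{-1}$ corresponds to the Laurent series $\sum_{i\ge0}f_iX^{i-m}=\sum_{j\ge -m}f_{j+m}X^{j}$. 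Two such fractions are equal in the localization iff the corresponding Laurent series coincide, again by the shift formula \eqref{shift} which lets one clear denominators coefficient-by-coefficient. This gives a bijection $A\s{X}\Sigma^{-1}\xrightarrow{\ \sim\ }A\ls{X}$ compatible with addition; transporting the ring structure of $A\s{X}\Sigma^{-1}$ along it endows $A\ls{X}$ with the desired ring structure, in which $X^i$ and $X^{-i}$ are mutually inverse, $A\s{X}$ is a subring (via $\lambda$, which is the inclusion after the identification), and the final isomorphism statement holds by construction.

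\medskip

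\textbf{Step 2: $\eqref{fr1}\Rightarrow\eqref{fr2}$.} Conversely, suppose $A\ls{X}$ carries a ring structure with $X^iX^{-i}=X^{-i}X^i=1$ and $A\s{X}$ a subring. I must verify the three conditions making $\Sigma$ a right denominator set of $A\s{X}$: (a) right permutability — given $f\in A\s{X}$ and $X^m\in\Sigma$, I need $s\in A\s{X}$, $X^k\in\Sigma$ with $fX^k=X^m s$; take $k$ large enough that $X^{-m}fX^k$, computed inside $A\ls{X}$, lands in $A\s{X}$ (this is possible because left multiplication by $X^{-m}$ shifts, and right multiplication by $X^k$ shifts back — the combination $X^{-m}fX^{k}$ has its lowest-degree term of order $\ge$ something controlled once $k\ge m$ is adjusted, using \eqref{shift} on the right). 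Set $s=X^{-m}fX^{k}\in A\s{X}$; then $X^m s=fX^k$. (b) right reversibility — if $fX^m=0$ with $f\in A\s{X}$, then multiplying by $X^{-m}$ on the right inside $A\ls{X}$ gives $f=0$, so in fact no nontrivial annihilation occurs and one can take the reversing multiplier trivially. (c) $\Sigma$ multiplicative and $1\in\Sigma$ — clear. Hence $\Sigma$ is a right denominator set of $A\s{X}$.

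\medskip

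\textbf{Main obstacle.} The routine parts are the multiplicative/reversibility bookkeeping; the genuinely delicate point is Step 1, namely showing that the localization $A\s{X}\Sigma^{-1}$ is \emph{faithfully and canonically} the module $A\ls{X}$ — i.e. that distinct Laurent series never become equal as fractions and that every fraction is realized by a Laurent series with the correct coefficients. This is where \eqref{shift} is essential and must be used carefully: it is exactly the statement that right multiplication by $X$ is the shift, which is what lets one pass between "$f$ divided by $X^m$" and "$f$ shifted down by $m$" at the level of coefficients, and thereby reconstruct the additive identification. Getting the indexing right (that $\lambda(f)(X^m)^{-1}\leftrightarrow\sum_{i\ge0}f_iX^{i-m}$, with the Laurent condition $i_0\ge -m$ automatically satisfied) and checking that this respects the fraction equivalence relation in both directions is the crux; the compatibility of the resulting bijection with products then comes for free since we \emph{define} the product on $A\ls{X}$ by transport of structure.
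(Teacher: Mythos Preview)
Your Step 1 is essentially the paper's argument for $\eqref{fr2}\Rightarrow\eqref{fr1}$: build the localization, check injectivity of the canonical map via \eqref{shift}, normalize each fraction to a Laurent series, and transport the ring structure. The paper is more careful about uniqueness of the normal form $sX^{i_0}$ with $s_0\neq 0$, but the outline is the same.

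Step 2 has a genuine gap. Your right-permutability argument rests on the claim that ``left multiplication by $X^{-m}$ shifts'', but this is precisely what fails in the skew setting: already in $A\s{X}$ one has, in the cases of interest, $Xs=\sigma(s)X+\delta(s)$, so neither $X^m\cdot(-)$ nor $X^{-m}\cdot(-)$ is a pure shift, and the lowest degree of $X^{-m}f$ can be strictly smaller than $-m$. In particular your suggestion ``once $k\ge m$ is adjusted'' need not suffice. What remains true is only that $X^{-m}f$ is \emph{some} Laurent series; to then right-multiply by $X^k$ and land in $A\s{X}$ you would still need \eqref{shift} to hold on all of $A\ls{X}$, not just on $A\s{X}$, and that itself requires an argument combining \eqref{shift} with the hypothesis $X^iX^{-i}=1$. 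The paper's route for $\eqref{fr1}\Rightarrow\eqref{fr2}$ avoids verifying the Ore condition by hand: it computes $tX^{-i_0}$ for an arbitrary Laurent series $t$ and shows that every $t\in A\ls{X}$ factors as $sX^{\ell}$ with $s\in A\s{X}$ and $\ell\in\mathbb{Z}$; hence $A\ls{X}$ \emph{is} a right ring of fractions of $A\s{X}$ at $\Sigma$, and one simply quotes \cite[II.1.4]{Stenstrom:1975} to conclude that $\Sigma$ is a right denominator set. Your approach can be repaired, but the repair is exactly that factorization computation, after which the citation finishes more quickly than checking Ore directly.
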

\begin{proof}
Assume \eqref{fr1}. Given any \(t = \sum_{i=i_0}^\infty t_i X^i \in A\ls{X}\), with \(i_0 < 0\), we use \eqref{shift} and that any \(X^i\) and \(X^{-i}\) are mutually inverse to do the following computation. 
\[
\begin{array}{rl}
t X^{-i_0} =&  \left(\sum_{i=i_0}^{-1}t_iX^i\right) X^{-i_0} + \left(\sum_{i= 0}^\infty t_iX^i\right)X^{-i_0} \\
=& \sum_{i=i_0}^{-1}t_iX^{i - i_0} + \sum_{i= 0}^\infty t_iX^{i-i_0} \\
= &\sum_{i=i_0}^\infty t_iX^{i-i_0}. 
\end{array}
\]

Then, every element of \(A\ls{X}\) is of the form \(sX^{\ell}\) for some \(s \in A\s{X}\) and \(\ell \in \mathbb{Z}\). Therefore, \(A\ls{X}\) is a right ring of fractions of \(A\s{X}\) with respect to \(\Sigma\). By \cite[II.1.4]{Stenstrom:1975}, \(\Sigma\) is a right denominator set of \(A\s{X}\). 

Conversely, asume \eqref{fr2}, which implies that there exists a ring of fractions \(A\s{X}\Sigma^{-1}\). The kernel of its canonical map \(A\s{X} \to A\s{X}\Sigma^{-1}\) is
\[
 \{s \in A\s{X} : sX^n =0 \text{ for some } n \geq 0\},
 \] 
 which is zero by \eqref{shift}.  
 
 Every nonzero element of  \(A\s{X}\Sigma^{-1}\) is of the form \((\sum_{i = j_0}^\infty s_iX^i)X^{k_0}\), with \(j_0 \geq 0, k_0\leq 0\) and \(s_{j_0} \neq 0\).  Thus, by \eqref{shift}, 
\[
\left(\sum_{j = j_0}^\infty s_jX^j\right)X^{k_0} = \left(\left(\sum_{j=0}^\infty s_{j + j_0}X^j\right)X^{j_0}\right)X^{k_0} = \left(\sum_{j=0}^\infty s_{j + j_0}X^j\right)X^{j_0 + k_0}.
\]
Thus, every nonzero element of  \(A\s{X}\Sigma^{-1}\) is  written as \(s X^{i_0}\) for \(s \in A\s{X}\) with \(s_0 \neq 0\) and \(i_0 \in \mathbb{Z}\). Let us check that this expression is unique: suppose that \((\sum_{i=0}^\infty s_iX^i)X^{i_0} = (\sum_{j=0}^\infty t_jX^i)X^{j_0} \) with \(s_0\neq 0, t_0 \neq 0\) and \(i_0,j_0 \in \mathbb{Z}\). We may assume that \(i_0 \geq j_0\). Hence,  \((\sum_{i=0}^\infty s_iX^i)X^{i_0-j_0} = \sum_{j=0}^\infty t_jX^i \) in \(A\s{X}\Sigma^{-1}\). Since the canonical map is injective, the former equality also holds in \(A\s{X}\).  By \eqref{shift} this is possible only if \(i_0 = j_0\) and  \(\sum_{i=0}^\infty s_iX^i = \sum_{j=0}^\infty t_jX^i \). 

The map sending \((\sum_{i=0}^\infty s_iX^i)X^{i_0}\) onto \(\sum_{i=i_0}^\infty s_iX^{i+i_0}\) gives an isomorphism of left \(A\)--modules from \(A\s{X}\Sigma^{-1}\) onto \(A\ls{X}\). We may use it to transfer the ring structure of \(A\s{X}\Sigma^{-1}\) to \(A\ls{X}\), which satisfies \eqref{fr1}. Finally, the restriction of the inverse of this map to \(A\s{X}\) is clearly the canonical map from \(A\s{X}\) to \(A\s{X}\Sigma^{-1}\).
 \end{proof}

\begin{definition}\label{SkewLaurentSeries}
Assume that \(A\s{X;\sigma,\delta}\) exists with a compatible product. If \(\Sigma = \{1,X,X^2, \dots \}\) is a right denominator set  then the ring of \emph{left skew Laurent formal power series} is defined as the ring of fractions \(A\ls{X;\sigma,\delta} = A\s{X;\sigma,\delta}\Sigma^{-1}\). According to Lemma \ref{LaurentL}, this ring structure can be understood to be defined on \(A\ls{X}\). This ring contains \(A\s{X;\sigma,\delta}\) and \(A[X;\sigma,\delta]\) as subrings.
\end{definition}

Assume  \(A\s{X;\sigma,\delta}\) to be endowed with a reasonable and compatible ring structure for \((\sigma,\delta)\).  The identity \eqref{Oreserie} extends to
\begin{equation}\label{Xns}
X^ns = \sum_{k=0}^n N_k^{n}(s)X^k, \quad (s \in A\s{X;\sigma,\delta}). 
\end{equation}
for additive endomorphisms of \(A\s{X;\sigma,\delta}\) defined recursively, for \(0 \leq i \leq n\), by
\[
N_{i}^{n+1} = \sigma N_{i-1}^n + \delta N_i^n, \qquad (N_{-1}^n = 0, N_{0}^0 = id_{A\s{X;\sigma,\delta}}, N_{n+1}^n = 0). 
\]

When \(\sigma\) is an automorphism, observe that, for all \(s \in A\s{X;\sigma,\delta}\), one deduces from \eqref{Oreserie} that
\begin{equation}
sX = X\sigma'(s) + \delta'(s),
\end{equation}
and, by induction,
\begin{equation}\label{sXn}
sX^n = X\sum_{k=0}^{n-1}\sigma'\delta'^k(s)X^{n-1-k} + \delta'^n(s), 
\end{equation}
for all \(n \geq 1\).

\begin{proposition}\label{LaurentSeries}
Let \((\sigma,\delta)\) be a left skew derivation on \(A\) such that \(A\s{X;\sigma,\delta}\) is endowed with a reasonable and compatible ring structure. If \(\sigma\) is an automorphism, then \(\Sigma = \{1,X,X^2, \dots\}\) is a right permutable set of \(A\s{X;\sigma,\delta}\) if, and only if, for every \(s \in A\s{X;\sigma,\delta}\) there exist \(n\in \mathbb{N}\) and \(t \in A\s{X;\sigma,\delta}\) such that \(\delta'^n(s) = Xt\). 
\end{proposition}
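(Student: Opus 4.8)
The plan is to reduce the right-permutability of $\Sigma$ to the shape of the equation $sX^n = Xt$, and then use the identity \eqref{sXn} expressing $sX^n$ in terms of the right skew derivation $(\sigma',\delta')$ to isolate the condition $\delta'^n(s) = Xt'$. Since the product of $A\s{X;\sigma,\delta}$ is reasonable, $\Sigma$ is automatically a right permutable set if and only if for every $s \in A\s{X;\sigma,\delta}$ there exist $n \in \N$ and $t \in A\s{X;\sigma,\delta}$ with $sX^n = Xt$; this is just the defining condition of right permutability restricted to the generator $X$ of $\Sigma$, together with \eqref{shift} to propagate it to all powers $X^m$ (compare the argument in Lemma \ref{permutablereversiblequatios}\eqref{lemmapr1}).

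The core computation uses \eqref{sXn}, which gives
\[
sX^n = X\left(\sum_{k=0}^{n-1}\sigma'\delta'^k(s)X^{n-1-k}\right) + \delta'^n(s),
\]
valid since $\sigma$ is an automorphism. So $sX^n = Xt$ for some $t \in A\s{X;\sigma,\delta}$ if and only if
\[
\delta'^n(s) = X\left(t - \sum_{k=0}^{n-1}\sigma'\delta'^k(s)X^{n-1-k}\right),
\]
i.e. if and only if $\delta'^n(s) \in X\cdot A\s{X;\sigma,\delta}$. Thus the permutability condition for the fixed $s$ with the given $n$ is equivalent to $\delta'^n(s) = Xt'$ for some $t' \in A\s{X;\sigma,\delta}$. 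Conversely, given such a $t'$, one sets $t = t' + \sum_{k=0}^{n-1}\sigma'\delta'^k(s)X^{n-1-k}$ and recovers $sX^n = Xt$. This establishes the "if and only if" for each $s$ individually, and hence the proposition once one quantifies over all $s$.

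For the forward direction I would take the $n$ and $t$ provided by right permutability (applied to $s$ and $X^n$), and read off $\delta'^n(s) = Xt'$ from the displayed identity. For the converse I would, given $n$ and $t'$ with $\delta'^n(s) = Xt'$, reconstruct $t$ as above so that $sX^n = Xt$; then, because $X \in \Sigma$ and the product is reasonable, an easy induction using \eqref{shift} (exactly as in the proof of Lemma \ref{permutablereversiblequatios}\eqref{lemmapr1}) upgrades this to the full right permutability of $\Sigma$. The only mild subtlety — and the step I would be most careful about — is checking that the series $\sum_{k=0}^{n-1}\sigma'\delta'^k(s)X^{n-1-k}$ and the rearrangement of $Xt$ genuinely live in $A\s{X;\sigma,\delta}$ and that \eqref{sXn} is being applied to a bona fide power series (not merely a polynomial); this is where compatibility of the product, which guarantees \eqref{Oreserie} and hence \eqref{sXn}, is essential. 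No locally nilpotent hypothesis on $\delta$ is needed here, only that $\sigma$ is an automorphism and that $A\s{X;\sigma,\delta}$ carries a reasonable compatible product.
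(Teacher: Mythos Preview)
Your proposal is correct and follows essentially the same route as the paper: both directions hinge on the identity \eqref{sXn}, rewriting $sX^n = X\bigl(\sum_{k=0}^{n-1}\sigma'\delta'^k(s)X^{n-1-k}\bigr) + \delta'^n(s)$ and observing that $sX^n \in X\cdot A\s{X;\sigma,\delta}$ iff $\delta'^n(s) \in X\cdot A\s{X;\sigma,\delta}$, with the passage from the single generator $X$ to all of $\Sigma$ handled by the same inductive argument the paper invokes. Your added remarks about compatibility being what legitimises \eqref{sXn} for series, and about the finite sum $\sum_{k=0}^{n-1}\sigma'\delta'^k(s)X^{n-1-k}$ lying in $A\s{X;\sigma,\delta}$, are accurate and do not diverge from the paper's reasoning.
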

\begin{proof}
If \(\Sigma\) is a right permutable set, then, given \(s \in A\s{X;\sigma,\delta}\), there are \(n \in \N, r \in A\s{X;\sigma,\delta}\) such that \(sX^n = Xr\). From \eqref{sXn} we get 
\[
\delta'^n(s) = X\left(r - \sum_{k=0}^{n-1}\sigma'\delta'^k(s)X^{n-1-k}\right).
\]
Conversely, if, given \(s \in A\s{X;\sigma,\delta}\), we have that  \(\delta'^n(s) = Xt\) for some \(t \in A\s{X;\sigma,\delta}\), then, by \eqref{sXn}, 
\[
sX^n = X\left(\sum_{k=0}^{n-1}\sigma'\delta'^k(s)X^{n-1-k} + t\right). 
\]
An inductive argument shows then that \(\Sigma\) is a right permutable set for \(A\s{X;\sigma,\delta}\). 

\end{proof}

\begin{theorem}  \label{nilLaurent} Let \((\sigma,\delta)\) be a left skew derivation on \(A\) such that \(A\s{X;\sigma,\delta}\) exits with a compatible ring structure.  If \(\sigma\) is an automorphism and \(\delta^{'}\) is nilpotent, then  \(\Sigma\) is a right denominator set in \(A\s{X;\sigma,\delta}\)  and the product of \(A\ls{X;\sigma,\delta}\) satisfies that
\begin{equation}\label{Xmenosunos}
X^{-1}s = \sum_{k=0}^m\sigma'\delta'^{k}(s)X^{-1-k}, \qquad (s \in A\s{X;\sigma,\delta}). 
\end{equation}
\end{theorem}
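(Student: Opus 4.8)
The plan is to prove the two assertions separately: that $\Sigma=\{1,X,X^2,\dots\}$ is a right denominator set of $A\s{X;\sigma,\delta}$, which amounts to right permutability and right reversibility, and then the explicit formula \eqref{Xmenosunos}, which I would obtain from a short telescoping computation in the ring of fractions $A\ls{X;\sigma,\delta}=A\s{X;\sigma,\delta}\Sigma^{-1}$.

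\emph{Right permutability.} Fix $m\in\N$ with $\delta'^{m+1}=0$; this exists since $\delta'$ is nilpotent, and it is the index $m$ occurring in \eqref{Xmenosunos}. As $\sigma$ is an automorphism, Proposition~\ref{LaurentSeries} reduces right permutability of $\Sigma$ to finding, for each $s\in A\s{X;\sigma,\delta}$, some $n\in\N$ and $t\in A\s{X;\sigma,\delta}$ with $\delta'^n(s)=Xt$. Taking $n=m+1$ and $t=0$ works for all $s$, so $\Sigma$ is right permutable.

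\emph{Right reversibility.} By Lemma~\ref{permutablereversiblequatios} it suffices to show that $0=Xs$ has only the trivial solution, i.e.\ that the system \eqref{right reversible equations} forces $s=0$. I would put $t_i=\sigma(s_i)$ and use $\delta=-\delta'\sigma$ (which follows from $\delta'=-\delta\sigma^{-1}$); then the equations $\delta(s_0)=0$ and $\sigma(s_{i-1})+\delta(s_i)=0$ $(i\ge 1)$ become $\delta'(t_0)=0$ and $t_{i-1}=\delta'(t_i)$ $(i\ge 1)$. Iterating gives $t_j=\delta'^{k}(t_{j+k})$ for all $j,k\ge 0$, so taking $k=m+1$ yields $t_j=0$, hence $s_j=\sigma^{-1}(t_j)=0$ for all $j$ and $s=0$. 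Thus $\Sigma$ is a right denominator set, and by Definition~\ref{SkewLaurentSeries} and Lemma~\ref{LaurentL} the ring $A\ls{X;\sigma,\delta}$ is realized on $A\ls{X}$ with $X^i$ and $X^{-i}$ mutually inverse.

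\emph{The formula.} Rather than \eqref{Xmenosunos} directly, I would verify the equivalent identity $X\bigl(\sum_{k=0}^{m}\sigma'\delta'^{k}(s)X^{-1-k}\bigr)=s$ and then left-multiply by $X^{-1}$. From the compatible rule \eqref{Oreserie} and $\delta\sigma^{-1}=-\delta'$ one computes, for each $k$,
\[
X\sigma'\delta'^{k}(s)=X\sigma^{-1}\delta'^{k}(s)=\delta'^{k}(s)X-\delta'^{k+1}(s),
\]
hence $X\sigma'\delta'^{k}(s)X^{-1-k}=\delta'^{k}(s)X^{-k}-\delta'^{k+1}(s)X^{-1-k}$. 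Summing over $0\le k\le m$ the right-hand sides telescope to $s-\delta'^{m+1}(s)X^{-1-m}=s$, since $\delta'^{m+1}=0$.

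I expect the one genuinely delicate point to be right reversibility, and in particular recognizing that the hypothesis must be nilpotency of $\delta'$, not merely local nilpotency: the substitution $t_i=\sigma(s_i)$ only gives $t_j=\delta'^{k}(t_{j+k})$ with the argument changing with $k$, so a uniform bound $\delta'^{m+1}=0$ is really needed for all the $t_j$ to vanish at once; Example~\ref{polinomials} exhibits a locally nilpotent $\delta'$ for which $\Sigma$ is not right reversible. The remaining steps are routine bookkeeping with \eqref{Oreserie} and the relations between $(\sigma,\delta)$ and $(\sigma',\delta')$; I would still double-check the telescoping and that left-multiplication by the invertible element $X$ may be cancelled.
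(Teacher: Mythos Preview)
Your proof is correct and follows essentially the same route as the paper: right permutability via Proposition~\ref{LaurentSeries} with $\delta'^{m+1}=0$, right reversibility by the substitution $t_i=\pm\sigma(s_i)$ turning \eqref{right reversible equations} into $t_j=\delta'^{k}(t_{j+k})$, and the formula \eqref{Xmenosunos} from the compatible rule. The only cosmetic difference is that the paper reads \eqref{Xmenosunos} off from the already established identity \eqref{sXn} (taking $n$ with $\delta'^n=0$ and multiplying by $X^{-n}$ on the right and $X^{-1}$ on the left), whereas you verify $X\cdot(\text{RHS})=s$ by a direct telescoping computation; these are the same argument unwound.
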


\begin{proof}
 \(\Sigma\)  is right permutable, by Proposition \ref{LaurentSeries}.
Now, suppose that there exists \(s \in A\s{X;\sigma,\delta} \) such that
\(0= Xs\). Define  \(t = \sum_{i= 0}^\infty t_iX^i \in A\s{X;\sigma,\delta} \) with \(t_i = - \sigma(s_i)\).  Then, for every \(i,j\) we deduce, by using \eqref{right reversible equations}, that \(\delta^{'i}(t_{i+j}) =  t_j\). Since  \(\delta^{'}\) is nilpotent, we obtain that \(t=0\) which implies that \(s = 0\) because \(\sigma\) is an automorphism.  By Lemma \ref{permutablereversiblequatios}, \(\Sigma\) is right reversible. 

Finally, \eqref{Xmenosunos}  is immediate from  \eqref{sXn}
\end{proof}

Next example  shows that the local nilpotency of \(\delta\) and \(\delta'\), together with the right reversibility of \(\Sigma\), is not sufficient to ensure that \(\Sigma\) is a right denominator set.

\begin{example} Let \(A=\F[Y,Z]\) where  \(\F\) is a field and  \(\F[Y,Z]\) is the commutative polynomial ring with coefficients in \(\F\) over the variables \(Y\)  and \(Z\).   Take \(\sigma=id\) and define \(\delta(f(Y,Z))=Y \frac{\partial f}{\partial Z}\). It is clear that \(\delta\) and \(\delta'=-\delta\) are both derivations which are locally nilpotent but not nilpotent. We claim that there is no \(t = \sum_{i= 0}^\infty t_iX^i \in A\s{X;\sigma,\delta} \) such that \(\delta^{'i}(t_{i+j}) =  t_j\) and \(\delta'(t_0)=0\). Indeed, if  such a \(t\) exists, then \(t_0=f(Y)\) for some polynomial in \(\F[Y]\). Now, \(t_0= \delta'(t_1)=-Y\frac{\partial f(Z)}{\partial Z}\), and thus \(Y\) divides \(t_0\). Inductively, \(Y^k\) divides \(t_0\) for all \(k\geq 1\), which is not possible unless \(t=0\). So, the only solution to \eqref{right reversible equations} is the trivial one. Then, by Lemma \ref{permutablereversiblequatios}, \(\Sigma\) is right reversible set.

Let us see that   \(\Sigma\) is not a right permutable set of \(A\s{X;\sigma,\delta}\). Suppose that  \(\Sigma\) is a right permutable set, then, for  \(f= \sum_{i= 0}^\infty Z^{i+1}X^i \in A\s{X;\sigma,\delta} \), there are  \(s= \sum_{i= 0}^\infty s_iX^i \in A\s{X;\sigma,\delta} \) and \(k \geq0\) such that \(fX^{k}= Xs\). Then by \eqref{right permutable equations},  we have that
\[Z = s_{k-1}+\delta(s_k) \text{ and } Z^2 = s_{k}+\delta(s_{k+1}). \]

Since \(\delta(s_k) = Y\frac{\partial s_k}{\partial Z} \), we have that \(s_{k-1}=Z\) and \(s_k\in \F[Y]\). But this makes impossible that \(Z^2 = s_{k}+Y\frac{\partial s_{k+1}}{\partial Z}\). So,  \(\Sigma\) is not a right permutable set of \(A\s{X;\sigma,\delta}\).

\end{example}

We collect in the following theorem some  information relevant for our purposes concerning cyclic convolutional codes.

\begin{theorem}\label{LaurentSeries2}
Let \((\sigma,\delta)\) be a left skew derivation such that \(\sigma\) is an automorphism and \(\delta\) is locally nilpotent. Assume that  there exists \(m \geq 1\) such that \(\delta'^m = 0\). There is a unique ring structure on \(A\ls{X}\), denoted by \(A\ls{X;\sigma,\delta}\), containing \(A[X;\sigma,\delta]\) as a subring and such that the multiplication obeys the rules
\begin{equation}\label{shifting}
sX^\ell = \sum_{i=0}^\infty s_iX^{i+\ell},
\end{equation}
\begin{equation}\label{productolocal}
st = \sum_{n=0}^\infty \left[\left(\sum_{i=0}^{q_n}s_iX^i\right)\left(\sum_{i=0}^nt_iX^i\right)\right]_n X^n,
\end{equation}
\begin{equation}\label{Xmenosunoconmutacion}
X^{-1}s = \sum_{k=0}^m\sigma'\delta'^{k}(s)X^{-1-k}, 
\end{equation}
for all \(\ell \in \mathbb{Z}, s, t \in A\s{X;\sigma,\delta}\) and \(q_n  \geq \max_{0 \leq i \leq n } \{O_n(t_{i})\}\) for all \(n \in \N\).
Moreover, \(A\s{X;\sigma,\delta}\) is a subring of \(A\ls{X;\sigma,\delta}\). 
\end{theorem}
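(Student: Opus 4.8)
The plan is to assemble Theorem \ref{LaurentSeries2} from the results already established. First I would invoke Theorem \ref{locnil}: since $\delta$ is locally nilpotent, $A\s{X;\sigma,\delta}$ exists with a unique reasonable, compatible ring structure whose product obeys \eqref{terminon}, which is exactly \eqref{productolocal}. Next, since $\sigma$ is an automorphism and $\delta'^m = 0$ (so $\delta'$ is nilpotent), Theorem \ref{nilLaurent} tells us that $\Sigma = \{1,X,X^2,\dots\}$ is a right denominator set of $A\s{X;\sigma,\delta}$, and that $X^{-1}s = \sum_{k=0}^m \sigma'\delta'^k(s)X^{-1-k}$, i.e.\ \eqref{Xmenosunoconmutacion}. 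Then Definition \ref{SkewLaurentSeries}, justified by Lemma \ref{LaurentL}, gives the ring of fractions $A\s{X;\sigma,\delta}\Sigma^{-1}$, which via the left $A$-module isomorphism of Lemma \ref{LaurentL} can be regarded as a ring structure on $A\ls{X}$ containing $A\s{X;\sigma,\delta}$ (hence also $A[X;\sigma,\delta]$) as a subring, and in which the map $sX^\ell$ behaves as in \eqref{shifting}.

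The remaining work is the \emph{uniqueness} clause: any ring structure on $A\ls{X}$ containing $A[X;\sigma,\delta]$ as a subring and satisfying \eqref{shifting}, \eqref{productolocal}, \eqref{Xmenosunoconmutacion} must coincide with the constructed one. I would argue as follows. Given such a structure, its restriction to $A\s{X}$ is a reasonable ring structure (condition \eqref{shift} is the $\ell \geq 0$ case of \eqref{shifting}) satisfying \eqref{Oreseria}, so by the uniqueness part of Proposition \ref{productonil} (equivalently Theorem \ref{locnil}) this restriction is forced to be $A\s{X;\sigma,\delta}$ with product \eqref{productolocal}. Now every element of $A\ls{X}$ is of the form $sX^{-\ell}$ with $s \in A\s{X}$ and $\ell \geq 0$ — this is the computation $tX^{-i_0} = \sum_{i\geq i_0} t_iX^{i-i_0}$ carried out in the proof of Lemma \ref{LaurentL}, which uses only \eqref{shifting} and that $X^\ell, X^{-\ell}$ are mutually inverse (the latter being forced since $X^{-1}$ must be a two-sided inverse of $X$ in any such structure; one checks $X\cdot X^{-1} = 1$ from \eqref{shifting} and $X^{-1}\cdot X = 1$ from \eqref{Xmenosunoconmutacion} with $s = 1$). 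Hence the product of $s_1X^{-\ell_1}$ and $s_2X^{-\ell_2}$ is determined once we know how to move $X^{-\ell_1}$ past $s_2$, and that is exactly what \eqref{Xmenosunoconmutacion} (iterated $\ell_1$ times) prescribes; the result lands back in $A\s{X}$-times-a-power-of-$X$, where everything is already pinned down. So the whole multiplication table is forced.

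For the final assertion that $A\s{X;\sigma,\delta}$ is a subring of $A\ls{X;\sigma,\delta}$: this is immediate from Definition \ref{SkewLaurentSeries} and Lemma \ref{LaurentL}, since the canonical map $A\s{X;\sigma,\delta} \to A\s{X;\sigma,\delta}\Sigma^{-1}$ is injective (its kernel $\{s : sX^n = 0 \text{ some } n\}$ is zero by \eqref{shift}) and its image is a subring, matching the copy of $A\s{X}$ inside $A\ls{X}$ under the transported structure.

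The step I expect to be the main obstacle is the bookkeeping in the uniqueness argument: one must check that writing a general Laurent series as $sX^{-\ell}$ is compatible across the two presentations and that the formula \eqref{Xmenosunoconmutacion}, applied repeatedly, genuinely returns an element of $A\s{X;\sigma,\delta}$ times $X^{-k}$ with no ambiguity — this relies on the nilpotency $\delta'^m = 0$ to keep the sums in \eqref{Xmenosunoconmutacion} finite, and on \eqref{shift} to guarantee the representation $sX^{-\ell}$ is unique. Everything else is a direct citation of Theorem \ref{locnil}, Theorem \ref{nilLaurent}, and Lemma \ref{LaurentL}.
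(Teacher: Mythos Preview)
Your proposal is correct and follows essentially the same route as the paper: invoke Proposition~\ref{productonil}/Theorem~\ref{locnil} for the existence and uniqueness of $A\s{X;\sigma,\delta}$, then Theorem~\ref{nilLaurent} and Lemma~\ref{LaurentL} (via Definition~\ref{SkewLaurentSeries}) for the passage to $A\ls{X;\sigma,\delta}$, and finally argue uniqueness by noting that the three displayed rules pin down every product once the power-series product is fixed. Your uniqueness discussion is in fact more explicit than the paper's (which simply asserts that the unique product on the subring ``determines uniquely that of the ring of fractions''), but the underlying idea is identical.
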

\begin{proof}
 Proposition \ref{productonil} guarantees that \(A\s{X;\sigma,\delta}\) exists with a  compatible ring structure.  By Theorem \ref{nilLaurent}, we get the ring \(A\ls{X;\sigma,\delta}\) which, as a left module, is \(A\ls{X}\) (see Lemma \ref{LaurentL} and Definition \ref{SkewLaurentSeries}).  Moreover, it contains \(A\s{X;\sigma,\delta}\) as a subring. The product of this subring is unique by Proposition \ref{productonil} and determines uniquely that of the ring of fractions \(A\ls{X;
 \sigma,\delta} = A\s{X;\sigma,\delta}\Sigma^{-1}\).  Thus, the product of any two Laurent power series is determined by \eqref{Xmenosunoconmutacion}, \eqref{productolocal} and \eqref{shifting}. 
\end{proof}

\begin{remark}
The skew Laurent series ring from \cite[Proposition 1.2]{Schneider/Venjakob:2010} is a particular case of \ref{LaurentSeries2}. Observe that our construction does not require \(A\) to be Noetherian nor any topological condition on \(A\), \(\sigma\) or \(\delta\). 
\end{remark}

Finally, under the conditions of Theorem \ref{LaurentSeries2}, we find the skew Laurent polynomials form a subring of \(A\ls{X;\sigma,\delta}\), as the following proposition proves.

\begin{proposition}\label{x-n}
Suppose that  \(\sigma\) is an automorphism,  \(\delta\) is locally nilpotent and \(\delta'\)  is nilpotent, say of degree $m$, and let $s\in A\s{X; \sigma, \delta}$.  Then 

\begin{equation}\label{x -m}
X^{-n}s=\sum_{k=0}^{n(m-1)}\left(\sum_{\underset{k_j\leq m-1}{k_1+\cdots+k_n=k}}\sigma'\delta'^{k_1}\cdots\sigma'\delta'^{k_n}(s)\right)X^{-n-k}
\end{equation}
for each \(n\geq 1\). 

In particular, the Laurent polynomial left \(A\)--submodule \(A[X,X^{-1}]\) becomes a subring of \(A\ls{X;\sigma,\delta}\) which will be denoted by \(A[X,X^{-1}; \sigma,\delta]\). 
\end{proposition}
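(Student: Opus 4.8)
The plan is to establish the formula \eqref{x -m} by induction on $n\geq 1$, using \eqref{Xmenosunoconmutacion} from Theorem \ref{LaurentSeries2} as the base case. First I would observe that the case $n=1$ is exactly \eqref{Xmenosunoconmutacion}, since for $n=1$ the only composition $k_1 = k$ with $k_1 \leq m-1$ forces $0 \leq k \leq m-1$, and the inner sum collapses to the single term $\sigma'\delta'^{k}(s)$; note also that $\delta'^{k}(s) = 0$ whenever $k \geq m$, so extending the outer index to $n(m-1)$ causes no harm. For the inductive step, I would write $X^{-(n+1)}s = X^{-1}\big(X^{-n}s\big)$, substitute the induction hypothesis for $X^{-n}s$, and then push $X^{-1}$ past each monomial using \eqref{Xmenosunoconmutacion} applied to the coefficient series. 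The key mechanical point is that $X^{-1}$ acting on a term $c\,X^{-n-k}$ (with $c$ a coefficient series) gives $\sum_{k_{n+1}=0}^{m-1}\sigma'\delta'^{k_{n+1}}(c)\,X^{-1-k_{n+1}}X^{-n-k} = \sum_{k_{n+1}=0}^{m-1}\sigma'\delta'^{k_{n+1}}(c)\,X^{-(n+1)-(k+k_{n+1})}$, after which one reindexes by the total exponent and recognizes that a composition of length $n+1$ summing to the new index is precisely a length-$n$ composition summing to $k$ together with a final part $k_{n+1}$. Collecting terms with equal exponent then yields \eqref{x -m} with $n$ replaced by $n+1$, the outer index running up to $(n+1)(m-1)$.

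One subtlety to address is convergence: the right-hand side of \eqref{x -m} is a genuine element of $A\ls{X}$ because the outer sum is finite (it runs over $0 \leq k \leq n(m-1)$), and for each fixed $k$ the inner sum is finite as well; moreover $X^{-n}s$ has all its coefficients of index $< -n$ vanishing, so the expression lies in $A\ls{X}$ as claimed. I would also remark that, since $\delta'^{k_j} = 0$ for $k_j \geq m$, imposing the restriction $k_j \leq m-1$ in the inner sum loses nothing; it is kept only to make the index set finite and explicit. The argument uses associativity of the product of $A\ls{X;\sigma,\delta}$ (guaranteed by Theorem \ref{LaurentSeries2} via Definition \ref{SkewLaurentSeries} and Lemma \ref{LaurentL}) to justify $X^{-(n+1)}s = X^{-1}(X^{-n}s)$, and the $A$-linearity of $\sigma'$, $\delta'$ to distribute them over the finite sums appearing in the induction hypothesis.

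For the final assertion, I would argue as follows. The left $A$-submodule $A[X,X^{-1}]$ is spanned by the monomials $X^\ell$, $\ell \in \mathbb{Z}$, together with left multiplication by $A$; it is closed under addition by construction. To see it is closed under multiplication, it suffices to check that $X^{\ell}\,(a X^{\ell'}) \in A[X,X^{-1}]$ for all $\ell,\ell' \in \mathbb{Z}$ and $a \in A$, since a general product of two Laurent polynomials expands into a finite $A$-linear combination of such terms. For $\ell \geq 0$ this follows from \eqref{Xnf}, which expresses $X^\ell a$ as a skew polynomial, hence an element of $A[X]\subseteq A[X,X^{-1}]$, and then multiplying on the right by $X^{\ell'}$ shifts exponents by \eqref{shifting}, staying in $A[X,X^{-1}]$. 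For $\ell < 0$, formula \eqref{x -m} applied with $s = a \in A$ shows that $X^{\ell}a$ is a finite $A$-linear combination of the $X^{\ell - k}$ with $0 \leq k \leq |\ell|(m-1)$, hence again lies in $A[X,X^{-1}]$; multiplying on the right by $X^{\ell'}$ preserves this. Thus $A[X,X^{-1}]$ is a subring, which we denote $A[X,X^{-1};\sigma,\delta]$.

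The main obstacle I anticipate is purely bookkeeping: correctly matching the length-$(n+1)$ compositions produced by one extra application of $X^{-1}$ against the stated index set, and keeping track of the order of the operators $\sigma'\delta'^{k_1}\cdots\sigma'\delta'^{k_n}$ (which do not commute in general), so that the newly introduced factor $\sigma'\delta'^{k_{n+1}}$ is inserted on the correct side. Getting the composition-reindexing exactly right — in particular that $X^{-1}$ contributes the \emph{last} factor $\sigma'\delta'^{k_{n+1}}$ on the right of the operator word, consistently with how \eqref{Xmenosunoconmutacion} was used to define the base case — is where care is needed; everything else is routine distribution of $A$-linear maps over finite sums.
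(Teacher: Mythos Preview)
Your approach is correct and matches the paper's proof exactly: induction on $n$, with the base case given by \eqref{Xmenosunoconmutacion} and the inductive step obtained by writing $X^{-(n+1)}s = X^{-1}(X^{-n}s)$ and applying \eqref{Xmenosunoconmutacion} to each coefficient. One small slip to fix when you write it out: your displayed formula $\sigma'\delta'^{k_{n+1}}(c)$ correctly places the new factor on the \emph{left} (outermost) of the operator word, not on the right as your closing remark suggests; after relabeling $(k_{n+1},k_1,\dots,k_n)\mapsto(k_1,\dots,k_{n+1})$ you recover the stated sum over all length-$(n+1)$ compositions, exactly as in the paper.
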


\begin{proof}
We proceed by induction. If \(n=1\), \eqref{x -m} is exactly \eqref{Xmenosunoconmutacion}. Now suppose that \eqref{x -m} holds for \(n\). Then, by  \eqref{Xmenosunoconmutacion} and inductive hypothesis,  
\[
\begin{array}{l}
X^{-n-1}s\\
=X^{-1} \left(\underset{k=0}{\overset{n(m-1)}{\sum}}\left(\underset{\underset{k_j\leq m-1}{k_1+\cdots+k_n=k}}{\sum}\sigma'\delta'^{k_1}\cdots\sigma'\delta'^{k_n}(s)\right)X^{-n-k}\right)\\
=\left(\underset{k=0}{\overset{n(m-1)}{\sum}}\left(\underset{\underset{k_j\leq m-1}{k_1+\cdots+k_n=k}}{\sum}X^{-1} \sigma'\delta'^{k_1}\cdots\sigma'\delta'^{k_n}(s)\right)X^{-n-k}\right)\\
=\left(\underset{k=0}{\overset{n(m-1)}{\sum}}\underset{\underset{k_j\leq m-1}{k_1+\cdots+k_n=k}}{\sum}\left(\underset{i=0}{\overset{m-1}\sum}\sigma'\delta^{i}( \sigma'\delta'^{k_1}\cdots\sigma'\delta'^{k_n}(s)\right)X^{-1-i}X^{-n-k}\right)\\
=\left(\underset{k=0}{\overset{(n+1)(m-1)}{\sum}}\left(\underset{\underset{k_j\leq m-1}{k_1+\cdots+k_{n+1}=k}}{\sum}\left(\sigma'\delta^{k_1}( \sigma'\delta'^{k_2}\cdots\sigma'\delta'^{k_{n+1}}(s))\right)\right)X^{-n-1-k}\right),\\
\end{array}
\]
as we required.
\end{proof}

Suppose that  \(\sigma\) is an automorphism and  that both \(\delta\) and \(\delta'\) are nilpotent. Choose \(m \geq 1\) such that \(\delta^m = \delta'^m = 0\). Using \eqref{terminon} and \eqref{ga}, and taking that \(q_n(a) = (n+1)m-1\) for any \(a \in A\) and all \(n \geq 0\) into account, we get

 \begin{equation}\label{lr infty}
\left(\sum_{i=0}^{\infty} s_iX^i\right)a = \sum_{i=0}^{\infty} \left(\sum_{j=i}^{(i+1)m-1}s_jN_{i}^j(a)\right)X^i, 
\end{equation}
for all \(a_i\in A\). 

\section{Modules}\label{modulos}

Given a right \(A\)--module \(M\) and a ring extension \(B\) of \(A\), the additive group \(M \otimes_A B\) is endowed with a right \(B\)--module structure determined by the rule \[(m \otimes_A b)b' = m \otimes_A bb'\] for any \(m \in M, b,b' \in B.\) 
This construction may be applied to \(B = A[X;\sigma,\delta]\).  There is a canonical isomorphism of additive groups
\[
M \otimes_A A[X;\sigma,\delta] \to M[X], 
\]
defined on generators of the tensor product by 
\[
m \otimes_A \sum_{i = 0}^{n} s_iX^i  \mapsto \sum_{i = 0}^{n}ms_iX^i. 
\]
The right \(A[X;\sigma,\delta]\)--module structure of \(M \otimes_A A[X;\sigma,\delta]\) is transferred to \(M[X]\).
Explicitly, by \eqref{Xnf}, this right module structure on \(M[X]\) is given by
\begin{equation}\label{AModuloDerecha}
\left(\sum_{i=0}^nm_iX^i\right)\left(\sum_{j=0}^\ell f_jX^j\right) = \sum_{j=0}^\ell\sum_{i=0}^n \left(\sum_{k=i}^n m_kN_i^k(f_j)\right)X^{i+j}.  
\end{equation}

Assume \(A\s{X;\sigma,\delta}\) exists with a compatible ring structure. Let
\[
\pi : M \otimes_A A\s{X;\sigma,\delta}  \to M\s{X}, 
\]
defined on generators of the tensor product by 
\[
m \otimes_A \sum_{i = 0}^{\infty} s_iX^i  \mapsto \sum_{i = 0}^{\infty}ms_iX^i. 
\]

If \(M\) is finitely presented as a right \(A\)--module, then the former map is an isomorphism (see, e.g. \cite[Lemma I.13.2]{Stenstrom:1975}). Thus, the right \(A\s{X;\sigma,\delta}\)--module structure on \(M \otimes_A A\s{X;\sigma,\delta}\) is transferred to a right \(A\s{X;\sigma,\delta}\)--module structure on \(M\s{X}\). It is useful to know how to compute \(\pi^{-1}\). Given a set of generators \(\{m_1,\dots,m_h\}\) of the right \(A\)--module \(M\), and \(s = \sum_{i = 0}^{\infty} s_iX^i \in M\s{X}\), set, for every \(i\), \(s_i = \sum_{k=1}^h m_ks_{ki}\), for adequate \(s_{ki} \in A\). Then 
\begin{equation}\label{pimenosuno}
\pi^{-1}\left(\sum_{i = 0}^\infty s_iX^i\right) = \sum_{k=1}^h m_k \otimes_A \sum_{i = 0}^\infty s_{ki} X^i . 
\end{equation}

As a consequence of \eqref{pimenosuno} and \eqref{shift}, the right \(A\s{X;\sigma,\delta}\)--module structure on \(M\s{X}\) satisfies the rule
\begin{equation}\label{shiftM}
\left(\sum_{i=0}^\infty s_iX^i\right)X^n = \sum_{i=0}^\infty s_iX^{i+n}.
\end{equation}

When \(\delta\) is locally nilpotent, the right \(A\s{X;\sigma,\delta}\)--structure of \(M\s{X}\) obeys a similar rule than the product of the skew formal series ring.

\begin{proposition}\label{productonilM}
Assume that \(\delta\) is locally finite. For any finitely presented right \(A\)--module \(M\), there is a right \(A\s{X;\sigma,\delta}\)--module structure on \(M\s{X}\) such that, by restriction of scalars, makes \(M[X]\) an \(A[X;\sigma,\delta]\)--submodule of \(M\s{X}\). Moreover, for \(s = \sum_{i=0}^\infty s_i X^i \in M\s{X}\) and \(t = \sum_{i=0}^\infty t_iX^i \in A\s{X;\sigma,\delta}\), for any sequence of non-negative integers \(q_n\) such that \(q_n  \geq \max_{0 \leq i \leq n } \{O_n(t_{i})\}\) we have
\begin{equation}\label{productos}
st = \sum_{n=0}^\infty \left[\left(\sum_{i=0}^{q_n}s_iX^i\right)\left(\sum_{i=0}^nt_iX^i\right)\right]_n X^n.
\end{equation}
\end{proposition}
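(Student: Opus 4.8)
The plan is to build the module structure by base change along \(A \leq A\s{X;\sigma,\delta}\) and then extract the coefficientwise formula \eqref{productos} by reducing everything to finite, genuinely polynomial computations.

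Since \(\delta\) is locally nilpotent, Proposition \ref{productonil} provides the reasonable and compatible ring \(A\s{X;\sigma,\delta}\), whose product is governed by \eqref{terminon}. As the right \(A\)--module \(M\) is finitely presented, the canonical map \(\pi : M \otimes_A A\s{X;\sigma,\delta} \to M\s{X}\) is an isomorphism of abelian groups by \cite[Lemma I.13.2]{Stenstrom:1975}, and I would transport the right \(A\s{X;\sigma,\delta}\)--module structure of \(M \otimes_A A\s{X;\sigma,\delta}\) to \(M\s{X}\) along \(\pi\). Although \eqref{pimenosuno} presents \(\pi^{-1}\) using a generating set \(\{m_1,\dots,m_h\}\) of \(M\) and scalars \(s_{ki}\), the element \(\pi^{-1}(s)\) is intrinsic because \(\pi\) is bijective. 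The rule \eqref{shiftM} was already observed to follow from \eqref{pimenosuno} and \eqref{shift}.

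Next I would settle the submodule assertion. Let \(\iota : M \otimes_A A[X;\sigma,\delta] \to M[X]\) be the canonical isomorphism and \(j : M \otimes_A A[X;\sigma,\delta] \to M \otimes_A A\s{X;\sigma,\delta}\) the natural map. From \eqref{pimenosuno} one gets \(\pi \circ j = \iota\); hence \(j\) is injective with image \(\pi^{-1}(M[X])\). Because \(A[X;\sigma,\delta]\) is a subring of \(A\s{X;\sigma,\delta}\), the product \(bb'\) of elements \(b,b'\in A[X;\sigma,\delta]\) is the same whether computed in \(A[X;\sigma,\delta]\) or in \(A\s{X;\sigma,\delta}\); it follows that \(j(M \otimes_A A[X;\sigma,\delta])\) is invariant under the restriction to \(A[X;\sigma,\delta]\) of the \(A\s{X;\sigma,\delta}\)--action on \(M \otimes_A A\s{X;\sigma,\delta}\), and that there this restricted action agrees, through \(j\), with the usual \(A[X;\sigma,\delta]\)--module structure of \(M \otimes_A A[X;\sigma,\delta]\). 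Transporting along \(\pi\) and using \(\iota = \pi\circ j\), this makes \(M[X] = \iota(M\otimes_A A[X;\sigma,\delta])\) an \(A[X;\sigma,\delta]\)--submodule of \(M\s{X}\), carrying exactly the structure displayed via \eqref{Xnf} right before the statement.

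Finally, for \eqref{productos}, fix \(s = \sum_i s_iX^i \in M\s{X}\), \(t = \sum_i t_iX^i \in A\s{X;\sigma,\delta}\) and \(n \in \mathbb{N}\), pick generators \(m_1,\dots,m_h\) of \(M\), and write \(s_i = \sum_{k=1}^h m_k s_{ki}\). By \eqref{pimenosuno}, \(\pi^{-1}(s) = \sum_k m_k \otimes_A s^{(k)}\) with \(s^{(k)} = \sum_i s_{ki}X^i \in A\s{X;\sigma,\delta}\), so
\[
(st)_n = \left(\pi\Big(\sum_{k=1}^h m_k \otimes_A s^{(k)} t\Big)\right)_n = \sum_{k=1}^h m_k\,(s^{(k)} t)_n .
\]
Each \(s^{(k)} t\) is a product in \(A\s{X;\sigma,\delta}\); since \(O_n(t_i)\) depends only on \(t_i\) and \(n\), a single \(q_n \geq \max_{0 \leq i \leq n}\{O_n(t_i)\}\) serves every \(k\), and \eqref{terminon} gives
\[
(s^{(k)} t)_n = \left[\left(\sum_{i=0}^{q_n} s_{ki}X^i\right)\left(\sum_{i=0}^n t_iX^i\right)\right]_n ,
\]
a computation taking place in \(A[X;\sigma,\delta]\). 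Multiplying by \(m_k\), summing over \(k\), and using \(\iota^{-1}\big(\sum_{i=0}^{q_n} s_iX^i\big) = \sum_k m_k \otimes_A \sum_{i=0}^{q_n} s_{ki}X^i\) together with the \(A[X;\sigma,\delta]\)--action on \(M[X]\) from the preceding paragraph, I obtain
\[
\sum_{k=1}^h m_k\,(s^{(k)} t)_n = \left[\left(\sum_{i=0}^{q_n} s_iX^i\right)\left(\sum_{i=0}^n t_iX^i\right)\right]_n ,
\]
the product now being that of \(M[X]\). Combining the three displays yields \eqref{productos}. The step requiring the most care is this last reduction: one must keep straight that the truncated products must be evaluated in \(A[X;\sigma,\delta]\), respectively \(M[X]\), where arithmetic is honestly polynomial, and verify the compatibility of \(\pi\) with finite truncations and with the coefficient functional \([\ \cdot\ ]_n\); the fact that \(O_n(t_i)\) does not depend on the chosen generating set is precisely what lets a single exponent \(q_n\) work for all \(k\) simultaneously.
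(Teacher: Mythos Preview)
Your proof is correct and follows essentially the same approach as the paper: transport the module structure along the isomorphism \(\pi\) of \eqref{pimenosuno}, then use \eqref{terminon} coordinatewise with respect to a generating set \(\{m_1,\dots,m_h\}\) to obtain \eqref{productos}. The paper's proof is terser because the existence of the module structure and the submodule claim are already set up in the paragraphs preceding the proposition; your version spells these out explicitly (in particular the argument via \(\iota\) and \(j\) for the submodule assertion), but the underlying computation is the same.
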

\begin{proof}
We make use of formula \eqref{pimenosuno}. Thus, given \(t = \sum_{i= 0}^\infty t_iX^i \in A\s{X;\sigma,\delta}\), if we set, for any \(n \in \mathbb{N}\), 
\(
q_n \geq \max_{0 \leq i \leq n } \{O_n(t_{i})\},
\)
then we get
\begin{multline*}
(st)_n =  \sum_{k=1}^h m_k \left[ \left(\sum_{i =0}^{q_n}s_{ki}X^i\right)\left(\sum_{i=0}^{n}t_{i}X^i\right)\right]_n \\
= \left[ \left(\sum_{k=1}^h \sum_{i =0}^{q_n}m_ks_{ki}X^i\right)\left(\sum_{i=0}^{n}t_{i}X^i\right)\right]_n = \left[ \left( \sum_{i =0}^{q_n}s_{i}X^i\right)\left(\sum_{i=0}^{n}t_{i}X^i\right)\right]_n,
\end{multline*}
according to \eqref{terminon}.
\end{proof}

Proposition \ref{LaurentLM} implies that, under its hypotheses, we can consider a well defined right \(A\ls{X;\sigma,\delta}\)--module structure on \(M\ls{X}\).

\begin{proposition}\label{LaurentLM}
Assume that \(A\s{X;\sigma,\delta}\) exists with a  compatible ring structure for \((\sigma,\delta)\). If \(\Sigma = \{1, X, X^2,\dots \}\) is a right denominator set for \(A\s{X;\sigma,\delta}\), then, for every finitely presented right \(A\)--module \(M\), the canonical map \(M\s{X;\sigma,\delta} \to M\s{X;\sigma,\delta}\Sigma^{-1}\) is injective.  Moreover, every nonzero element of  \(M\s{X;\sigma,\delta}\Sigma^{-1}\) is uniquely written as \(s X^{i_0}\) for \(s \in M\s{X;\sigma,\delta}\) with \(s_0 \neq 0\) and \(i_0 \in \mathbb{Z}\). 
\end{proposition}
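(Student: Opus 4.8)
The plan is to mimic the proof of Lemma \ref{LaurentL}, which establishes exactly this kind of statement for the base ring, and transport it to the module setting via the isomorphism $\pi : M \otimes_A A\s{X;\sigma,\delta} \to M\s{X}$. Concretely, since $M$ is finitely presented, $M\s{X;\sigma,\delta}$ is identified with $M \otimes_A A\s{X;\sigma,\delta}$ as a right $A\s{X;\sigma,\delta}$--module, and the module of fractions $M\s{X;\sigma,\delta}\Sigma^{-1}$ is then $(M \otimes_A A\s{X;\sigma,\delta}) \otimes_{A\s{X;\sigma,\delta}} A\s{X;\sigma,\delta}\Sigma^{-1} \cong M \otimes_A A\ls{X;\sigma,\delta}$, using that $\Sigma$ is a right denominator set so that $A\ls{X;\sigma,\delta} = A\s{X;\sigma,\delta}\Sigma^{-1}$ exists (Definition \ref{SkewLaurentSeries}).

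First I would prove injectivity of the canonical map. The kernel of $M\s{X;\sigma,\delta} \to M\s{X;\sigma,\delta}\Sigma^{-1}$ consists of those $s \in M\s{X;\sigma,\delta}$ with $sX^n = 0$ for some $n \geq 0$. By \eqref{shiftM}, right multiplication by $X^n$ merely shifts coefficients, $sX^n = \sum_{i=0}^\infty s_iX^{i+n}$, so $sX^n = 0$ forces every $s_i = 0$, i.e. $s = 0$. Hence the canonical map is injective, and from now on I may regard $M\s{X;\sigma,\delta}$ as an $A$--submodule of $M\s{X;\sigma,\delta}\Sigma^{-1}$.

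Next I would establish the normal form. Every element of $M\s{X;\sigma,\delta}\Sigma^{-1}$ is of the form $s X^{-k}$ with $s \in M\s{X;\sigma,\delta}$ and $k \geq 0$ (standard for a ring of fractions at a right denominator set); writing $s = \sum_{j \geq j_0} s_j X^j$ with $s_{j_0} \neq 0$ and using \eqref{shiftM} to factor $s = (\sum_{j \geq 0} s_{j+j_0} X^j) X^{j_0}$, the element becomes $s' X^{j_0 - k}$ with $s' \in M\s{X;\sigma,\delta}$ and $s'_0 \neq 0$. For uniqueness, suppose $s X^{i_0} = t X^{j_0}$ with $s_0, t_0 \neq 0$ and (without loss of generality) $i_0 \geq j_0$; multiplying on the right by $X^{-j_0}$ and using injectivity of the canonical map, the equality $s X^{i_0 - j_0} = t$ holds already in $M\s{X;\sigma,\delta}$, and \eqref{shiftM} (again just a shift of coefficients) forces $i_0 = j_0$ and $s = t$. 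This is precisely the argument in the second half of the proof of Lemma \ref{LaurentL}; the only change is that the coefficients now live in $M$ rather than $A$, which is harmless because \eqref{shiftM} is the module analogue of \eqref{shift} and the canonical map is injective.

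The main obstacle, such as it is, is the bookkeeping needed to justify that every element of the module of fractions is a single (as opposed to a finite sum of) right fraction $m X^{-k}$ with $m \in M\s{X;\sigma,\delta}$: this uses that $\Sigma$ is a right Ore set together with a common-denominator argument, exactly as for the ring case. Once that reduction is in place, everything reduces to the shift formula \eqref{shiftM} and the injectivity already proved, so no genuinely new difficulty arises beyond transcribing the proof of Lemma \ref{LaurentL} into the module language.
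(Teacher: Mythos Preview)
Your proposal is correct and follows essentially the same approach as the paper: the paper's proof computes the kernel via \eqref{shiftM} exactly as you do, and then simply says ``following the steps of the proof of Lemma \ref{LaurentL} one obtains a valid one for the present proposition,'' which is precisely the transcription you spell out in detail. Your extra paragraph on the tensor-product identification and the common-denominator reduction is additional context the paper leaves implicit, but the core argument is the same.
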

\begin{proof}
The kernel of the canonical map \(M\s{X} \to M\s{X}\Sigma^{-1}\) consists of those power series \(s\) such that \(sX^n = 0\) for some \(n \in \N\). It follows from \eqref{shiftM} that this kernel is zero. Following the steps of the proof of Proposition \ref{LaurentL} one obtains a valid one for the present proposition. 
\end{proof}

We finally record the version of Theorem \ref{LaurentSeries2} for finitely presented right \(A\)--modules.

\begin{theorem}\label{MSmodulo}
Let \((\sigma,\delta)\) be a left skew derivation such that \(\sigma\) is an automorphism and \(\delta\) is locally nilpotent. Assume that  there exists \(m \geq 1\) such that \(\delta'^m = 0\).  For every finitely presented right \(A\)--module \(M\), the Laurent power series module \(M\ls{X}\) is endowed with the structure of a right \(A\ls{X;\sigma,\delta}\)--module such that, by restriction of scalars, makes \(M\s{X}\) an \(A\s{X;\sigma,\delta}\)--submodule of \(M\ls{X}\). This module structure is determined by the rules
\[
sX^\ell = \sum_{i=0}^\infty s_iX^{i+\ell},
\]
\[
st = \sum_{n=0}^\infty \left[\left(\sum_{i=0}^{q_n}s_iX^i\right)\left(\sum_{i=0}^nt_iX^i\right)\right]_n X^n,
\]
\[
yX^{-1} t = \sum_{k=0}^my\sigma'\delta'^{k}(t)X^{-1-k},
\]
for all \(y \in M, \ell \in \mathbb{Z}, s \in M\s{X;\sigma,\delta}, t \in A\s{X;\sigma,\delta}\) and \(q_n  \geq \max_{0 \leq i \leq n } \{O_n(t_{i})\}\) for all \(n \in \N\).
\end{theorem}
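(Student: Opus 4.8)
The plan is to reduce everything to the three already–established facts of the previous section, applied level by level through the isomorphism $\pi$ of Section~\ref{modulos}. Under the stated hypotheses, Proposition~\ref{productonil} gives the reasonable and compatible ring $A\s{X;\sigma,\delta}$, and Theorem~\ref{nilLaurent} (together with Lemma~\ref{LaurentL} and Definition~\ref{SkewLaurentSeries}) gives the ring $A\ls{X;\sigma,\delta} = A\s{X;\sigma,\delta}\Sigma^{-1}$, realized on the additive group $A\ls{X}$. So the ambient ring is in hand; what remains is to put the module structure on $M\ls{X}$ and to verify the three display formulas.

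First I would produce the module structure. By Proposition~\ref{productonilM}, $M\s{X}$ is a right $A\s{X;\sigma,\delta}$--module satisfying \eqref{shiftM} and \eqref{productos}, and $M[X]$ is an $A[X;\sigma,\delta]$--submodule. By Proposition~\ref{LaurentLM}, since $\Sigma$ is a right denominator set (Theorem~\ref{nilLaurent}), the canonical map $M\s{X;\sigma,\delta} \to M\s{X;\sigma,\delta}\Sigma^{-1}$ is injective, and every nonzero element of the localized module is uniquely $sX^{i_0}$ with $s\in M\s{X;\sigma,\delta}$, $s_0\neq 0$, $i_0\in\mathbb{Z}$. The map sending $sX^{i_0} \mapsto \sum_{i\geq 0} s_i X^{i+i_0}$ is then an isomorphism of additive groups $M\s{X;\sigma,\delta}\Sigma^{-1} \to M\ls{X}$ (exactly as in the proof of Lemma~\ref{LaurentL}, with $M$ in place of $A$), and it intertwines the right $A\s{X;\sigma,\delta}\Sigma^{-1}$--action with a right action of that ring on $M\ls{X}$; identifying $A\s{X;\sigma,\delta}\Sigma^{-1}$ with $A\ls{X;\sigma,\delta}$ gives the desired right $A\ls{X;\sigma,\delta}$--module structure on $M\ls{X}$, and by construction $M\s{X}$ sits inside as an $A\s{X;\sigma,\delta}$--submodule.

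Then I would read off the three rules. The shift rule $sX^\ell = \sum_i s_i X^{i+\ell}$ for $s\in M\s{X;\sigma,\delta}$ and $\ell\in\mathbb{Z}$ follows from \eqref{shiftM} for $\ell\geq 0$ and, for $\ell<0$, from the way the isomorphism above was defined (it is literally the unique extension of \eqref{shiftM} compatible with the $\Sigma$--action); the formula $st = \sum_n [(\sum_{i\le q_n} s_i X^i)(\sum_{i\le n} t_i X^i)]_n X^n$ for $s\in M\s{X}$, $t\in A\s{X;\sigma,\delta}$ is exactly \eqref{productos}; and the rule $yX^{-1}t = \sum_{k=0}^m y\sigma'\delta'^k(t)X^{-1-k}$ for $y\in M$ follows by combining associativity with \eqref{Xmenosunos}: $yX^{-1}t = y(X^{-1}t)$ where $X^{-1}t = \sum_k \sigma'\delta'^k(t)X^{-1-k}$ inside $A\ls{X;\sigma,\delta}$ by Theorem~\ref{nilLaurent}, and then $y$ multiplies this Laurent series on the left in $M\ls{X}$ coefficientwise via the shift rule, so only the $\sigma'\delta'^k(t)$ (which lie in $A\s{X;\sigma,\delta}$ since $\delta'^m=0$) need be pushed through. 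Finally, uniqueness of the module structure follows from the uniqueness already in Proposition~\ref{productonil} and Theorem~\ref{nilLaurent}: $M\s{X}$ carries a unique such structure by Proposition~\ref{productonilM}, and it determines that of the localization $M\s{X;\sigma,\delta}\Sigma^{-1} \cong M\ls{X}$.

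The routine parts here are the bookkeeping in translating $\pi^{-1}$ through the localization; the only genuinely delicate point — and it is already settled upstream — is that $\Sigma$ is a right denominator set, which is where the hypothesis $\delta'^m=0$ (rather than merely locally nilpotent) is used, via Theorem~\ref{nilLaurent}. So I do not expect a serious obstacle: the theorem is essentially Theorem~\ref{LaurentSeries2} with a finitely presented module $M$ carried along, and the proof is a transcription of the module-theoretic analogues established just before it.
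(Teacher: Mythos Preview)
Your proposal is correct and follows essentially the same route as the paper: the paper's proof is the one-liner ``It follows from Proposition~\ref{productonilM}, Proposition~\ref{LaurentLM} and Corollary~\ref{nilLaurent}'', and you have simply unpacked how these three ingredients combine --- Proposition~\ref{productonilM} for the $A\s{X;\sigma,\delta}$--module structure on $M\s{X}$ with the product formula, Theorem~\ref{nilLaurent} for $\Sigma$ being a right denominator set and the $X^{-1}$--commutation rule, and Proposition~\ref{LaurentLM} for identifying the localization $M\s{X}\Sigma^{-1}$ with $M\ls{X}$. Your added remarks on how the three displayed rules are read off, and on where the hypothesis $\delta'^m=0$ is actually used, are accurate and match the paper's logic.
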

\begin{proof}
It follows from Proposition \ref{productonilM}, Proposition \ref{LaurentLM} and Corollary \ref{nilLaurent}.
\end{proof}

\section{General cyclic convolutional codes}\label{biyeccion}

Let us first recall the classical definition of convolutional code according to C. Roos. By \(\F\) we denote any field. We have then the Laurent formal power series field \(\F\ls{X}\).  For any \(\F\)--vector space \(V\), the \(\F\)--vector space \(V\ls{X}\) becomes an \(\F\ls{X}\)--vector space according to the rule 
\begin{equation}\label{laesperada}
\left(\sum_{i}s_iX^i\right)\left(\sum_{j}t_jX^j\right) = \sum_k\left(\sum_{i+j = k}s_it_j\right)X^k,
\end{equation}
for all \(\sum_{i}s_iX^i \in V\ls{X}\) and \(\sum_{j}t_jX^j\in \F\ls{X}\). Analogously, \(V[X]\) becomes an \(\F[X]\)--module. 

\begin{definition}\cite[Definition 1]{Roos:1979}\label{CCRoos}
A convolutional linear code of rate $k/n$ is a $k$--dimensional vector subspace of $\ff^n\ls{X}$ with a basis consisting polynomial sequences, that is, of polynomial vectors in $\ff^n[X]$. 
\end{definition}

Our aim is to state a mathematically sound notion of cyclic convolutional code with respect to a left skew derivation \((\sigma,\delta)\) of a finite dimensional algebra \(A\) in the spirit of Definition \ref{CCRoos}. To this end, we will use the Laurent formal power series rings \(A\ls{X;\sigma,\delta}\) and their modules discussed in the previous sections. 

In what follows, \(A\) will denote a finite-dimensional algebra over the field \(\F\). Under this hypothesis, every locally nilpotent \(\F\)--linear endomorphism of  \(A\) becomes nilpotent. Along this section we will assume that \(\sigma\) is an \(\F\)--algebra automorphism of \(A\) and that both \(\delta\) and \(\delta'\) are nilpotent \(\F\)--linear skew derivations. We thus fix an index \(m\) such that \(\delta^m = \delta'^m = 0\). 

 Observe that the existence of this last \(\F\)--algebra structure on \(A\ls{X}\) is guaranteed by Theorem \ref{LaurentSeries2}. Given a finite-dimensional right \(A\)--module \(M\), the Laurent power series right \(A\)--module \(M\ls{X}\) is endowed with the structure of a right \(A\ls{X;\sigma,\delta}\)--module 
by virtue of Theorem \ref{MSmodulo}. In this way, by restriction of scalars, \(M\ls{X}\) is a right \(A\)--module. Precisely, it follows from \eqref{x -m} and \eqref{lr infty}, that, for \(a\in A\), \(s \in M\ls{X}\). 
\medskip
\begin{equation}\label{sx-ma}
\begin{array}{l}
sX^{-n_0}a \\
= s\overset{n_0(m-1)}{\underset{k=0}{\sum}}\left(\underset{k_1+\cdots+k_{n_0}=k}{\sum}\sigma'\delta'^{k_1}\cdots\sigma'\delta'^{k_{n_0}}(a)\right)X^{-n_0-k} \\
= \overset{n_0(m-1)}{\underset{k=0}{\sum}} \left(\underset{k_1+\cdots+k_{n_0}=k}{\sum}\left(\underset{i=0}{\overset{\infty}{\sum}}s_iX^{i}\right)\sigma'\delta'^{k_1}\cdots\sigma'\delta'^{k_{n_0}}(a)\right)X^{-n_0-k} \\
= \overset{n_0(m-1)}{\underset{k=0}{\sum}}\left(\underset{k_1+\cdots+k_{n_0}=k}{\sum} \,\,\underset{i=0}{\overset{\infty}{\sum}}\left(\overset{(i+1)m-1}{\underset{j=i}{\sum}}s_jN_{i}^j(\sigma'\delta'^{k_1}\cdots\sigma'\delta'^{k_{n_0}}(a))\right)X^i\right)X^{-n_0-k} \\
= \underset{i=0}{\overset{\infty}{\sum}} \left(\overset{(i+1)m-1}{\underset{j=i}{\sum}}\,\,\overset{n_0(m-1)}{\underset{k=0}{\sum}} \underset{k_1+\cdots+k_{n_0}=k}{\sum}s_jN_{i}^j(\sigma'\delta'^{k_1}\cdots\sigma'\delta'^{k_{n_0}}(a))\right)X^{i-n_0-k}. \\
\end{array}
\end{equation}
Also, since \(\F\ls{X}\) is a subring of \(A\ls{X;\sigma,\delta}\), we see that \(M\ls{X}\) becomes a right \(\F\ls{X}\)--vector space. By \eqref{productos} and \eqref{x -m}, this vector space structure is governed, as we might expect, by the rule \eqref{laesperada}, since \(\delta, \sigma\) and \(\delta'\) are \(\F\)--linear. 

\medskip

Fix an \(\F\)--basis \(\{a_1,\dots,a_r\}\) of \(A\).

\begin{lemma}\label{DSmodulo}
A subset \(\mathcal{D}\) of \(M\ls{X}\) is a right \(A\ls{X;\sigma,\delta}\)--submodule of \(M\ls{X}\) if, an only if, it is both an \(\F\ls{X}\)--vector subspace and a right \(A\)--submodule of \(M\ls{X}\). 
\end{lemma}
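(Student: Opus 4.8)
The plan is to prove the two directions separately, with the forward direction being essentially trivial and the reverse direction requiring a density/limit argument.

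First I would observe that the forward implication is immediate: if $\mathcal{D}$ is a right $A\ls{X;\sigma,\delta}$--submodule, then since both $\F\ls{X}$ and $A$ sit inside $A\ls{X;\sigma,\delta}$ as subrings (the former by the remark preceding Lemma \ref{DSmodulo}, the latter by Definition \ref{SkewLaurentSeries}), restriction of scalars shows $\mathcal{D}$ is closed under multiplication by elements of each, hence is simultaneously an $\F\ls{X}$--subspace and a right $A$--submodule of $M\ls{X}$.

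For the converse, suppose $\mathcal{D}$ is both an $\F\ls{X}$--subspace and a right $A$--submodule. I want to show $\mathcal{D}s \subseteq \mathcal{D}$ for every $s \in A\ls{X;\sigma,\delta}$. The key structural fact is that $A\ls{X;\sigma,\delta}$ is generated, as a ring, by $A$, $\F\ls{X}$, and $X^{-1}$ in a controlled way: by \eqref{productolocal}, \eqref{shifting} and \eqref{Xmenosunoconmutacion} together with the fixed $\F$--basis $\{a_1,\dots,a_r\}$, every element of $A\s{X;\sigma,\delta}$ can be written as $\sum_{k=1}^r \lambda_k a_k$ with $\lambda_k \in \F\s{X} \subseteq \F\ls{X}$, and every element of $A\ls{X;\sigma,\delta}$ has the form $s X^{\ell}$ with $s \in A\s{X;\sigma,\delta}$, $\ell \in \BbbZ$ (Lemma \ref{LaurentL}, Definition \ref{SkewLaurentSeries}). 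Thus it suffices to check that, for $d \in \mathcal{D}$: (a) $d a_k \in \mathcal{D}$, which holds by the $A$--submodule hypothesis; (b) $d \lambda \in \mathcal{D}$ for $\lambda \in \F\ls{X}$, which holds by the $\F\ls{X}$--subspace hypothesis; and (c) $d X^{-1} \in \mathcal{D}$. The crux is (c): I would use formula \eqref{Xmenosunoconmutacion}, namely $X^{-1}s = \sum_{k=0}^m \sigma'\delta'^{k}(s)X^{-1-k}$, but here multiplication is on the right by $X^{-1}$, so instead I would invoke \eqref{shifting} in the form $dX^{-1} = \sum_i d_i X^{i-1}$, which shows $dX^{-1}$ is simply a shift of $d$ — and crucially, the $\F\ls{X}$--subspace structure on $M\ls{X}$ obeys \eqref{laesperada}, so multiplication by $X^{-1} \in \F\ls{X}$ already realizes this shift. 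Hence $dX^{-1} = d \cdot X^{-1}$ with $X^{-1}$ viewed in $\F\ls{X}$, and closure under $\F\ls{X}$ gives $dX^{-1} \in \mathcal{D}$.

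Finally I would assemble these: writing a general $s \in A\ls{X;\sigma,\delta}$ as $s = (\sum_k \lambda_k a_k)X^{\ell}$ and using that $\mathcal{D}$ is closed under each of the three operations (multiplication by $a_k$, by scalars in $\F\ls{X}$, and by $X^{-1}$, hence by $X^{\ell}$ for any $\ell \in \BbbZ$), together with additivity, yields $\mathcal{D}s \subseteq \mathcal{D}$. The one point demanding care — the part I expect to be the main obstacle — is making precise that an arbitrary element of $A\s{X;\sigma,\delta}$ is an $\F\ls{X}$--combination of the $a_k$ with the operations assembling correctly: one must check that if $d \in \mathcal{D}$ and $t = \sum_k \lambda_k a_k \in A\s{X;\sigma,\delta}$ with $\lambda_k \in \F\s{X}$, then $dt = \sum_k (d\lambda_k)a_k$, i.e. that the $\F\ls{X}$--action and the $A$--action on $M\ls{X}$ are compatible as the restriction of a single $A\ls{X;\sigma,\delta}$--action; this compatibility is exactly what formulas \eqref{productos} and \eqref{sx-ma} encode, since $\F\ls{X}$ is central over $\F$ and commutes past the $a_k$ up to the $N_i^j$ and $\sigma'\delta'^{k}$ corrections, which are themselves $\F$--linear. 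Once that bookkeeping is done the inclusion $\mathcal{D}s\subseteq\mathcal{D}$ follows, completing the proof.
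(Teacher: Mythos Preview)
Your strategy coincides with the paper's: decompose an arbitrary $t\in A\ls{X;\sigma,\delta}$ along the fixed $\F$--basis $\{a_1,\dots,a_r\}$ into pieces lying in $A$ and in $\F\ls{X}$, and then apply the two closure hypotheses in turn. The only slip is the side on which you place the $\F\ls{X}$--coefficients. You assert that every element of $A\s{X;\sigma,\delta}$ can be written as $\sum_k\lambda_k a_k$ with $\lambda_k\in\F\s{X}$ on the \emph{left}; this is not immediate, because the left action of $\F\s{X}$ on $A\s{X;\sigma,\delta}$ is twisted (e.g.\ $Xa_k=\sigma(a_k)X+\delta(a_k)$, not $a_kX$), and the formulas you cite do not yield this expression directly. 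The paper avoids the issue by writing $t=\sum_j a_j\lambda_j$ with $\lambda_j=\sum_i t_{ij}X^i\in\F\ls{X}$ on the \emph{right}, which follows at once from $t_i=\sum_j t_{ij}a_j$ and the fact that each $t_{ij}\in\F$ commutes with $a_j$. With that order, for $d\in\mathcal{D}$ one gets $dt=\sum_j(da_j)\lambda_j$, and the two closure properties finish the job in one line. Swapping the order in your argument repairs it immediately.

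Incidentally, the paper goes further and unwinds $X^{-n_0}a_j$ explicitly via \eqref{x -m} before invoking membership in $\mathcal{D}$; your observation that $X^{-1}$ already lies in $\F\ls{X}$, so that closure under $\F\ls{X}$ subsumes closure under shifts, shows this explicit computation is not strictly necessary and is a genuine simplification on your part.
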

\begin{proof}
Let us just check that, if \(\mathcal{D}\) is an \(\F\ls{X}\)--vector subspace and a right \(A\)--submodule of \(M\ls{X}\), then it is a right \(A\ls{X;\sigma,\delta}\)--module. Thus, pick \(sX^{-n_0} \in \mathcal{D}\), where \(s \in M\s{X}\) and \(n \geq 0\), and \(t = \sum_it_iX^i  \in A\ls{X;\sigma,\delta}\). For each \(i\) write \(t_i = \sum_{j=1}^r t_{ij}a_j\) for suitable \(t_{ij} \in \F\). Thus, \(t = \sum_{j=1}^r a_j\sum_{i}t_{ij}X^i\). 
\begin{multline*}
sX^{-n_0} t = sX^{-n_0}\sum_{j=1}^r a_j \sum_{i}t_{ij}X^i \\ = s \sum_{j=1}^r \sum_{k=0}^{n_0(m-1)}\left(\sum_{k_1+\cdots+k_{n_0}=k}\sigma'\delta'^{k_1}\cdots\sigma'\delta'^{k_{n_0}}(a_j)\right)X^{-n_{0}-k}\sum_{i}t_{ij}X^i \\ = s \sum_{j=1}^r \sum_{k=0}^{n_{0}(m-1)}\left(\sum_{k_1+\cdots+k_{n_0}=k}\sigma'\delta'^{k_1}\cdots\sigma'\delta'^{k_{n_0}}(a_j)\right)\sum_{i}t_{ij}X^{i-n_{0}-k} \in \mathcal{D}, \end{multline*}
since \(s \in \mathcal{D}\). 
\end{proof}

We are in position of give a definition of general cyclic structure on a convolutional code extending that of \cite{Gomez/Sanchez:2024}. We thus assume \(\F^n\) to be endowed of a right \(A\)--module structure. 

\begin{definition}\label{CCC}
A convolutional code \(\mathcal{D}\) is said to be \((\sigma,\delta,A)\)--cyclic if \(\mathcal{D}\) is a right \(A\)--submodule of
\(\F^n\ls{X}\). Equivalently, by Lemma \ref{DSmodulo}, a \((\sigma,\delta,A)\)--cyclic convolutional code is just a right \(A\ls{X;\sigma,\delta}\)--submodule of \(\F^n\ls{X}\) having a set of generators in \(\F^n[X]\). 
\end{definition}

\begin{remark}
Assume \(\delta = 0\). Then the right \(A\)--module structure on \(\F^n\ls{X}\) given in \eqref{sx-ma} boils down to
\[
\left(\sum_{i \geq -n_0}^\infty s_iX^i\right)a =  \sum_{i \geq -n_0}^\infty s_i\sigma^i(a)X^i,
\]
which agrees with \cite[(3.1)]{Gomez/Sanchez:2024}. As a consequence, \((\sigma,0,A)\)--cyclic convolutional codes are just the \((\sigma,A)\)--CCCs discussed in \cite{Gomez/Sanchez:2024}. 
\end{remark}

When \(\delta = 0\), the map \(\mathcal{D} \mapsto \mathcal{D}\cap \F^n[X]\) is a bijection between the set of all \((\sigma,A)\)--CCCs codes of rate \(k/n\) and the set of all \(A[X;\sigma]\)--submodules of \(\F^n[X]\) that are \(\F[X]\)--direct summands of rank \(k\) (see \cite[Proposition 2]{Gomez/Sanchez:2024}). To extend this correspondence to the case of \((\sigma,\delta,A)\)--CCCs, we need to discuss some basic facts on convolutional codes, according to \cite{Gomez/Sanchez:2024}.

We need several lemmas. Given a subset \(B \subseteq \F^n\ls{X}\), the notation \(B^*\) stands for the \(\F\ls{X}\)--vector subspace of \(\F^{n}\ls{X}\) spanned by \(B\). 

\begin{lemma}\emph{(\cite[Lemma 1]{Gomez/Sanchez:2024})}\label{cortamodulo}
Let $V$ be an $\ff\ls{X}$--vector subspace of $\ff^n\ls{X}$. Then $V \cap \ff^n[X]$ is an $\ff[X]$--direct summand of $\ff^n[X]$. Moreover, $V$ is a convolutional code if and only if $V = (V \cap \ff^n[X])^*$.
\end{lemma}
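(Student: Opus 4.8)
\textbf{Proof plan for Lemma~\ref{cortamodulo}.}
The statement has two parts: (a) for any $\F\ls{X}$--subspace $V$ of $\F^n\ls{X}$, the intersection $V \cap \F^n[X]$ is an $\F[X]$--direct summand of $\F^n[X]$; and (b) $V$ is a convolutional code (i.e.\ admits an $\F\ls{X}$--basis of polynomial vectors) if and only if $V = (V\cap\F^n[X])^*$. For part (a), the plan is to exploit that $\F\ls{X}$ is a field with the $X$--adic valuation and $\F[X]$ is the valuation subring inside it (more precisely $\F[X,X^{-1}]$ sits inside $\F\ls{X}$, and $\F^n[X]$ is a lattice). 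First I would produce a ``predictable'' or ``minimal-degree'' basis of $V$: starting from any finite $\F\ls{X}$--basis of $V$, clear denominators to get polynomial generators, then perform row reduction over $\F[X]$ to obtain a basis $\{g_1,\dots,g_k\}\subseteq\F^n[X]$ of $V$ which is \emph{reduced} in the sense that the leading (highest-degree) coefficient vectors are $\F$--linearly independent; equivalently, the matrix formed by the $g_i$ has full rank ``at infinity.'' The standard argument (Forney's predictable degree property) then shows that any polynomial vector in $V$ is an $\F[X]$--linear combination of the $g_i$, which gives $V\cap\F^n[X] = \sum_i \F[X]\,g_i$, a free $\F[X]$--module of rank $k$; and a reduced basis can be completed to one of $\F^n[X]$, exhibiting $V\cap\F^n[X]$ as a direct summand.

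For part (b), one direction is immediate: if $V = (V\cap\F^n[X])^*$ then $V$ is spanned over $\F\ls{X}$ by polynomial vectors, so it is a convolutional code. For the converse, suppose $V$ has an $\F\ls{X}$--basis of vectors in $\F^n[X]$; these lie in $V\cap\F^n[X]$, so they span $(V\cap\F^n[X])^*$ over $\F\ls{X}$, whence $V \subseteq (V\cap\F^n[X])^* \subseteq V$, giving equality. So the content of (b) is really just unwinding the definition of $B^*$ together with the definition of convolutional code, and the substance of the lemma is concentrated in part (a).

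The step I expect to be the main obstacle is establishing that $V\cap\F^n[X]$ is \emph{exactly} $\sum_i\F[X]g_i$ for a reduced polynomial basis $\{g_i\}$ of $V$ — i.e.\ the predictable degree property. The subtle point: given $v\in V\cap\F^n[X]$, write $v = \sum_i f_i g_i$ with $f_i\in\F\ls{X}$; one must argue that in fact $f_i\in\F[X]$. If some $f_i$ had a negative-degree term, then comparing the highest-degree behavior (using that the leading coefficient vectors of the $g_i$ are independent, so no cancellation of the top-degree contributions is possible) forces a contradiction with $v$ being polynomial. Making this rigorous requires a careful degree bookkeeping, choosing among all representations one minimizing $\max_i(\deg f_i + \deg g_i)$ and showing the minimum is achieved with all $f_i$ polynomial. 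Once this is in hand, freeness and the direct-summand property follow by a routine Smith-normal-form / basis-completion argument over the PID $\F[X]$, using that the reduced basis matrix is ``column-reduced'' hence has a unimodular completion.
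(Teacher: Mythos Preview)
The paper does not prove this lemma; it is quoted verbatim from \cite[Lemma~1]{Gomez/Sanchez:2024}, so there is no in-paper argument to compare against. I will therefore just assess your plan on its own.

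Your treatment of part (b) is fine: both inclusions are immediate from the definitions, exactly as you say.

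Part (a), however, has a real gap. The lemma is stated for an \emph{arbitrary} $\F\ls{X}$--subspace $V\subseteq\F^n\ls{X}$, not only for convolutional codes, and your first move (``clear denominators to get polynomial generators'') is not available in that generality. An element of $\F\ls{X}$ is a genuine Laurent series; multiplying by a power of $X$ only lands you in $\F\s{X}$, not in $\F[X]$. Concretely, if $e\in\F\s{X}$ is transcendental over $\F(X)$ and $V=\F\ls{X}\cdot(1,e)\subseteq\F^2\ls{X}$, then $V\cap\F^2[X]=\{0\}$ and $V$ has no nonzero polynomial vector at all, so no reduced polynomial basis of $V$ exists. (Relatedly, your parenthetical that ``$\F[X]$ is the valuation subring'' is off: the valuation ring of $\F\ls{X}$ for the $X$--adic valuation is $\F\s{X}$.) Your predictable-degree argument also leans on ``highest-degree behaviour'' of the coefficients $f_i\in\F\ls{X}$, but such series need not have a highest degree, so that bookkeeping would need to be reformulated via orders rather than degrees even in the cases where polynomial generators do exist.

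A much shorter route to (a), valid for arbitrary $V$, is to show directly that $\F^n[X]/(V\cap\F^n[X])$ is torsion-free over the PID $\F[X]$: if $p\in\F^n[X]$ and $0\neq f\in\F[X]$ satisfy $fp\in V\cap\F^n[X]$, then $fp\in V$ and $f$ is a unit in $\F\ls{X}$, so $p=f^{-1}(fp)\in V$, hence $p\in V\cap\F^n[X]$. Torsion-freeness over a PID gives freeness of the quotient, and therefore $V\cap\F^n[X]$ is a direct summand. This bypasses the need for any explicit basis construction.
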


Given a subset \(B \subseteq \F^n[X]\), we denote by \(\overline{B}\) the smallest \(\F[X]\)--direct summand of \(\F^n[X]\) that contains \(B\).

\begin{lemma}\emph{(\cite[Lemma 2]{Gomez/Sanchez:2024})}\label{extmodulo}
If $B \subseteq \ff^n[X]$, then $\overline{B} = B^* \cap \ff^n[X]$. Moreover,  the rank of the \(\ff[X]\)--module $\ff[X] B$ is equal to the dimension over $\ff\ls{X}$ of $B^*$. 
\end{lemma}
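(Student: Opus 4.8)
The statement is about modules over the principal ideal domain $\ff[X]$, and I would argue by pure linear algebra, using the inclusions $\ff[X]\subseteq\ff(X)\subseteq\ff\ls{X}$. Set $N:=B^*\cap\ff^n[X]$. By Lemma \ref{cortamodulo} applied to the $\ff\ls{X}$-vector subspace $B^*$ of $\ff^n\ls{X}$, the submodule $N$ is an $\ff[X]$-direct summand of $\ff^n[X]$, and it obviously contains $B$. So for the first assertion it only remains to prove that $N$ is contained in every $\ff[X]$-direct summand $D$ of $\ff^n[X]$ that contains $B$.

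Given such a $D$, write $\ff^n[X]=D\oplus D'$. Concatenating $\ff[X]$-bases of $D$ and of $D'$ produces an $\ff[X]$-basis of $\ff^n[X]$, so the matrix expressing it in terms of the standard basis lies in $\GL_n(\ff[X])\subseteq\GL_n(\ff\ls{X})$; hence those same vectors also form an $\ff\ls{X}$-basis of $\ff^n\ls{X}$, and therefore $\ff^n\ls{X}=\langle D\rangle\oplus\langle D'\rangle$, where $\langle\,\cdot\,\rangle$ denotes $\ff\ls{X}$-linear span. Consequently $\langle D\rangle\cap\ff^n[X]=D$: if $v\in\langle D\rangle\cap\ff^n[X]$, write $v=d+d'$ with $d\in D$, $d'\in D'$; then $d'=v-d\in\langle D\rangle\cap\langle D'\rangle=0$, so $v=d\in D$. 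Since $B\subseteq D$ gives $B^*\subseteq\langle D\rangle$, we conclude $N=B^*\cap\ff^n[X]\subseteq\langle D\rangle\cap\ff^n[X]=D$, which proves $\overline B=B^*\cap\ff^n[X]$.

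For the rank equality I would first reduce to a finite $B$: since $\ff^n[X]$ is Noetherian over $\ff[X]$, the submodule $\ff[X]B$ is finitely generated, so $\ff[X]B=\ff[X]B_0$ for a finite subset $B_0=\{b_1,\dots,b_m\}\subseteq B$, and then also $B^*=B_0^*$. Let $M$ be the $n\times m$ matrix over $\ff[X]$ with columns $b_1,\dots,b_m$. A free $\ff[X]$-basis of $\ff[X]B$ remains $\ff(X)$-linearly independent (clear denominators) and $\ff(X)$-spans the column space of $M$, so $\rank_{\ff[X]}\ff[X]B=\rank_{\ff(X)}M$; and since the rank of a matrix over a field is unchanged by field extension, $\rank_{\ff(X)}M=\rank_{\ff\ls{X}}M=\dim_{\ff\ls{X}}B^*$.

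The one point needing genuine care is the identity $\langle D\rangle\cap\ff^n[X]=D$, that is, the fact that an $\ff[X]$-direct sum decomposition of $\ff^n[X]$ stays direct after extending scalars to $\ff\ls{X}$; granting this, everything else is routine manipulation with free modules over the PID $\ff[X]$, its fraction field $\ff(X)$, and field-extension invariance of matrix rank.
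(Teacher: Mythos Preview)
Your argument is correct. The paper does not actually prove this lemma: it is stated with a citation to \cite[Lemma 2]{Gomez/Sanchez:2024} and no proof is given here, so there is no ``paper's own proof'' to compare against. Your approach---using Lemma~\ref{cortamodulo} to see that $B^*\cap\ff^n[X]$ is a direct summand containing $B$, then showing minimality by extending an $\ff[X]$-direct sum decomposition $\ff^n[X]=D\oplus D'$ to $\ff\ls{X}$ via the basis matrix in $\GL_n(\ff[X])$, and finally handling the rank statement through matrix rank over $\ff(X)\subseteq\ff\ls{X}$---is a clean self-contained proof. The step you flag as needing care, namely $\langle D\rangle\cap\ff^n[X]=D$, is handled correctly by the uniqueness of the decomposition in $\ff^n\ls{X}=\langle D\rangle\oplus\langle D'\rangle$.
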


The following extends \cite[Lemma 4]{Gomez/Sanchez:2024} to the current setting. 

\begin{lemma}\label{estrellamodulo}
If \(B\) is any right \(A\)-submodule of \(\F^{n}\ls{X}\), then \(B^{*}\) is a right \(A\)-submodule of \(\F^{n}\ls{X}\).
\end{lemma}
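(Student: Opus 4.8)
The plan is to reduce everything to a commutation fact inside the ring $A\ls{X;\sigma,\delta}$. By Theorem \ref{MSmodulo} applied to $M=\F^n$, the additive group $\F^n\ls{X}$ carries a right $A\ls{X;\sigma,\delta}$--module structure, and both the right $A$--module structure and the right $\F\ls{X}$--vector space structure on $\F^n\ls{X}$ that are used to define $B^*$ arise from it by restriction of scalars along the subring inclusions $A\hookrightarrow A\ls{X;\sigma,\delta}$ and $\F\ls{X}\hookrightarrow A\ls{X;\sigma,\delta}$. Since $B^*$ is the set of finite sums $\sum_l b_l\lambda_l$ with $b_l\in B$ and $\lambda_l\in\F\ls{X}$, and since $B^*$ is by construction an $\F\ls{X}$--subspace, it suffices to show $B^*a\subseteq B^*$ for each $a\in A$; and by associativity of the module action, $(\sum_l b_l\lambda_l)a=\sum_l b_l(\lambda_l a)$, so everything comes down to analysing the products $\lambda a$ for $\lambda\in\F\ls{X}$, $a\in A$, computed in $A\ls{X;\sigma,\delta}$.

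The key step is the claim that every element of $A\ls{X;\sigma,\delta}$ can be written as $\sum_{j=1}^r a_j\mu_j$ with $\mu_j\in\F\ls{X}$; in particular $\lambda a\in\sum_{j=1}^r a_j\F\ls{X}$ for all $\lambda\in\F\ls{X}$, $a\in A$. I would prove this directly: given $t=\sum_i t_iX^i\in A\ls{X}$, expand $t_i=\sum_{j=1}^r t_{ij}a_j$ with $t_{ij}\in\F$; since the scalars of $\F\ls{X}$ lie in the centre of $A\ls{X;\sigma,\delta}$ and the left $A$--module structure of $A\ls{X;\sigma,\delta}$ is the pointwise one (Lemma \ref{LaurentL}, Definition \ref{SkewLaurentSeries}), the series $\mu_j:=\sum_i t_{ij}X^i\in\F\ls{X}$ satisfy $a_j\mu_j=\sum_i t_{ij}a_jX^i$, whence $\sum_{j=1}^r a_j\mu_j=\sum_i\big(\sum_j t_{ij}a_j\big)X^i=t$. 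The same conclusion can alternatively be read off the explicit multiplication rules \eqref{Xmenosunoconmutacion}, \eqref{x -m} and \eqref{sx-ma} (equivalently \eqref{lr infty}), since $\sigma$, $\delta$ and the operators $\sigma'\delta'^{k}$ are $\F$--linear and therefore send the basis $\{a_1,\dots,a_r\}$ into $\F$--combinations of itself.

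Granting the claim, the proof concludes quickly. For $v=\sum_l b_l\lambda_l\in B^*$ and $a\in A$, write $\lambda_l a=\sum_{j=1}^r a_j\mu_{lj}$ with $\mu_{lj}\in\F\ls{X}$. Then
\[
va=\sum_l b_l(\lambda_l a)=\sum_l\sum_{j=1}^r (b_la_j)\mu_{lj},
\]
and each $b_la_j$ belongs to $B$ because $B$ is a right $A$--submodule of $\F^n\ls{X}$. Hence $va$ is a finite $\F\ls{X}$--linear combination of elements of $B$, i.e. $va\in B^*$. Therefore $B^*$ is a right $A$--submodule of $\F^n\ls{X}$.

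The only genuine obstacle is the key step, that is, verifying in the Laurent series ring the strengthened commutation $\lambda a\in\sum_j a_j\F\ls{X}$ rather than merely $\lambda a\in A\ls{X;\sigma,\delta}$; this is bookkeeping with the reasonable product, the pointwise left $A$--action and the centrality of $\F$, and the passage from $A\s{X;\sigma,\delta}$ to $A\ls{X;\sigma,\delta}$ introduces only the $\F$--linear operators $\sigma'\delta'^{k}$, so no new difficulty appears. Everything else is formal.
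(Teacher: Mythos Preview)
Your proof is correct and follows essentially the same approach as the paper's: reduce to showing $b\lambda a\in B^*$ for $b\in B$, $\lambda\in\F\ls{X}$, $a\in A$, and then use the $\F$--basis $\{a_1,\dots,a_r\}$ of $A$ to rewrite $\lambda a$ as $\sum_j a_j\mu_j$ with $\mu_j\in\F\ls{X}$, whence $b\lambda a=\sum_j(ba_j)\mu_j\in B^*$. The paper obtains this decomposition by direct computation with the formulas \eqref{lr infty} and \eqref{x -m}, splitting into the non-negative and negative parts of $\lambda$, whereas you extract it more cleanly from the fact that the left $A$--module structure of $A\ls{X;\sigma,\delta}$ is the pointwise one; the underlying argument is the same.
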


\begin{proof} The elements of \(B^{*}\) are of the form \(b_1f_1+\cdots+b_tf_t\) where \(f_k\in \F\ls{X}\) and \(b_k\in B\) for \(1\leq k \leq t\). So, it suffices if we prove that \(bfa\in B^{*}\) for every \(a\in A\), \(b\in B\) and 
\(f=\sum_{i\geq i_0}f_iX^{i}\in \F\ls{X}\) with \(i_0\in \mathbb{Z}\). We consider two cases for \(s\): the sum of all monomials with non negative degrees and the sum of all monomials with negative degrees. First, let \(f=\sum_{i\geq 0}f_iX^{i}\in \F\s{X}.\)

Then, since \(\delta\) is nilpotent and by \eqref{lr infty},

\[bfa=\left(b\sum_{ i\geq 0}f_iX^{i}\right)a=\left(\sum_{i\geq 0}b\left(\sum_{j=i}^{(i+1)m-1}f_jN^{j}_{i}(a)\right)\right)X^{i}.\]
 
 Since \(A\) is finite dimensional over \(\F\) we have our \(\F\)-basis \(\{a_1,...,a_r\}\). Then \(\sum_{j=i}^{(i+1)m-1}N^{j}_{i}(a)f_j=\sum_{k=1}^{r}f_{ik}a_k\) for some \(f_{ik}\in \F\) for each \(i\geq 0\). Therefore
 
\[
bfa=\sum_{i\geq 0}b\left(\sum_{k=1}^{r}f_{ik}a_k\right)X^{i}=\sum_{k=1}^{r}ba_k\sum_{i\geq 0}f_{ik}X^{i}\in B^{*},
\]
since \(B\) is a \(A\)-submodule.

Now, consider \(f=\left( \sum_{i\geq i_0}^{-1}f_iX^{i}\right)\) for some  \(i_0 \leq -1\)  By \((\ref{x -m})\) and since \(B\) is a right \(A\)-submodule, it is straightforward that  \(bf_{i}X^{i}a\in B^{*}\) for each  \(i_0 \leq i\leq -1\). Hence \(bfa\in B^{*},\) as we required.
\end{proof}

\begin{theorem}\label{correspondencia}
The map \(\mathcal{D} \mapsto \mathcal{D} \cap \F^n[X]\) is a bijection between the set of all \((\sigma,\delta,A)\)--cyclic convolutional codes of rate \(k/n\) an the set of all right \(A[X;\sigma,\delta]\)--submodules of \(\F^n[X]\) that are \(\F[X]\)--direct summands of rank \(k\) of \(\F^n[X]\). 
\end{theorem}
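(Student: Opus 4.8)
The plan is to establish the bijection by exhibiting an inverse map and checking that both compositions are the identity, relying on the machinery already developed for $(\sigma,A)$-cyclic codes in \cite{Gomez/Sanchez:2024}. First I would check that the map is well defined: if $\mathcal{D}$ is a $(\sigma,\delta,A)$-cyclic convolutional code of rate $k/n$, then, by Lemma \ref{DSmodulo}, $\mathcal{D}$ is an $\F\ls{X}$-vector subspace and a right $A$-submodule of $\F^n\ls{X}$; since $\mathcal{D}$ is in particular a convolutional code, Lemma \ref{cortamodulo} tells us $\mathcal{D}\cap\F^n[X]$ is an $\F[X]$-direct summand of $\F^n[X]$, and since $\dim_{\F\ls{X}}\mathcal{D}=k$, Lemma \ref{extmodulo} gives that this direct summand has rank $k$. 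It remains to see that $\mathcal{D}\cap\F^n[X]$ is a right $A[X;\sigma,\delta]$-submodule of $\F^n[X]$. For this, take $v\in\mathcal{D}\cap\F^n[X]$ and $f=\sum_i a_iX^i\in A[X;\sigma,\delta]$; since $A[X;\sigma,\delta]$ is a subring of $A\ls{X;\sigma,\delta}$ and $\mathcal{D}$ is an $A\ls{X;\sigma,\delta}$-submodule, $vf\in\mathcal{D}$, while $vf\in\F^n[X]$ because $\F^n[X]$ is a right $A[X;\sigma,\delta]$-submodule of $\F^n\ls{X}$ (indeed, the restriction to $\F^n[X]$ of the module structure on $\F^n\ls{X}$ coincides with the polynomial module structure — this follows from \eqref{productos} and the fact that $O_n$ is bounded since all the operators are nilpotent).

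Next I would define the inverse map. Given a right $A[X;\sigma,\delta]$-submodule $\mathcal{C}$ of $\F^n[X]$ that is an $\F[X]$-direct summand of rank $k$, I would send it to $\mathcal{C}^*$, the $\F\ls{X}$-span of $\mathcal{C}$ inside $\F^n\ls{X}$. By Lemma \ref{cortamodulo}, $\mathcal{C}^*$ is a convolutional code (since $\mathcal{C}=\overline{\mathcal{C}}$ is already an $\F[X]$-direct summand, $\mathcal{C}=\mathcal{C}^*\cap\F^n[X]$ by Lemma \ref{extmodulo}, so $\mathcal{C}^*=(\mathcal{C}^*\cap\F^n[X])^*$), and by Lemma \ref{extmodulo} its dimension over $\F\ls{X}$ equals the rank of $\F[X]\mathcal{C}$, which is $k$. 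By Lemma \ref{estrellamodulo}, $\mathcal{C}^*$ is again a right $A$-submodule of $\F^n\ls{X}$, and being an $\F\ls{X}$-subspace it is, by Lemma \ref{DSmodulo}, a right $A\ls{X;\sigma,\delta}$-submodule; since it is spanned by $\mathcal{C}\subseteq\F^n[X]$ it qualifies as a $(\sigma,\delta,A)$-cyclic convolutional code of rate $k/n$.

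Finally I would verify that the two constructions are mutually inverse. The composition $\mathcal{C}\mapsto\mathcal{C}^*\mapsto\mathcal{C}^*\cap\F^n[X]$ returns $\mathcal{C}$ by Lemma \ref{extmodulo}, using that $\mathcal{C}=\overline{\mathcal{C}}$. For the other composition, starting from a $(\sigma,\delta,A)$-cyclic code $\mathcal{D}$, I need $(\mathcal{D}\cap\F^n[X])^*=\mathcal{D}$; this is exactly the second assertion of Lemma \ref{cortamodulo} applied to the $\F\ls{X}$-vector subspace $\mathcal{D}$, which is legitimate since $\mathcal{D}$ is a convolutional code. I would also note in passing that rate is preserved in both directions, which is contained in the dimension/rank bookkeeping above.

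The step I expect to be the main obstacle is verifying that $\mathcal{D}\cap\F^n[X]$ is closed under the right action of $A[X;\sigma,\delta]$, i.e.\ that the polynomial module structure on $\F^n[X]$ is genuinely the restriction of the one on $\F^n\ls{X}$. The subtlety is that the formula \eqref{sx-ma} for the $A$-action involves the operators $N_i^j$ and the shift by negative powers of $X$, so one must confirm that when applied to a \emph{polynomial} vector and a \emph{polynomial} scalar the output has no negative-degree terms and agrees with the action coming from $M\otimes_A A[X;\sigma,\delta]\cong M[X]$ described at the start of Section \ref{modulos}. This is really where the nilpotency of $\delta$ and $\delta'$ and the compatibility of the products established in Theorems \ref{LaurentSeries2} and \ref{MSmodulo} are used; everything else is a repackaging of the $\delta=0$ arguments of \cite{Gomez/Sanchez:2024} via the lemmas above.
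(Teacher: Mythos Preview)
Your proposal is correct and follows essentially the same route as the paper: both arguments use Lemma \ref{cortamodulo} for the direct-summand property, Lemma \ref{extmodulo} for the rank computation and for identifying $\overline{B}$ with $B^*\cap\F^n[X]$, and Lemma \ref{estrellamodulo} to show the inverse map $\mathcal{C}\mapsto\mathcal{C}^*$ lands in right $A$-submodules. Your extra care about the compatibility of the polynomial and Laurent-series module structures on $\F^n[X]$ is warranted but is already built into the constructions of Section \ref{modulos} (Proposition \ref{productonilM} and Theorem \ref{MSmodulo}), which the paper invokes implicitly when it says ``one easily gets that $\mathcal{D}\cap\F^n[X]$ is an $A[X;\sigma,\delta]$-submodule.''
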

\begin{proof}
A \((\sigma,\delta,A)\)--cyclic convolutional code \(\mathcal{D}\) is a right \(A\ls{X;\sigma,\delta}\)--submodule of \(\F^n\ls{X}\). Since \(A[X;\sigma,\delta]\) is a subring of \(A\ls{X;\sigma,\delta}\), one easily gets that \(\mathcal{D} \cap \F^[X]\) is an \(A[X;\sigma,\delta]\)--submodule of \(\F^n[X]\). By Lemma \ref{cortamodulo}, \(\mathcal{D} \cap \F^n[X]\) is an \(\F[X]\)--direct summand of \(\F^n[X]\). Now, \(\mathcal{D}\) has an \(\F\ls{X}\)--basis \(B\) whose elements belong to \(\F^n[X]\). Observe that, according to Lemma \ref{extmodulo}, \[\mathcal{D} \cap \F^n[X] = \overline{B} = \overline{B\F[X]},\] 
and the dimension \(k\) of the \(\F\ls{X}\)--vector space \(B^* = \mathcal{D}\) equals to the rank of the \(\F[X]\)--module  \(B\F[X]\). Finally, this rank is, precisely, the rank of \(\overline{B\F[X]}\). 

By virtue of Lemma \ref{cortamodulo} and Lemma \ref{extmodulo},  the reciprocal of the map described above,  sends a right \(A[X;\sigma,\delta]\)--submodule \(\mathcal{C}\) of \(\F^n[X]\) that is an \(\F[X]\)--direct summand onto \(\mathcal{C}^*\). Note that \(\mathcal{C}^*\) is a right \(A\)--submodule of \(\F^n\ls{X}\) by Lemma \ref{estrellamodulo}.
\end{proof}

Recall from \cite{Lopez/Szabo:2013} that a right ideal code is defined as a right ideal of \(A[X;\sigma,\delta]\) that it is an \(\F[X]\)--direct summand. Consider the regular right structure on \(A\) and any \(\F\)--vector space isomorphism \(A \cong \F^n\), with \(n\) the dimension of \(A\) over \(\F\). Then \(\F^n\) becomes a right \(A\)--module isomorphic to \(A\).

\begin{corollary}\label{idealcodes}
The map \(\mathcal{D} \mapsto \mathcal{D} \cap A[X;\sigma,\delta]\) is a bijection between the set of all \((\sigma,\delta,A)\)--cyclic convolutional codes in \(A\ls{X;\sigma,\delta}\) and the set of all right ideal codes. 
\end{corollary}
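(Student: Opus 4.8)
The plan is to derive Corollary~\ref{idealcodes} as a direct specialization of Theorem~\ref{correspondencia} together with the identification of the regular right $A$-module with $\F^n$. First I would fix once and for all the $\F$-vector space isomorphism $A \cong \F^n$ mentioned before the statement; this transports the regular right $A$-module structure on $A$ to the right $A$-module structure on $\F^n$, and hence induces an $\F\ls{X}$-linear (indeed right $A\ls{X;\sigma,\delta}$-linear) isomorphism $A\ls{X} \cong \F^n\ls{X}$ restricting to an $\F[X]$-linear and right $A[X;\sigma,\delta]$-linear isomorphism $A[X] \cong \F^n[X]$. All the structures involved --- the $\F\ls{X}$-vector space structure, the right $A$-action of \eqref{sx-ma}, and the polynomial subring $A[X;\sigma,\delta]$ --- are defined coefficientwise or via $\sigma,\delta$, so they are preserved under this isomorphism; I would state this compatibility explicitly as the first step.

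The second step is to observe that, under this identification, a $(\sigma,\delta,A)$-cyclic convolutional code in $A\ls{X;\sigma,\delta}$ is exactly a right $A\ls{X;\sigma,\delta}$-submodule of $A\ls{X} \cong \F^n\ls{X}$ with a generating set in $A[X] \cong \F^n[X]$, i.e. precisely the objects counted by Theorem~\ref{correspondencia} (for the particular module $M = A$, $n = \dim_\F A$). The third step is to identify the image: Theorem~\ref{correspondencia} sends such a $\mathcal{D}$ to $\mathcal{D}\cap \F^n[X]$, which under the isomorphism corresponds to $\mathcal{D}\cap A[X;\sigma,\delta]$, and the theorem guarantees this is a right $A[X;\sigma,\delta]$-submodule of $A[X;\sigma,\delta]$ --- that is, a right ideal --- which is moreover an $\F[X]$-direct summand, hence a right ideal code in the sense of \cite{Lopez/Szabo:2013}. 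Conversely every right ideal code is a right $A[X;\sigma,\delta]$-submodule of $A[X;\sigma,\delta] \cong \F^n[X]$ that is an $\F[X]$-direct summand, so it lies in the codomain of the bijection of Theorem~\ref{correspondencia}; one should note here that the rate $k/n$ does not appear in the statement of the corollary because we are taking the union over all $k$, so the bijection is simply the disjoint union over $k = 0,\dots,n$ of the bijections of Theorem~\ref{correspondencia}.

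I expect the only genuinely substantive point --- and thus the main obstacle, though a mild one --- to be verifying that the chosen $\F$-linear isomorphism $A\ls{X}\cong\F^n\ls{X}$ intertwines the right $A$-module structure of \eqref{sx-ma} on $\F^n\ls{X}$ with the one obtained from Theorem~\ref{MSmodulo} applied to $M=A$ (the regular module), and likewise intertwines $A[X;\sigma,\delta]$ with the $A[X;\sigma,\delta]$-submodule structure on $A[X]\cong\F^n[X]$. Since Theorem~\ref{MSmodulo} is proved via the tensor-product identification $M\otimes_A A\ls{X;\sigma,\delta}\cong M\s{X}$ and its Laurent localization, and for $M=A$ the tensor product is canonically $A\ls{X;\sigma,\delta}$ itself, this compatibility is essentially a tautology once one unwinds \eqref{pimenosuno}; I would point this out rather than recompute. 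Everything else is a formal transport of the bijection in Theorem~\ref{correspondencia} along an isomorphism, so the proof is short.

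\begin{proof}
Fix the $\F$-vector space isomorphism $A \cong \F^n$ described above, which transports the regular right $A$-module structure on $A$ to the given right $A$-module structure on $\F^n$; here $n = \dim_\F A$. By Theorem~\ref{MSmodulo} applied to the finite-dimensional right $A$-module $M = A$, the canonical identification $A\ls{X;\sigma,\delta} = A\otimes_A A\ls{X;\sigma,\delta} \cong A\ls{X}$ endows $A\ls{X}$ with its right $A\ls{X;\sigma,\delta}$-module structure; unwinding \eqref{pimenosuno} for $M=A$ shows that the induced right $A$-action on $A\ls{X}$, after transport along $A\cong\F^n$, is precisely the action \eqref{sx-ma} on $\F^n\ls{X}$. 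The same identification carries $A[X;\sigma,\delta]$ onto the right $A[X;\sigma,\delta]$-submodule $\F^n[X]$ of $\F^n\ls{X}$, the subring $\F\ls{X}$ to itself, and hence the $\F\ls{X}$-vector space structure of $A\ls{X}$ to that of $\F^n\ls{X}$. Thus, under this isomorphism, right $A\ls{X;\sigma,\delta}$-submodules of $A\ls{X}$ with a generating set in $A[X;\sigma,\delta]$ correspond exactly to $(\sigma,\delta,A)$-cyclic convolutional codes, and right ideal codes of $A[X;\sigma,\delta]$ (in the sense of \cite{Lopez/Szabo:2013}) correspond exactly to right $A[X;\sigma,\delta]$-submodules of $\F^n[X]$ that are $\F[X]$-direct summands.

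Under these correspondences the map $\mathcal{D} \mapsto \mathcal{D}\cap A[X;\sigma,\delta]$ becomes the map $\mathcal{D}\mapsto \mathcal{D}\cap\F^n[X]$. By Theorem~\ref{correspondencia}, for each $k$ the latter restricts to a bijection between $(\sigma,\delta,A)$-cyclic convolutional codes of rate $k/n$ and right $A[X;\sigma,\delta]$-submodules of $\F^n[X]$ that are $\F[X]$-direct summands of rank $k$. Taking the disjoint union over $k = 0,\dots,n$, the map $\mathcal{D}\mapsto \mathcal{D}\cap A[X;\sigma,\delta]$ is a bijection between all $(\sigma,\delta,A)$-cyclic convolutional codes in $A\ls{X;\sigma,\delta}$ and all right ideal codes, as claimed.
\end{proof}
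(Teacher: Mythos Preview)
Your proof is correct and takes essentially the same approach as the paper: set up the identification $A\cong\F^n$ via the regular right $A$-module structure and then apply Theorem~\ref{correspondencia}. The paper's own proof is a single sentence invoking Lemma~\ref{DSmodulo} and Theorem~\ref{correspondencia}, leaving implicit the compatibility checks and the union over ranks~$k$ that you spell out explicitly.
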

\begin{proof}
 In view of Lemma \ref{DSmodulo}, we may apply Theorem \ref{correspondencia} to get the bijection described in the statement of the Corollary. 
\end{proof}

Let us describe some concrete examples where the former cyclic convolutional codes can be built.

\begin{example} \label{examplenil} Let \(\F_4\) be the field with eight elements described as \(\F_4=\F_2(a)\), where \(a^2+a+1=0\). Let \(\tau\) be the Frobenius automorphism on \(\F_{8}\), that is, \(\tau(c)=c^2\) for every \(c\in \F_4\).   Set \(A=\mathcal{M}_2(\F_4)\), the ring of \(2\times 2\) matrices over $\F_4$, and consider the $\F_2$-automorphism $\sigma: A \to A$ defined as the component-by-component extension of $\tau$ to $A$. That is, $\sigma$ is given by
\begin{equation}\label{automorphism}
\sigma\left ( \begin{matrix} x_0 & x_1 \\ x_2 & x_3 \end{matrix} \right ) = \left ( \begin{matrix} \tau(x_0) & \tau(x_1) \\ \tau(x_2) & \tau(x_3) \end{matrix} \right ) = \left ( \begin{matrix} x_0^2 & x_1^2 \\ x_2^2 & x_3^2 \end{matrix} \right ) \text{ for every } \left ( \begin{matrix} x_0 & x_1 \\ x_2 & x_3 \end{matrix} \right )\in A. 
\end{equation}
We can also set the inner $\sigma$-derivation $\delta:A\rightarrow A$ given by $\delta(X)=MX-\sigma(X)M$ for $X\in A$, where 
\[
M=\begin{pmatrix} 
0 & 1 \\
0 & 0
\end{pmatrix}.
\]

 For any $x_0,x_1,x_2,x_3 \in \F_4$,
 \[
\begin{split}
\delta\begin{pmatrix} 
x_0 & x_1 \\
x_2 & x_3 
\end{pmatrix} &= \begin{pmatrix} 
0 & 1 \\
0 & 0
\end{pmatrix} \begin{pmatrix} 
x_0 & x_1 \\
x_2 & x_3 
\end{pmatrix} + \begin{pmatrix} 
x_0^2 & x_1^2 \\
x_2^2 & x_3^2 
\end{pmatrix}\begin{pmatrix} 
0 & 1 \\
0 & 0
\end{pmatrix}\\ 
&= \begin{pmatrix} 
x_2& x_3 \\
0 & 0 
\end{pmatrix} + \begin{pmatrix} 
0 &  x_0^2 \\
0 & x_2^2 
\end{pmatrix}\\
&= \begin{pmatrix} 
x_2& x_0^2 +x_3  \\
0 & x_2^2 
\end{pmatrix}.
\end{split}
\]

\[
\begin{split}
\delta^2\begin{pmatrix} 
x_0 & x_1 \\
x_2 & x_3 
\end{pmatrix} &= \begin{pmatrix} 
0 & 1 \\
0 & 0
\end{pmatrix} \begin{pmatrix} 
x_2& x_0^2 +x_3  \\
0 & x_2^2 
\end{pmatrix} + \begin{pmatrix} 
x_2^2& x_0 +x_3^2  \\
0 & x_2 
\end{pmatrix}
\begin{pmatrix} 
0 & 1 \\
0 & 0
\end{pmatrix}\\ 
&= \begin{pmatrix} 
0& x_2^2  \\
0 & 0 
\end{pmatrix} + \begin{pmatrix} 
0 &  x_2^2\\
0 & 0
\end{pmatrix}\\
&= \begin{pmatrix} 
0& 2x_2^2  \\
0 & 0 
\end{pmatrix}= \begin{pmatrix} 
0& 0 \\
0 & 0
\end{pmatrix}.
\end{split}
\]

Then \(\delta\) is nilpotent. Notice that \(\sigma(M)=M\), so \(\delta \sigma = \sigma\delta\) and thus, \(\delta'\) is nilpotent. Hence \(A\ls{X;\sigma,\delta}\) exists.
\end{example}

\begin{example}{\cite[Example 5.1]{Lopez/Szabo:2013}} \label{example:lopez/szabo}
Let \(\F_4=\F_2(a)\) and \(A = \F G\) the group algebra of a cyclic group \(G\) of order \(5\). Define \(\sigma : G \to G\) via \(\sigma(a)=a^2\), which is a group automorphism on \(G\). By extending \(\sigma\) linearly over \(\F\), \(\sigma\) gives  an \(\F\)--algebra automorphism on \(A\). Now, let \(\delta\) be the inner \(\sigma\)-derivation induced by \(1\), i.e.,  \(\delta(a)=a+\sigma(a)\). Let us check that \(\delta\) is nilpotent. Indeed, for any \(a \in G\),
\[
\begin{array}{rcl}
\delta(a)&=& a+a^2 \\
\delta^2(a)&=& a+a^2+a^2+a^2 =  a+a^4 \\
\delta^3(a)&=& a+a^4 + a^2+a^8=   a + a^4 + a^2 + a^3\\
\delta^4(a)&=& a + a^2  + a^3  + a^4 +  a^2  + a^4  + a^6   + a^8  = 0.\\
\end{array}
\]
Since \(\sigma(1)= 1\), one gets that \(\sigma \delta = \delta \sigma\) and, thus, \(\delta'\) is also nilpotent. Therefore,  \(A\ls{X; \sigma, \delta}\) exists, and the codes described in  \cite[Example 5.1]{Lopez/Szabo:2013}  are \((\sigma,\delta,A)\)--CCCs, by Theorem \ref{correspondencia}.
\end{example}

The following example shows that, even when working with the same ring, the same automorphism \(\sigma\), and an inner \(\sigma\)-derivation, it does not follow that \(\delta\) and \(\delta'\) are nilpotent.

\begin{example} Consider the assumptions of Example \ref{examplenil}. Now  take \[
M=\begin{pmatrix} 
0 & 0 \\
0 & a
\end{pmatrix},
\]
then

\[
\begin{split}
\delta\begin{pmatrix} 
0 & 0 \\
0 & a 
\end{pmatrix} &= \begin{pmatrix} 
0 & 0 \\
0 & a
\end{pmatrix} \begin{pmatrix} 
0 & 0\\
0 & a 
\end{pmatrix} + \begin{pmatrix} 
0 & 0 \\
0 & a^2
\end{pmatrix}\begin{pmatrix} 
0 & 0 \\
0 & a
\end{pmatrix}\\ 
&= \begin{pmatrix} 
0& 0 \\
0 & a^2
\end{pmatrix} + \begin{pmatrix} 
0 &  0 \\
0 & 1
\end{pmatrix}\\
&= \begin{pmatrix} 
0& 0  \\
0 & a
\end{pmatrix}.
\end{split}
\]
and, since \(\sigma^{-1}=\sigma\), we have that

\[
\begin{split}
\delta'\begin{pmatrix} 
0 & 0 \\
0 & a
\end{pmatrix} &=  \begin{pmatrix} 
0& 0  \\
0 & a 
\end{pmatrix}.
\end{split}
\]

Then neither \(\delta\) nor \(\delta'\) is nilpotent. 

\end{example}

Our final example shows that the nilpotence of \(\delta\) needs not imply that of \(\delta'\). 

\begin{example} Let \(A=\F[Y,Z]/I\) where  \(\F\) is a field and \(I\) is the ideal of the commutative polynomial ring \(\F[Y,Z]\) generated by \(\{Y^2, Z^2, YZ\}\). Set \(y = Y + I, z = Z+I\) and \(\sigma\) the \(\F\)--algebra automorphism of \(A\) defined by \(\sigma(1)=1, \sigma(y)=y+z\) and \(\sigma(z)=z\).
 Let us define the \(\F\)--linear map \(\delta\) as \(\delta(1)=0, \delta(y)=0\) and \(\delta(z)=y\).  Let us verify  that \(\delta\) is a \(\sigma\)-derivation:
\[
\begin{array}{l}
\sigma(y)\delta(y)+ \delta(y)y = (y+z)0+0y=0=\delta(y^2); \\
\sigma(z)\delta(z)+ \delta(z)z = zy+yz=0=\delta(z^2); \\
\sigma(y)\delta(z)+ \delta(y)z = (y+z)y +0y=0=\delta(yz); \\
\sigma(z)\delta(y)+ \delta(z)y = y0 +yy=0=\delta(zy), \\
\end{array}
\] 
as we required. Clearly,  \(\delta^2 =0\).  

Now, 
\[\delta'(y)= -\delta(\sigma^{-1}(y))=-\delta(y-z)= y.\]
Hence \(\delta'\) is not nilpotent. 
\end{example}

\section{Comments from the polynomial-based perspective.}\label{cabras}
We have addressed a specific conceptual question: given a cyclic structure on a convolutional code with polynomial coefficients, does it extend to (or arise from) a cyclic structure on the corresponding Laurent series model? 
We formulate these structures in terms of a left skew derivation \((\sigma,\delta)\) on a finite-dimensional algebra \(A\) over a field \(\F\).  While these approaches are strictly equivalent when \(\sigma\) is an automorphism and \(\delta = 0\) (see \cite{Gomez/Sanchez:2024}), Theorem \ref{correspondencia} shows that the answer remains positive when $\delta \neq 0$, provided that a suitable ring structure can be defined on the vector space of Laurent series extending the skew polynomial ring \(A[X;\sigma,\delta]\). The existence of such a ring structure, denoted by \(A\ls{X;\sigma,\delta}\), is discussed in detail in Section \ref{skewlaurentseries} and is of independent interest from a purely algebraic perspective.

Convolutional codes are widely used and remain the subject of extensive ongoing research (\cite{Alfarano/alt:2023,Climent/alt:2025,Gassner/alt:2024,Lieb/alt:2025,Pan/alt:2025}). These works typically adopt the polynomial-based approach. Notably, some of these codes are used to design crypto-systems, as seen in \cite{Gassner/alt:2024, Lieb/alt:2025}. Specifically,  \cite{Gassner/alt:2024} employs quasi-cyclic convolutional codes that, when non-catastrophic, fall within of cyclic convolutional codes defined in Section \ref{biyeccion}. In light of the above, Definition \ref{CCC} should be complemented with the following one.

\begin{definition}
Let \((\sigma,\delta)\) be a skew left derivation of an \(\F\)--algebra \(A\). Given a right \(A\)--module structure on \(\F^n\), endow \(\F^n[X]\) with a structure of right \(A[X;\sigma,\delta]\)--module according to the rule \eqref{AModuloDerecha}. A  \emph{polynomial \((\sigma,\delta,A)\)--cyclic convolutional code} is a right \(A[X;\sigma,\delta]\)--submodule of \(\F^n[X]\). 
\end{definition}

Convolutional quasi-cyclic codes from \cite{Gassner/alt:2024} are \((\mathrm{id},0,\F[t]/(t^r-1))\)--cyclic codes, where \(id\) denotes the identity map. By further specializing parameters to  \((id, 0, \F)\), we recover the standard notion of a convolutional code found in recent literature. However, it is often assumed that the code is an \(\F[X]\)--direct summand of \(\F^n[X]\). 

In this context, Theorem \ref{correspondencia} establishes that 
\((\sigma,\delta,A)\)-cyclic convolutional codes are in one-to-one correspondence to polynomial 
\((\sigma,\delta,A)\)-cyclic convolutional codes that are 
\(\mathbb{F}[X]\)-direct summands, provided that \(\sigma\) is an automorphism 
of a finite-dimensional \(\mathbb{F}\)-algebra \(A\) and both \(\delta\) and 
\(\delta' = -\delta \sigma^{-1}\) are nilpotent. This correspondence enables us to demonstrate how the general cyclic structure is reflected in the minimal encoder, in the spirit of \cite{Roos:1979}. We formulate this idea more precisely below.

Consider a convolutional code \(\mathcal{D} \subseteq \mathbb{F}^n[X]\). Following the framework established by \cite{Roos:1979}, given an encoder \(G\) defined by an \(\mathbb{F}[X]\)--basis \(\{g_1, \dots, g_k\} \subseteq \mathbb{F}^n[X]\) of \(\mathcal{D}\), we define its \emph{complexity} as
\[
    c(G) = \sum_{i = 1}^k \deg(g_i),
\]
where \(\deg(g_i)\) denotes the polynomial degree of \(g_i\). We call an encoder \emph{minimal} if it minimizes this complexity. Set \(\mathcal{C} = \mathcal{D} \cap \F^n[X]\). For any non-negative integer \(m\), define
\[
\mathcal{C}_m = \{ f \in \mathcal{C} : \deg(f) \leq m \}. 
\]
These \(\mathbb{F}\)-vector subspaces \(\mathcal{C}_m\) form a filtration of 
\(\mathcal{C}\), with \(\mathcal{C} = \bigcup_{m \geq 0} \mathcal{C}_m\). 
Consequently, for sufficiently large \(m\), we have 
\(k = \operatorname{rank}(\mathcal{C}) = \operatorname{rank}(\mathcal{C}_m \mathbb{F}[X])\), where \(\mathcal{C}_m \mathbb{F}[X]\) denotes the \(\mathbb{F}[X]\)-submodule generated by the subspace \(\mathcal{C}_m\). Following \cite{Roos:1979}, the \emph{span} of $\mathcal{D}$  is defined as
 \(\mu(\mathcal{D}) = \min \{ m : \mathcal{C}_m^* = \mathcal{D} \}.\) Alternatively, by \cite[Proposition 3]{Gomez/Sanchez:2024}, 
 \[ \mu(\mathcal{D}) = \min \{ m : \operatorname{rank}(\mathcal{C}_m \mathbb{F}[X]) = k \}.
 \]  
  Assume that \(\mathcal{D}\) 
is \((\sigma,\delta,A)\)-cyclic; then, \(\mathcal{C} = \mathcal{D} \cap \mathbb{F}^n[X] \) inherits the structure of a 
right \(A[X;\sigma,\delta]\)-submodule of \(\mathbb{F}^n[X]\). 
Proceeding as in \cite{Gomez/Sanchez:2024}, let \(\mathcal{C}_{-1} = \{0\}\). 
Then, for any \(m \in \mathbb{N}\), we have the inclusion
\[
    \mathcal{C}_{m-1} + \mathcal{C}_{m-1}X \subseteq \mathcal{C}_m.
\]
Observe that, as a consequence of \eqref{ga}, each of these \(\mathbb{F}\)-vector subspaces is a right 
\(A\)-submodule of \(\mathcal{C}\). If \(A\) is a semisimple algebra, 
there exists an \(A\)-submodule \(\mathcal{B}_m \subseteq \mathcal{C}_m\) such that
\begin{equation}\label{mdesc}
    (\mathcal{C}_{m-1} + \mathcal{C}_{m-1}X) \oplus \mathcal{B}_m = \mathcal{C}_m.
\end{equation}
 
 The following theorem extends \cite[Theorem 3]{Gomez/Sanchez:2024} to the case of 
skew derivations \((\sigma,\delta)\) with \(\delta\) not necessarily \(0\), assuming \(A\) is 
semisimple. The proof proceeds analogously to that of \cite[Theorem 3]{Gomez/Sanchez:2024} 
and is therefore omitted. A key modification consists of replacing 
\cite[Lemma 4]{Gomez/Sanchez:2024} with Lemma \ref{estrellamodulo}.

\begin{theorem}\label{directsumR}
    Assume that \(A\) is a semisimple \(\mathbb{F}\)-algebra, \(\sigma\) is an 
    automorphism of \(A\), and both \(\delta\) and \(\delta'\) are nilpotent. 
    Let \(\Gamma_m\) be an \(\mathbb{F}\)-basis of \(\mathcal{B}_m\) for every 
    \(m \geq 0\). Then, the union \(\Gamma_0 \cup \cdots \cup \Gamma_{\mu(\mathcal{D})}\) 
    forms a minimal encoder of \(\mathcal{D}\) and is an \(\mathbb{F}[X]\)-basis 
    of \(\mathcal{C} = \mathcal{D} \cap \mathbb{F}^n[X]\). Moreover,
    \[
        \mathcal{C} = \Gamma_0 \mathbb{F}[X] \oplus \cdots \oplus \Gamma_{\mu(\mathcal{D})}\mathbb{F}[X]
    \]
    is a direct sum decomposition of \(\mathcal{C}\) into \(A[X;\sigma,\delta]\)-submodules.
\end{theorem}

The following consequence of Theorem \ref{directsumR} is a general version of \cite[Theorem 4]{Roos:1979}. 

\begin{corollary}\label{alaRoos}
    The \((\sigma,\delta,A)\)-cyclic convolutional code \(\mathcal{D}\) admits a decomposition
    \[
        \mathcal{D} = \Gamma_0^* \oplus \cdots \oplus \Gamma_{\mu(\mathcal{D})}^*
    \]
    into a direct sum of \((\sigma, \delta, A)\)-cyclic convolutional subcodes.
\end{corollary}

The following example identifies what is arguably the simplest class of cyclic convolutional codes not covered by \cite{Gomez/Sanchez:2024} to which the theory developed in this paper applies.

\begin{example}\label{nilderivation}
Let \(A = M_r(\F)\) be the simple finite-dimensional \(\F\)-algebra of all square matrices of order \(r\) with coefficients in \(\F\). For any nilpotent matrix \(\alpha \in A\), the inner derivation \(\delta_\alpha : A \to A\), defined by \(\delta_\alpha(a) = \alpha a - a \alpha\) for all \(a \in A\), is nilpotent. With \(\sigma = id\), we may define the skew polynomial ring \(A[X;id,\delta_\alpha]\), denoted by \(A[X;\delta_\alpha]\) for simplicity. Furthermore, since  \(\delta_\alpha' = - \delta_\alpha\) is also nilpotent, the ring \(A\ls{X;\delta_\alpha}\) exists. Thus, \((id,\delta_\alpha, M_r(\F))\)-cyclic convolutional codes have a mathematically sound definition. Theorems \ref{correspondencia} and \ref{directsumR} and Corollary \ref{alaRoos} are applicable to these cyclic convolutional codes.
\end{example}

Theorem \ref{correspondencia} paves the way for proving that, under suitable 
assumptions, every \((\sigma,\delta,A)\)-cyclic convolutional code is generated 
by an idempotent skew polynomial. We focus on the case where \(\mathbb{F}^n\) is isomorphic to 
\(A\) as a right \(A\)-module. In this setting, \((\sigma,\delta,A)\)-cyclic 
convolutional codes correspond to the right ideals of \(A\ls{X;\sigma,\delta}\) 
that admit a generating set within \(A[X;\sigma,\delta]\). Corollary 
\ref{idealcodes} establishes that each of these codes \(\mathcal{D}\) is 
uniquely determined by the right ideal code
\(\mathcal{C} = \mathcal{D} \cap A[X;\sigma,\delta]\). When \(A\) is a separable algebra---which is always the case for a semisimple 
algebra over a finite field---the existence of an idempotent generator for 
\(\mathcal{C}\) is addressed in \cite{Gomez/alt:2017}. 

More precisely, let \(A \otimes A\) denote the tensor product of two copies of \(A\) over 
\(\mathbb{F}\), and define the maps \(\sigma^\otimes = \sigma \otimes \sigma\) 
and \(\delta^\otimes = \delta \otimes \mathrm{id} + \sigma \otimes \delta\). If there exists a 
separability idempotent \(e \in A \otimes A\) such that \(\sigma^\otimes (e) = e\) 
and \(\delta^\otimes (e)= 0\), then, by \cite[Theorems 6 and 8]{Gomez/alt:2017}, 
\(\mathcal{C}\) is generated by an idempotent \(\epsilon \in A[X;\sigma,\delta]\). 
Since \(\mathcal{D} = \mathcal{C}^*\) by Lemma \ref{cortamodulo}, it follows 
that \(\mathcal{D} = \epsilon A\ls{X;\sigma,\delta}\). Moreover, such an 
idempotent \(\epsilon\) can be computed using the method described in 
\cite[Section V]{Gomez/alt:2017}. Furthermore, a suitable separability element 
\(e \in A \otimes A\) (if it exists) can be found via the algorithm provided 
in \cite{Gomez/alt:2017b}. Finally, information regarding the free distance 
of \(\mathcal{C}\)---and consequently of \(\mathcal{D}\)---may be deduced from 
the generating idempotent \(\epsilon\) (see \cite{Gomez/alt:2021}).


\begin{thebibliography}{}
\bibitem{Alfarano/alt:2023}
G. N. Alfarano, D. Napp, A. Neri, and V. Requena. \emph{Weighted Reed–Solomon convolutional codes}, Linear Multilinear Algebra, \textbf{72(5)} (2023) 84--874.
\bibitem{Bergen/alt:2011}
J. Bergen and P. Grzeszczuk. \emph{Skew power series rings of derivation type.} J.
Algebra Appl. \textbf{10(6)} (2011), 1383--1399.
\bibitem{Climent/alt:2025} 
J. J. Climent,  D. Napp, and V. Requena, \emph{An algorithm to compute a minimal input-state-output representation of a convolutional code}, Linear Algebra Appl.,  \textbf{721} (2025), 715-735.
\bibitem{Cohn:1995} P. M. Cohn. \emph{Skew Fields: Theory of General Division Rings}, Encyclopedia of Mathematics and its Applications 57, Cambridge University Press, Cambridge, 1995.
\bibitem{Gassner/alt:2024}
N. Gassner, A. Mazumder, J. Rosenthal, and A. Sutton. \emph{An Approach to Constructing Convolutional Codes With Moderate Density and Quasi-Cyclic Structure}. IFAC-PapersOnLine, \textbf{58} (2024), 304--309.
\bibitem{Gluesing/Schmale:2004}
 H. Gluesing-Luerssen and W. Schmale. \emph{On cyclic convolutional codes.}
Acta Appl. Math., \textbf{82} (2004), 183--237.
\bibitem{Gomez/alt:2017}  J. G\'omez-Torrecillas, F.J. Lobillo and G. Navarro. \emph{Ideal codes over separable ring extensions}, IEEE Trans. Inform. Theory \textbf{63} (2017), 2796--2813. 
\bibitem{Gomez/alt:2017b} J. G\'omez-Torrecillas, F.J. Lobillo and G. Navarro. \emph{Computing separability elements for thesentence-ambient algebra of split ideal codes}, J. Symbolic Comput. \textbf{83} (2017), 211--227.
\bibitem{Gomez/alt:2021}  J. G\'omez-Torrecillas, F.J. Lobillo and G. Navarro. \emph{Cyclic distances of idempotent convolutional codes}, J. Symbolic Comput. \textbf{102} (2021), 37--62.
\bibitem{Gomez/Sanchez:2024} J. G\'omez-Torrecillas and J.P. S\'anchez-Hern\'andez. \emph{Minimal encoders of convolutional codes with general cyclic structures}, J. Algebra Appl. \textbf{23(10)} (2024) 2450164 (13 pages). 
\bibitem{Greenfeld/alt:2019}
B. Greenfeld, A. Smoktunowicz, and M. Ziembowskil. \emph{Five solved problems on radicals
of ore extensions.} Publ. Mat. \textbf{63} (2019), 423--444.
\bibitem{Lieb/alt:2025}
J. Lieb, R. Pinto and C. Vela,  
 \emph{A new method for erasure decoding of convolutional codes}. Des. Codes Cryptogr., (2025), 1-22.
\bibitem{Lopez/Szabo:2013}
S. R. L\'opez-Permouth and S. Szabo. 
 \emph{Convolutional codes with additional algebraic structure.}
 J. Pure Appl. Algebra, \textbf{217} (2013), 958--972.
 \bibitem{Pan/alt:2025}
 X. Pan, H. Chen, C. Tang and X. Chen,  \emph{New Bounds for Generalized Column Distances and Construction of Convolutional Codes}, IEEE Trans. Inform. Theory,  \textbf{71(4)}, (2025), 2576--2590
\bibitem{Piret:1976}
P. Piret. \emph{Structure and constructions of cyclic convolutional codes}, IEEE Trans. Inform. Theory, \textbf{22} (1976), 147--155
\bibitem{Roos:1979}
C. Roos. \emph{On the Structure of Convolutional and Cyclic Convolutional Codes}, IEEE Trans. Inform. Theory, \textbf{25} (1979), 676--683. 
\bibitem{Schneider/Venjakob:2005}
P. Schneider and O. Venjakob, \emph{On the codimension of modules over skew power series rings with applications to Iwasawa algebras.}  
J. Pure Appl. Algebra \textbf{204} (2006), no. 2, 349--367.
\bibitem{Schneider/Venjakob:2010}
P. Schneider and O. Venjakob, \emph{Localizations and completions of skew power series rings.}
American J. Math. \textbf{132} (2010), 1--36.
\bibitem{Stenstrom:1975}
B. Stenstr{\"o}m. Rings of Quotients, Springer, 1975.
\end{thebibliography}
\end{document}